\newcommand\C{\mathbb{C}}
\newcommand\R{\mathbb{R}}
\newcommand\U{\mathbb{D}}
\newcommand\V{\mathcal{V}}
\renewcommand\d{\partial}
\newcommand\dbar{\overline{\partial}}
\newcommand\Bergman{\mathcal{A}_{\alpha}^2}
\newcommand\Bergmaninf{\mathcal{A}_{\alpha}^\infty}
\newcommand\N{\mathbb{N}}
\DeclareMathOperator{\Hol}{Hol}
\newcommand{\dst}{\displaystyle}
\newcommand{\dist}{\operatorname{dist}}
\newtheorem{theorem}{Theorem}[section]
\newtheorem*{theorem*}{Theorem} 
\newtheorem{lemma}[theorem]{Lemma}
\newtheorem{definition}[theorem]{Definition}
\begin{document}
\title{Multiple Sampling and Interpolation in Bergman Spaces}

\author[D. Aadi]{D. Aadi}
\address{D. Aadi, Mohammed V University in Rabat, Faculty of sciences, CeReMAR -LAMA- B.P. 1014 Rabat, Morocco}
\email{driss\_aadi@um5.ac.ma}

\author[C. Cruz]{C.  Cruz}
\address{Universitat de Barcelona, Departament de Matem\`atiques i Inform\`atica, Gran Via 585, 08007-Barcelona, Spain}
\email{ccruz@ub.edu}

\author[A. Hartmann]{A. Hartmann}
\address{A. Hartmann, Univ. Bordeaux, CNRS, Bordeaux INP, IMB, UMR  
5251,  F-33400, Talence, France}
\email{Andreas.Hartmann@math.u-bordeaux.fr}

\author[K. Kellay]{K. Kellay}
\address{K. Kellay, Univ. Bordeaux, CNRS, Bordeaux INP, IMB, UMR 5251,  
  F-33400, Talence, France}
\email{kkellay@math.u-bordeaux.fr}

\subjclass[2010]{Primary 30J99, 30H20; Secondary 46E22, 47B32}
\keywords{Bergman space, multiple interpolation, multiple sampling, uniqueness set,   zero divisors}

\thanks{The second  author is supported by the APIF project. The research of the third and fourth authors is partly supported by the ANR-18-CE40-0035 project 
}

\maketitle

\begin{abstract}
We study multiple sampling and interpolation problems with unbounded multiplicities in the weighted Bergman space, 
both in the hilbertian case $p=2$ and the uniform case
$p=+\infty$. 
\end{abstract}
\section{Introduction}\label{section1}

Interpolating and sampling sequences have been studied in a broad variety of settings. 
We refer to the books \cite{Seip,HKZ00,DS04} for an account on these problems.
The particular situation of the Bergman space had been completely solved by Seip in \cite{Sei93} 
using density conditions, and in more general Hilbert spaces of analytic functions by Berndtsson and Ortega-Cerd\`a in \cite{BO95}. Subsequently, the case of multiple interpolation { {(but not
sampling)}} with uniformly
bounded multiplicites had been studied for instance
by Krosky and Schuster \cite{KS} using { also} extremal functions (we mention related work by the third named author who considered in \cite{Hart01} finite unions of Bergman interpolating sequences based on extremal functions, 
the case of multiple interpolation being in a sense a limite case of finite unions).
In view of his density characterizations, Seip's results imply that there are no simultaneous 
sampling and interpolating sequences in the Bergman spaces.

In the Fock space, besides considering the case of simple interpolation and sampling problems, 
Seip -- in particular with Brekke -- was interested in the situation of higher multiplicities.
Again, the density conditions obtained by these
authors imply that there are no simultaneous sampling and interpolating sequences, neither in
the simple case nor in the multiple case. Brekke and Seip in \cite{BS}  also asked whether there could be
simultaneous sampling and interpolating sequences when the uniform boundedness condition on
the multiplicities is relaxed. In \cite{BHKM} it was shown that at least when the multiplicities
tend to infinity, this is not possible (see also \cite{AAR} for the case of bounded multiplicites in the weighted Fock space).

One difficulty occuring in the case of unbounded multiplicities is the lack of a reasonable
definition of densities. In \cite{BHKM}, the authors introduce covering and separation conditions
related with critical radii suitably related with the multiplicities to circumvent densities. 
Though those conditions do not
characterize multiple interpolation and sampling, they get in a sense closer and closer to a 
characterization when the multiplicities grow (indeed the difference between necessary and 
sufficient conditions of the radii remains bounded while the radii tend to infinity).

Bergman and Fock spaces share many properties, and techniques often translate
from one setting to the other. The aim of this paper is to study the situation 
concerning multiple interpolating and sampling with unbounded
multiplicities in the Bergman space. New difficulties and challenges appear in order to
adapt the situation from the underlying euclidean metric in the Fock space to 
pseudohyperbolic metric in the Bergman space. While this might be rather direct for
simple interpolation and sampling the situation requires a quite delicate analysis of
the criticial radii in the pseudohyperbolic metric in particular when the multiplicities
are not uniformly bounded. On the technical side, replacing the incomplete $\Gamma$-function
by the incomplete $\beta$-function gives rise to other difficulties.

It is mentionable that generalized interpolation problems (but not sampling problems)
have been considered long ago in the Hardy space for which a complete answer is given
by the so-called generalized Carleson condition (see \cite{Nik, V}). In this situation, the
case of interpolating sequences with unbounded multiplicities is completely understood
(see also earlier work by Vinogradov-Rukshin \cite{VR}). 

Without claiming exaustivity, we finish this first tour on multiple interpolation problems, mentioning work
on interpolating sequences with uniformly bounded multiplicity in the Korenblum space, see \cite{Mas99} and for weighted spaces of entire functions see \cite{Oun07a,Oun07b}. \\

We now introduce the necessary notation. Let  $\alpha>-1$, we consider the $L^2$ weighted Bergman space 
\begin{equation*}
    \mathcal{A}^2_\alpha=\left\{f\in \Hol
(\U):\|f\|^2_{\alpha,2}=\int_\U|f(z)|^2dA_\alpha(z)<+\infty\right\},
\end{equation*}
where $dA_\alpha(z)=(1+\alpha)(1-|z|^2)^{\alpha}dxdy/\pi$, $z=x+iy$. The space $\mathcal{A}_\alpha^2$ is a reproducing kernel Hilbert space with the scalar product
\begin{equation*}
    \langle f,g\rangle:=\int_\U f(z)\overline{g(z)}dA_\alpha(z).
\end{equation*}

The standard monomial orthonormal basis for $\Bergman$ is given by
\begin{equation}\label{lemmaorthonormal}
    e_n(z)=\sqrt{\frac{\Gamma(n+2+\alpha)}{n!\Gamma(2+\alpha)}}z^n,\qquad n\geq0,
\end{equation}
where $\Gamma(s)$ stands for the usual Gamma function 
 \cite[p.4]{HKZ00}. Thus, see for instance \cite[p.5]{HKZ00} the  reproducing kernel  $\Bergman$ is  
\begin{equation*}
    K_w(z)=\sum_{j\geq 0}\overline{e_j(w)}e_j(z)=\frac{1}{(1-\overline{w}z)^{\alpha+2}},
\end{equation*}
and the normalized Bergman kernel is    $k_w(z)=K_w(z)/\|K_w\|_{\alpha,2}$. 
The reproducing kernel gives rise in a standard way to a growth condition in the Bergman space:
\begin{equation}\label{Croissance}
 |f(\lambda)|^2=|\langle f, K_{\lambda}\rangle|^2\le \left(\frac{1}{1-|\lambda|^2}
 \right)^{\alpha+2} \|f\|_{\alpha,2}^2, \quad f\in \Bergman, \lambda\in \U.
\end{equation}

We consider the M\"obius transform 
\begin{equation}\label{Mobius}
 \varphi_\lambda(z)=\frac{\lambda-z}{1-\overline{\lambda}z}
\end{equation}
and the {{isometric}} translation operators in $\mathcal{A}^2_{\alpha}$ given by 
$T_\lambda f(z)=\left[\varphi_\lambda'(z)\right]^\frac{2+\alpha}{2}f(\varphi_\lambda(z))$, 
i.e.,
\begin{align*}
    T_\lambda : \  \mathcal{A}^2_\alpha&\longrightarrow\mathcal{A}^2_\alpha\\
    f&\longrightarrow T_\lambda f:=\left[\frac{|\lambda|^2-1}{(1-\overline{\lambda} \cdot)^2}\right]^\frac{2+\alpha}{2}f(\varphi_\lambda(\cdot)).
\end{align*}
Notice that $\varphi_\lambda$ and $T_\lambda$ are involutions, in fact, $T_\lambda$ is a self-adjoint operator.\\

In the Bergman space, the underlying metric on the unit disk is the pseudohyperbolic
distance which is defined {\it via} the already mentioned M\"obius transform \eqref{Mobius}:
\[
 \rho(u,v)=|\varphi_u(v)|,\quad u,v\in \U.
\]
We also associate the pseudohyperbolic disk with this distance:
$D(\lambda,r)=\{z\in\U:\rho(\lambda,z)<r\}$ for $\lambda\in \U$ and $0<r<1$.
In order to be interpolating in the Bergman space, a sequence has to be separated
in this metric (see \cite{Sei93}). Also, in order to be sampling (at least in the Hilbertian case
under consideration here), it can be deduced that
any pseudohyperbolic neighborhood with fixed radius can contain at most a uniformly
bounded number of points (Carleson measure condition).
\\

In order to better understand multiple interpolation and sampling problems
we shall comment a little bit more on the multiplicity one situation. 
In this case, 
given a set of points $\Lambda\subset\U$,
one is interested in the values
of a function in given points $f(\lambda)$, $\lambda\in \Lambda$. When $\Lambda$
is separated in the pseudohyperbolic metric, it can be shown that for
$f\in \mathcal{A}^2_{\alpha}$, we have
\begin{equation}\label{SimpleInt}
 \sum_{\lambda\in\Lambda}\frac{|f(\lambda)|^2}{\|K_\lambda\|^2} 
= \sum_{\lambda\in\Lambda}(1-|\lambda|^2)^{2+\alpha}|f(\lambda)|^2
<\infty.
\end{equation}
Conversely, if the sequence $\Lambda$ is sufficiently separated, if can be shown that every
sequence of values $(v_{\lambda})_{\lambda\in\Lambda}$ the square of which is summable
against the weight $(1-|\lambda|^2)^{2+\alpha}$ can be interpolated by a function
$f\in \mathcal{A}^2_{\alpha}$.
Observe that 
\[
 \langle f,T_{\lambda}e_0\rangle=
 T_{\lambda}f(0)=\left[\frac{|\lambda|^2-1}{(1-\overline{\lambda}\times 0)^2}\right]^\frac{2+\alpha}{2}f(\varphi_\lambda(0))=(|\lambda|^2-1)^\frac{2+\alpha}{2}f(\lambda),
\]
so that \eqref{SimpleInt} translates to
\[
 \sum_{\lambda\in\Lambda}|\langle f,T_{\lambda}e_0\rangle|^2<+\infty.
\]

Notice that, intuitively, a sequence of interpolation must be sufficiently sparse, and should be a set of zeros of a holomorphic function (at least up to one point). Analogously, any sampling sequence should have sufficiently big density and is in general not a set of zeros (except in spaces which admit
complete interpolating sequences). This naive relation points at the connection between our problems and uniqueness questions which will be studied in the next Section \ref{setionzeros}.
\\

Let us now switch to the multiple case.
Instead of studying only the values of a function in the points of the
given sequence, it is natural to consider germs of functions in those points, i.e.\ to consider
also derivatives of the function up to a certain order depending on the point (Hermite type
interpolation).
As long as the multiplicites are uniformly bounded, the definition of the target space
can be based on point evaluations and their derivatives with suitable weights. However,
when we allow multiplicities to grow to infinity, the weights and constants have to 
be chosen in a very precise way. 
In the Hilbertian case, since $(e_j)_{j\ge 0}$ is an orthonormal basis and $T_{\lambda}$
an isometry, it is natural to consider $\langle f,T_{\lambda}e_j\rangle$, where $\lambda\in
\Lambda$ and $j$ is bounded by the multiplicity.
With this in mind, we can now define sampling and interpolation in the general case. 	
Given a set of points $\Lambda\subset\U$ with multiplicity $m_\lambda$, we call \textit{divisor} a set of pairs $X=\{(\lambda,m_\lambda) \}_{\lambda\in\Lambda}$. 
\begin{definition}
A divisor $X$ is sampling for $\Bergman$ if there exists a constant $C>0$, such that for all $f\in\Bergman$
\begin{equation*}
\frac{1}{C}\sum_{\lambda\in\Lambda}\sum_{j<m_\lambda}\left|\langle f,T_\lambda e_j\rangle\right|^2\leq\|f\|_{\alpha,2}^2\leq C\sum_{\lambda\in\Lambda}\sum_{j<m_\lambda}\left|\langle f,T_\lambda e_j\rangle\right|^2.
\end{equation*}
\end{definition}

Note that $P_{\lambda}f=\sum_{j<m_{\lambda}}\langle f,T_{\lambda}e_j\rangle
T_{\lambda}e_j$ is an orthogonal projection and it can be shown that
\begin{equation}\label{p5*}
\ker P_{\lambda}=N_{\lambda,m_\lambda}^{2,\alpha}:=\{f\in\Bergman: f^{(j)}(\lambda)=0, \quad\forall j<m_\lambda\},
\end{equation}
(see also equation (4) in \cite[p.114]{BS} where this matter is discussed in the Fock space),
so that in particular
\[
 \sum_{j<m_\lambda}\left|\langle f,T_\lambda e_j\rangle\right|^2
 =\|f\|^2_{\mathcal{A}^2_{\alpha}/N_{\lambda,m_{\lambda}}^{2,\alpha}}.
\] 

It is usefull to recall the weaker notion of uniqueness. A divisor $X$ is called a {\it uniqueness divisor}
(or simply $X$ is uniqueness) if every function vanishing up to the order $m_{\lambda}-1$ is
$\lambda$ is necessarily the zero function. Clearly, a sampling divisor is uniqueness, but the
converse is in general not true.
\\

The above definition gives rise to a natural definition of the following target space
needed for interpolation.
\begin{equation*}
\ell^2(X)=\left\{(v_\lambda^j)_{\lambda\in\Lambda,\ 0\leq  j<m_\lambda}: \|v\|^2_2:= \sum_{\lambda\in\Lambda}\sum_{j<m_\lambda}|v_\lambda^j|^2<\infty\right\}.
\end{equation*}
\begin{definition}
The divisor $X$ is interpolating for $\Bergman$ if for all sequences $v\in \ell^2(X)$, there exists $f\in\Bergman$ such that
\begin{equation*}
\langle f, T_\lambda e_j\rangle=v_\lambda^j.\qquad (\lambda\in \Lambda,j<m_\lambda)
\end{equation*}
\end{definition}

Note again that the above interpolation condition is equivalent to interpolation by germs of 
$f$ in $\lambda$ up to the order $m_{\lambda}-1$ (see equation (4) in 
\cite[p.114]{BS}). Clearly, if $f$ interpolates $v$ in the above way, then
$\sum_{j<m_\lambda}|v_\lambda^j|^2=\|f\|^2_{\mathcal{A}^2_{\alpha}/N_{\lambda,m_{\lambda}}^{2,\alpha}}$. The reinterpretation in terms of quotient norms
will be useful later when considering the situation in $\mathcal{A}^{\infty}_{\alpha}$.\\

In the case of the classical Fock space, multiple interpolation and sampling was related to
some critical radius. More precisely, since in the Fock space the underlying metric is
euclidean, given a multiplicity $m_{\lambda}$ in a point $\lambda\in\C$ the ``influence
zone'' of $\lambda$ meaning
the knowledge of $f(\lambda)$,...,$f^{m_{\lambda}-1}(\lambda)$ --- or
equivalently that of $\langle f,T_{\lambda}e_0\rangle$,...,$\langle f,T_{\lambda}e_{m_{\lambda}}
\rangle$ --- was the euclidean disk $D_e(\lambda,\sqrt{m_{\lambda}})=
\{z\in\C:|z-\lambda|<\sqrt{m_{\lambda}}\}$ (see \cite{BHKM}). This corresponds more
or less to redistributing the multiplicity in a regular way in an euclidean disk.
It is a priori not so clear how to define this redistribution in the pseudohyperbolic case in particular
when we authorize the multiplicity to tend to infinity. The corresponding critical radius 
appears in the following overlap condition; it will be clear from later discussions where this radius 
comes from. Our sampling and interpolating conditions are all expressed with respect to this
critical value (slightly increasing or decreasing it). We will need an overlap condition that 
we introduce now.

\begin{definition}\label{FiniteOvl}
A divisor $X$ satisfies the finite overlap condition for $\Bergman$ if 
\begin{equation*}
    S_X=\sup_{z\in\U}\sum_{\lambda\in\Lambda}\chi_{ D\left(\lambda,\sqrt{\frac{m_\lambda}{m_\lambda+\alpha+1}}\right)}(z)<\infty.
\end{equation*}
\end{definition}

We should mention that this finite overlap condition is intimately
related to the Carleson measure condition.
\\

Now we are in a position to state the geometric condition for sampling divisors. 
\begin{theorem}\label{ThmSamp2}
Let $\alpha>-1$.
\begin{enumerate}[label=(\alph*)]
\item If $X$ is a sampling divisor for $\Bergman$, then $X$ satisfies the finite overlap condition  and {%
there exists $0<C_X<\alpha+1$ } such that
\begin{equation*}
\bigcup_{\lambda\in\Lambda}  D\left(\lambda,\sqrt{\frac{m_\lambda+C_X}{m_\lambda+\alpha+1}}\right)=\U.
\end{equation*}
\item Conversely, suppose the divisor $X$ satisfies the finite overlap condition. 
There is a constant $C>1$ depending on $S_X$ 
 such that if for some compact $K$ of $\U$ we have
\begin{equation*}
\bigcup_{\lambda\in\Lambda, m_\lambda>C}  D\left(\lambda,\sqrt{\frac{m_\lambda-C}{m_\lambda+\alpha+1}}\right)=\U\setminus K,
\end{equation*}
then $X$ is a sampling divisor for $\Bergman$.
\end{enumerate}
\end{theorem}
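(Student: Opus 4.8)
The plan is to analyze both implications through the interplay between the divisor $X$ and the ``local pieces'' $P_\lambda f = \sum_{j<m_\lambda}\langle f,T_\lambda e_j\rangle T_\lambda e_j$, exploiting the identity $\ker P_\lambda = N_{\lambda,m_\lambda}^{2,\alpha}$ and the isometry property of $T_\lambda$. The central computational input is an explicit formula for the quotient norm $\|f\|^2_{\mathcal{A}^2_\alpha/N_{\lambda,m_\lambda}^{2,\alpha}}$, which after transporting by $T_\lambda$ reduces to the case $\lambda=0$: there $\sum_{j<m}|\langle f,e_j\rangle|^2 = \sum_{j<m}\frac{n!\Gamma(2+\alpha)}{\Gamma(n+2+\alpha)}|\widehat f(n)|^2$ involves a truncated sum whose ``tail'' is governed by the incomplete Beta function $B\!\left(m;\cdot,\cdot\right)$, and one checks that the value $r_\lambda = \sqrt{m_\lambda/(m_\lambda+\alpha+1)}$ is precisely the radius at which the mass of $|K_0|^2\,dA_\alpha$ restricted to $D(0,r)$ captures ``most'' of $\sum_{j<m}|\langle f,e_j\rangle|^2$ for a generic $f$ — this is the promised derivation of the critical radius, and it is where the incomplete $\beta$-function replaces the incomplete $\Gamma$-function of the Fock setting.

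For part (a), I would first show the finite overlap condition is necessary: if $X$ is sampling, then testing against normalized reproducing kernels $k_z$ and using the upper sampling inequality forces $\sum_{\lambda\in\Lambda}\|P_\lambda k_z\|^2 \lesssim 1$ uniformly in $z$; since each term $\|P_\lambda k_z\|^2$ is bounded below by a fixed positive constant whenever $z\in D(\lambda,r_\lambda)$ (again from the $\lambda=0$ computation after applying $T_\lambda$), the number of $\lambda$ with $z\in D(\lambda,r_\lambda)$ is uniformly bounded, giving $S_X<\infty$. For the covering statement, I argue by contraposition: if there is $z_0\in\U$ lying outside every slightly enlarged disk $D(\lambda,\sqrt{(m_\lambda+C)/(m_\lambda+\alpha+1)})$ for all $C<\alpha+1$, then for the test function $f=k_{z_0}$ the contribution $\sum_{j<m_\lambda}|\langle k_{z_0},T_\lambda e_j\rangle|^2$ is small for every $\lambda$ simultaneously — because $\rho(\lambda,z_0)^2$ being close to $1$ (not just $\ge r_\lambda^2$) makes the truncated Beta-sum negligible — so the lower sampling inequality $\|k_{z_0}\|^2 \le C\sum_\lambda\|P_\lambda k_{z_0}\|^2$ is violated after a careful quantitative estimate of how the truncation tail decays as the radius passes the threshold $r_\lambda$ by the additive-in-$m_\lambda$ amount $C_X$.

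For the converse (b), the strategy is the standard localization argument for sampling. Given $f\in\mathcal{A}^2_\alpha$, the upper inequality $\sum_\lambda\|P_\lambda f\|^2 \le C\|f\|^2$ follows directly from the finite overlap condition together with a Carleson-type estimate, since $\|P_\lambda f\|^2 = \|f\|^2_{\mathcal{A}^2_\alpha/N_{\lambda,m_\lambda}^{2,\alpha}} \lesssim \int_{D(\lambda,r_\lambda)}|f|^2\,dA_\alpha$ plus a controlled error, and overlapping these integrals costs only the factor $S_X$. The lower inequality is the substantive half: writing $\|f\|^2 = \int_\U|f|^2\,dA_\alpha$ and using that the slightly shrunk disks $D(\lambda,\sqrt{(m_\lambda-C)/(m_\lambda+\alpha+1)})$ cover $\U\setminus K$, one partitions the integral over these disks (with finite overlap) and on each disk estimates $\int_{D(\lambda,r)}|f|^2\,dA_\alpha$ from above by $\|P_\lambda f\|^2$ up to multiplicative constants — this requires that on a pseudohyperbolic disk of the shrunk radius, the mass of $|f|^2$ is comparable to the truncated expansion of $T_\lambda f$ at $0$, which is exactly what the Beta-function analysis of the critical radius delivers once the radius is below threshold by the additive amount $C$. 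The compact set $K$ is absorbed because $\|f\|^2_{\mathcal{A}^2_\alpha}$ restricted to $K$ is controlled by a finite sum of point evaluations, hence by finitely many $\|P_\lambda f\|^2$ (or else one enlarges a single multiplicity).

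The main obstacle will be the precise asymptotic control of the incomplete Beta function $\sum_{j<m}\frac{n!\,\Gamma(2+\alpha)}{\Gamma(n+2+\alpha)}r^{2n}(\cdots)$ uniformly as $m\to\infty$ and as the radius $r$ crosses the critical value $r_\lambda=\sqrt{m/(m+\alpha+1)}$ by an additive-in-$m$ shift: one needs two-sided estimates showing that shifting the squared radius by $\pm C/(m+\alpha+1)$ changes the truncated mass by a definite multiplicative factor bounded away from $0$ and $\infty$, uniformly in $m$. This is the pseudohyperbolic analogue of the Gaussian tail estimates in \cite{BHKM}, but the Beta distribution does not enjoy the same clean scaling, so a careful Laplace-type or Stirling-based expansion — distinguishing the regimes $m$ large versus $m$ bounded — is where the real work lies, and it is also where the constant $C$ (and its dependence on $S_X$, and the appearance of $C_X<\alpha+1$ rather than an absolute constant) gets pinned down.
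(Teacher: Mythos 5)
Your plan for part (b) hinges on a per-disk comparison that is false: you claim that on the shrunk disk one can estimate $\int_{D(\lambda,\sqrt{(m_\lambda-C)/(m_\lambda+\alpha+1)})}|f|^2\,dA_\alpha$ \emph{from above} by $\|P_\lambda f\|^2=\sum_{j<m_\lambda}|\langle f,T_\lambda e_j\rangle|^2$ up to multiplicative constants. Taking $f=T_\lambda e_j$ with $j\ge m_\lambda$ gives $P_\lambda f=0$ while the local integral is strictly positive, so no such two-sided comparability exists: the high-order tail $j\ge m_\lambda$ contributes to the local mass but not to the truncated coefficients. What is actually true (Lemma \ref{lemmamenorradio}, built on Lemma \ref{lemmabetaestimates}(c)) is a \emph{relative} statement: if the truncated coefficients are at most $\eta/2$ and the integral over the \emph{critical} disk is at most $1$, then the integral over the shrunk disk is at most $\eta$. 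Because of this, the paper cannot and does not run a direct localization; it argues by contradiction with a sequence $f_n$, $\|f_n\|_{\alpha,2}=1$, whose sampled quantities tend to $0$, passes to a weak limit, and splits into two cases. When the weak limit is nonzero, $X$ is a zero divisor and the covering hypothesis is contradicted via the uniqueness Theorem \ref{thmcovering2} (proved by the Jensen-type mass redistribution of Section 2) — an ingredient entirely absent from your outline. When the weak limit is zero, the choice $\eta=1/(S_X+1)$ and $C=a(\eta)$ makes the absorbed term $\eta S_X\|f_n\|^2=S_X/(S_X+1)<1$, which is exactly where the dependence of $C$ on $S_X$ enters; your sketch gives no mechanism producing this dependence. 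Your treatment of the compact set $K$ is also incorrect: the $\mathcal{A}^2_\alpha$-mass of $f$ on $K$ is not controlled by finitely many point evaluations; in the paper it is killed by the weak convergence ({\it via} dominated convergence) inside the contradiction argument.

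In part (a) there is a quantifier problem: negating the covering statement gives, for each $C<\alpha+1$, a point $z_C$ outside the $C$-enlarged disks; a single $z_0$ outside them for \emph{all} $C<\alpha+1$ would satisfy $\rho(\lambda,z_0)\ge 1$ for every $\lambda$, which is impossible. The paper takes $C_j\uparrow\alpha+1$ and points $z_j$, shows via the pseudohyperbolic triangle inequality that $\dist(z_j,\bigcup_\lambda D(\lambda,r_\lambda))\to 1$, and only then tests with $f_j=T_{z_j}1$. Moreover, individual smallness of each term $\|P_\lambda k_{z_0}\|^2$ does not suffice, since the lower sampling bound concerns the \emph{sum} over infinitely many $\lambda$: the paper controls the sum by Lemma \ref{lemmacontrolnorm} together with the finite overlap constant $S_X$ and a change of variables reducing everything to $\int_{|w|\ge\rho_j}(1-|w|^2)^\alpha\,dm(w)\to 0$. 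Your identification of the critical radius and of the incomplete Beta function as the replacement for the incomplete Gamma function is correct, and your derivation of the finite overlap and of the upper sampling inequality matches Lemmas \ref{lemmafoc} and \ref{lemmacontrolnorm}; but as it stands the core of both the covering necessity and, especially, the sufficiency part would not go through.
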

This theorem tells us that if disks with slightly smaller radii than the critical one already
cover the unit disk, then we have a sampling divisor. And if a divisor
is sampling then at least disks with slightly bigger radii cover the unit disk (up to a compact set).

One could be tempted to complain about the constant $C$ appearing in (b) above. Note that
the theorem is completely general and applies even in the case of uniformly bounded
multiplicites where the result proved in \cite{BS} requires density conditions. So there is
no hope getting a sufficient condition only from the covering without additional 
conditions for instance on the critical radius.
\\

In the analogous situation for interpolating divisors the covering condition is replaced by 
a separation condition of disks with slightly bigger or smaller radii than the criticial ones.
\begin{theorem}\label{ThmInter2}
Let $\alpha>-1$.
\begin{enumerate}[label=(\alph*)]
\item 
If $X$ is an interpolating divisor for $\Bergman$, then there exists $C_X>0$ such that the hyperbolic disks $$\left\{ D\left(\lambda,\sqrt{\frac{m_\lambda-C_X}{m_\lambda+\alpha+1}}\right)\right\}_{\lambda\in\Lambda,m_\lambda>C_X}$$ are pairwise disjoint.
\item 
Conversely, if for some  $C_X$ such that   $(\alpha+1)(1-e^{-1})<C_X<\alpha+1$,   the hyperbolic disks  $$ \left\{ D\left(\lambda,\sqrt{\frac{m_\lambda+C_X}{m_\lambda+\alpha+1}}\right)\right\}_{\lambda\in\Lambda}$$ are pairwise disjoint, then $X$ is an  interpolating divisor for $\Bergman$.
\end{enumerate}
\end{theorem}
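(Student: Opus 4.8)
The plan is to handle the two implications separately, in both cases transferring the problem to the $\ell^2$-language of the definitions and exploiting the reinterpretation $\sum_{j<m_\lambda}|\langle f,T_\lambda e_j\rangle|^2=\|f\|^2_{\mathcal{A}^2_\alpha/N_{\lambda,m_\lambda}^{2,\alpha}}$ together with the orthogonal projections $P_\lambda$.

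For the necessity part (a), I would argue by duality/contradiction. Suppose $X$ is interpolating; then the restriction-type map $R\colon \mathcal{A}^2_\alpha\to\ell^2(X)$, $Rf=(\langle f,T_\lambda e_j\rangle)$ is surjective, hence bounded below on $(\ker R)^\perp=\overline{\operatorname{span}}\{T_\lambda e_j\}$. In particular, for each fixed $\mu\in\Lambda$ the functions $T_\mu e_j$, $j<m_\mu$, must be ``almost orthogonal'' to all the $T_\lambda e_i$ with $\lambda\neq\mu$: quantitatively, the Gram matrix of the normalized system must have a bounded inverse, which forces $\sum_{\lambda\neq\mu}\sum_{i<m_\lambda}|\langle T_\mu e_j,T_\lambda e_i\rangle|^2$ to be small, uniformly. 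The key computation is then to evaluate $\|P_\mu T_\lambda e_i\|^2$, i.e.\ the norm of $T_\lambda e_i$ modulo $N_{\mu,m_\mu}^{2,\alpha}$; this is where the incomplete $\beta$-function enters, exactly as announced in the introduction. One shows that this quantity is bounded below by a definite positive constant unless $\rho(\lambda,\mu)$ exceeds roughly $\sqrt{m/(m+\alpha+1)}$ (with $m\approx m_\lambda\approx m_\mu$ on the relevant scale), and chasing the constants produces a $C_X>0$ such that the disks $D(\lambda,\sqrt{(m_\lambda-C_X)/(m_\lambda+\alpha+1)})$ must be pairwise disjoint for $m_\lambda>C_X$. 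So (a) reduces to a careful estimate of $\beta$-function tails translating ``two disks overlap'' into ``the interpolation constant blows up''.

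For the sufficiency part (b), assuming the disks $D(\lambda,\sqrt{(m_\lambda+C_X)/(m_\lambda+\alpha+1)})$ are pairwise disjoint with $(\alpha+1)(1-e^{-1})<C_X<\alpha+1$, I would build the interpolating function by the standard ``atoms plus correction'' scheme. For each $\lambda$ one solves the local (one-point Hermite) interpolation problem with the orthogonal projection $P_\lambda$: the atom $g_\lambda:=\sum_{j<m_\lambda}v_\lambda^j\,T_\lambda e_j$ realizes the prescribed germ at $\lambda$ with $\|g_\lambda\|^2=\sum_{j<m_\lambda}|v_\lambda^j|^2$. The naive candidate $g=\sum_\lambda g_\lambda$ lies in $\mathcal{A}^2_\alpha$ because the pairwise disjointness of the critical disks, via the localization of $T_\lambda e_j$ inside $D(\lambda,\cdot)$ (again a $\beta$-function estimate showing that most of the mass of $T_\lambda e_j$ sits in that disk), yields an almost-orthogonality/Schur-test bound $\|g\|^2\lesssim\|v\|_2^2$. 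It does not interpolate exactly because $g_\mu$ leaks into the germ at $\lambda$; but the leakage operator $T$ on $\ell^2(X)$ defined by $(Tv)_\lambda^j=\langle g-g_\lambda,T_\lambda e_j\rangle$ has norm $<1$ precisely when the separation is strictly beyond the critical radius — this is where the explicit lower bound $C_X>(\alpha+1)(1-e^{-1})$ is used: $e^{-1}$ is exactly the value of the relevant normalized incomplete $\beta$-integral at the threshold. Then $I-T$ is invertible, and applying the atom construction to $(I-T)^{-1}v$ gives an exact interpolant, proving $X$ is interpolating.

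The main obstacle I expect is the precise asymptotic analysis of the quantities $\|P_\mu T_\lambda e_i\|^2=1-\|(I-P_\mu)T_\lambda e_i\|^2$ in terms of pseudohyperbolic distance and the multiplicities — that is, establishing the sharp two-sided estimate that the ``influence zone'' of $(\lambda,m_\lambda)$ is genuinely the disk $D(\lambda,\sqrt{m_\lambda/(m_\lambda+\alpha+1)})$, with the error terms controlled uniformly as $m_\lambda\to\infty$. Unlike the Fock case, where one faces incomplete $\Gamma$-functions with clean scaling, here the M\"obius-covariance forces incomplete $\beta$-functions whose asymptotics near the endpoint $1$ are more delicate; getting the constants $e^{-1}$ and $1-e^{-1}$ to come out correctly, and matching the gap between the necessary $C_X$ in (a) and the sufficient range in (b), is the technical heart of the argument. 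Everything else (the Schur test for the $\ell^2$ bound, the Neumann series for $I-T$, the duality in (a)) is routine once those $\beta$-function estimates are in place.
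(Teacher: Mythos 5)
Your proposal for part (b) takes a genuinely different route (atomic decomposition plus a Neumann-series correction) from the paper, but the decisive step is missing: you assert that the ``leakage'' operator has norm strictly less than $1$ precisely when $C_X>(\alpha+1)(1-e^{-1})$, with $e^{-1}$ ``exactly the value of the relevant normalized incomplete $\beta$-integral at the threshold''. Nothing of the sort is available. Enlarging the radius from $\sqrt{m_\lambda/(m_\lambda+\alpha+1)}$ to $\sqrt{(m_\lambda+C_X)/(m_\lambda+\alpha+1)}$ with $C_X<\alpha+1$ only adds a thin annulus, and for $j$ comparable to $m_\lambda$ the mass $I\bigl(\tfrac{m_\lambda+C_X}{m_\lambda+\alpha+1};j+1,\alpha+1\bigr)$ of the atom $T_\lambda e_j$ inside the enlarged disk stays bounded away from $1$ as $m_\lambda\to\infty$ (this is the same incomplete-Gamma limit that appears in the proof of Lemma \ref{lemmabetaestimates}(b)); hence a fixed positive fraction of each atom lives outside its disk, and disjointness of the enlarged disks alone gives no contraction bound, nor does any such $\beta$-integral equal $e^{-1}$ at your threshold. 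In the paper the sufficiency is proved by a $\overline{\partial}$-scheme: a smooth interpolant $F=\sum_\lambda T_\lambda p_\lambda\,\eta(|\varphi_\lambda|-r'_\lambda)$, Ohsawa's weighted estimate (Theorem \ref{ThmOhsawa}) applied with $\phi=(\alpha+1)\log\frac{1}{1-|z|^2}+w_{\Lambda,\alpha}$, where the weight of Lemma \ref{TheWeightedW} carries the singularities $m_\lambda\log|\varphi_\lambda|^2$ (forcing the correction $u$ to vanish to order $m_\lambda$ at $\lambda$), and the constant $(\alpha+1)(1-e^{-1})$ enters only through the requirement $K=\log\frac{\alpha+1}{\alpha+1-C_X}>1$ which keeps $\widetilde{\Delta}\phi$ strictly positive. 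If you insist on the atomic route, the contraction estimate is the entire difficulty and cannot be dismissed as routine.

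For part (a), your reduction from interpolation to a lower Riesz-type bound for $\{T_\lambda e_j\}$ (hence uniform angular separation of the blocks $E_\lambda=\operatorname{span}\{T_\lambda e_j\}_{j<m_\lambda}$) is legitimate, but the quantitative step you then rely on is mis-stated: it is not true that $\|P_\mu T_\lambda e_i\|$ is bounded below whenever $\rho(\lambda,\mu)$ is below roughly the critical radius. For small $i$ the vector $T_\lambda e_i$ is concentrated near $\lambda$, not spread over all of $D_\lambda$, so if the two large disks meet only near their boundaries these projections can be arbitrarily small and no contradiction results. The correct test objects must be localized at the overlap point: the paper picks $w$ in the intersection of the slightly shrunk disks around $\lambda_1,\lambda_2$, uses the interpolation hypothesis to produce $f$ with germ $0$ at $\lambda_1$, germ equal to that of $T_w1$ at $\lambda_2$, and $\|f\|_{\alpha,2}\le M_X$, and then invokes the local estimate Lemma \ref{lemmamenorradio} to make both $f$ and $f-T_w1$ small on $D(w,\varepsilon)$ once $C_X$ is large, while $(\alpha+1)\int_{D(w,\varepsilon)}|T_w1|^2\,dA_\alpha=1-(1-\varepsilon^2)^{\alpha+1}>0$. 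Without such localized test functions (or an equivalent sharp two-sided estimate for the worst unit vector of $E_\lambda$, not for individual $T_\lambda e_i$), your constant chase in (a) does not close.
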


Notice that the separation condition appearing in the statement (a) implies the finite overlap
condition (the overlap is actually void), which is again related to the Carleson measure condition.

Again, we should point out that additional conditions on the constant $C$ are required in
(b) since the theorem is completely general covering the case when the multiplicities are
uniformly bounded in which case the result \cite{BS} involves again density conditions. So,
separation alone for $C$ arbitrary close to 0  cannot be sufficient for interpolation.
\\

{ Concerning both Theorems \ref{ThmSamp2} and \ref{ThmInter2}, we would also like to 
emphasize the fact that densities, even if they provide characterizations for simple 
or uniformly bounded
multiplicites, are hard to compute in a general situation (particularly in the 
pseudohyperbolic metric), while our overlapping and separation conditions are much easier
to apprehend.}
\\

{ {Here is another observation: in case $X$ is a sampling divisor, the finite overlap 
condition is necessary. In case $X$ is an interpolating divisor, we get a separation condition,
which obviously also implies the finite overlap (there is actually no overlap and
now $S_X=1$). So in both
cases, the area of pseudohyperbolic disks centered at $\lambda$ and with radius 
comparable to $\sqrt{(m_{\lambda}-C)/(m_{\lambda}+\alpha+1)}$ add up to a finite sum, 
yielding the following Blaschke type condition which seems new:
 \begin{eqnarray}\label{Blaschke}
 \sum_{\lambda\in\Lambda}(m_{\lambda}(1-|\lambda|^2))^2<+\infty.
\end{eqnarray}
}}

  The result which affirms that in the Fock space (with Gaussian weight) there are no Riesz bases (simultaneously interpolating and sampling)  is quite expensive to obtain. First in \cite{Sei93} for simple multiplicity, and later for uniformly bounded multiplicity  in \cite{BS} this requires the density characterizations of interpolating and sampling sequences. More recently the third and fourth authors discussed this problem in \cite{BHKM} when the multiplicities go to infinity. In this case, there are no characterizations available, but the gap between necessary conditions for multiple sampling and multiple interpolation (given by the corresponding results to Theorems \ref{ThmSamp2} and \ref{ThmInter2}), together with some geometric lemma, allowed to conclude. The situation in Bergman spaces is dramatically simpler. 
The main reason is due to the fact that the multiplier algebra for standard Bergman spaces is not trivial and contains bounded analytic functions in the unit disk. As a consequence, any interpolating divisor is a zero divisor (pick a function vanishing in all the points $\lambda$ up to the
order $m_{\lambda}-1$ except for one point $\lambda_0$ in which we interpolate the value 1, 
then multiply the interpolating function by $b_{\lambda_0}^{m_{\lambda}}$), and can
thus not be sampling since sampling divisors are uniqueness.
 \\

In view of the above discussions, the central role of zero and uniqueness sets should be
clear. In this connection, we will formulate here a necessary condition for zero divisors which 
does not seem to follow from those known so far. A fairly precise results on zero sets
in $\mathcal{A}$

\begin{theorem}\label{thmcovering2}
Let $\alpha>0$, $\varepsilon>0$, and $X=\{(\lambda,m_\lambda)\}_{\lambda\in\Lambda}$ be a divisor such that
\begin{equation}\label{eqcovering2}
    \bigcup_{\lambda\in\Lambda} D\left(\lambda,\sqrt{\frac{m_\lambda}{m_\lambda+\alpha+2+\varepsilon}}\right)=\U\setminus K
\end{equation}
for some compact set $K\subset \U$.
Then $X$ is uniqueness divisor for $\mathcal{A}^{2}_{\alpha}$.
\end{theorem}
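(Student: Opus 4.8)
\section*{Proof proposal}

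\begin{proof}[Plan of proof]
Set $N_X:=\bigcap_{\lambda\in\Lambda}N_{\lambda,m_\lambda}^{2,\alpha}=\{f\in\Bergman:\ f \text{ vanishes to order } m_\lambda \text{ at } \lambda\ \text{for all }\lambda\in\Lambda\}$; the assertion is that $N_X=\{0\}$. The plan rests on two elementary remarks that frame everything. First, $N_X$ is invariant under multiplication by $z$ (if $f\in N_X$ then $zf\in N_X$ and $\|zf\|_{\alpha,2}\le\|f\|_{\alpha,2}$), so a nonzero $N_X$ is automatically infinite dimensional: a finite dimensional $z$-invariant subspace of $\Bergman$ would contain an eigenvector of multiplication by $z$, which forces it to contain a nonzero function annihilated by $(z-\mu)$ for some $\mu\in\C$, impossible. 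Second, for any compact $K\subset\U$ the restriction $R_K:\Bergman\to L^2(K,dA_\alpha)$ is a \emph{compact} operator, since by \eqref{Croissance} and Cauchy's estimates the unit ball of $\Bergman$ is uniformly bounded and equi-Lipschitz on $K$, hence precompact in $C(K)$ and in $L^2(K,dA_\alpha)$. Consequently it suffices to establish the a priori inequality
\[
\|f\|_{\alpha,2}^2\ \le\ C\int_K|f|^2\,dA_\alpha,\qquad f\in N_X ,
\]
for a single compact $K$: then $R_K|_{N_X}$ is compact and bounded below, so $N_X$ is finite dimensional, hence $N_X=\{0\}$ and $X$ is a uniqueness divisor.

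The first step is the basic local bound coming from the vanishing. Fix $\lambda\in\Lambda$, $f\in N_X$, and put $g=T_\lambda f$; since $T_\lambda$ is an isometric involution of $\Bergman$ and $\varphi_\lambda\big(D(0,r)\big)=D(\lambda,r)$ we have $\|g\|_{\alpha,2}=\|f\|_{\alpha,2}$, the function $g=\sum_{n\ge m_\lambda}c_nz^n$ vanishes to order $m_\lambda$ at $0$, and $\int_{D(\lambda,r)}|f|^2\,dA_\alpha=\int_{D(0,r)}|g|^2\,dA_\alpha$. Computing $\int_{D(0,r)}|z|^{2n}\,dA_\alpha=\|z^n\|_{\alpha,2}^2\,I_{r^2}(n+1,\alpha+1)$ as a regularized incomplete beta integral and using that $I_x(n+1,\alpha+1)$ is decreasing in $n$, one gets
\[
\int_{D(\lambda,r)}|f|^2\,dA_\alpha\ \le\ I_{r^2}(m_\lambda+1,\alpha+1)\,\|f\|_{\alpha,2}^2 ;
\]
the same Cauchy--Schwarz argument at the level of Taylor coefficients, together with the identity $(1-x)^{\alpha+2}\sum_{j<m}\binom{j+\alpha+1}{j}x^j=1-I_x(m,\alpha+2)$ (equivalently, the computation of $\big\|(\mathrm{proj}_{N_{\lambda,m_\lambda}^{2,\alpha}})K_z\big\|$), yields the pointwise companion $|f(z)|^2(1-|z|^2)^{\alpha+2}\le I_{\rho(\lambda,z)^2}(m_\lambda,\alpha+2)\,\|f\|_{\alpha,2}^2$. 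The crucial point, and the first delicate step, is that the critical radius $r_\lambda=\sqrt{m_\lambda/(m_\lambda+\alpha+2+\varepsilon)}$ keeps these beta-values \emph{uniformly} below $1$: one checks $\sup_{m\ge1}I_{r_m^2}(m,\alpha+2)<1$ and, more delicately, comparing $r_\lambda$ with the slightly larger radius $R_\lambda=\sqrt{m_\lambda/(m_\lambda+\alpha+2+\varepsilon/2)}$ produces a \emph{uniform gap}
\[
\int_{D(\lambda,r_\lambda)}|f|^2\,dA_\alpha\ \le\ \theta\int_{D(\lambda,R_\lambda)}|f|^2\,dA_\alpha,\qquad \theta=\theta(\alpha,\varepsilon)<1 ,
\]
which relies on the Gaussian-type asymptotics of $I_x(n+1,\alpha+1)$ for $n\ge m$ and ultimately on the elementary inequality $\log(1+u)<u$ (this is where $\alpha>0$ and the exact exponent $\alpha+2+\varepsilon$ enter).

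It remains to globalize. Replacing $K$ by a pseudohyperbolic disk $\overline{D(0,R_0)}\supset K$ with $R_0<1$, the hypothesis gives $\bigcup_{\lambda}D(\lambda,r_\lambda)\supseteq\U\setminus\overline{D(0,R_0)}$. The $D(\lambda,r_\lambda)$ are Euclidean disks of uniformly bounded Euclidean radius with centres in the covered set, so a covering lemma of Besicovitch type extracts a subfamily $\Lambda'\subset\Lambda$ still covering $\U\setminus\overline{D(0,R_0)}$ and with controlled overlap, the enlargements $D(\lambda,R_\lambda)$, $\lambda\in\Lambda'$, having controlled overlap as well. Summing the relative estimate over $\Lambda'$ and using the overlap bounds yields $\int_{\U\setminus\overline{D(0,R_0)}}|f|^2\,dA_\alpha\le\theta'\,\|f\|_{\alpha,2}^2$ for $f\in N_X$, hence $\int_{\overline{D(0,R_0)}}|f|^2\,dA_\alpha\ge(1-\theta')\|f\|_{\alpha,2}^2$, and the proof closes as explained above. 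The main obstacle is precisely this passage from local to global: covering losses are multiplicative while $\theta\to1$ as $\varepsilon\downarrow0$, so one must organize the covering so as not to destroy the gap $\theta<1$ (e.g.\ extracting an essentially disjoint subfamily and only comparing each $D(\lambda,r_\lambda)$ with its concentric enlargement $D(\lambda,R_\lambda)$), which in turn forces the use of the sharp form of the incomplete-beta comparison rather than its crude bound. The rest of the argument is routine bookkeeping.
\end{proof}
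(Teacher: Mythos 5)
Your reduction scheme (closedness and $z$-invariance of $N_X:=\bigcap_\lambda N^{2,\alpha}_{\lambda,m_\lambda}$, compactness of the restriction to a compact set, hence it suffices to prove a reverse estimate $\|f\|_{\alpha,2}^2\le C\int_K|f|^2dA_\alpha$ on $N_X$) is sound, and your local incomplete-beta estimates are in the spirit of the paper's Lemma \ref{lemmabetaestimates}. But the step you yourself flag as ``the main obstacle'' is a genuine gap, not bookkeeping. To globalize you must sum the local gap estimate $\int_{D(\lambda,r_\lambda)}|f|^2dA_\alpha\le\theta\int_{D(\lambda,R_\lambda)}|f|^2dA_\alpha$ over a subfamily that still \emph{covers} $\U\setminus\overline{D(0,R_0)}$; the covering forces overlap of the enlarged disks with multiplicity $M>1$ (bounded below by a geometric constant independent of how cleverly you select), and the chain of inequalities only yields $\int_{\U\setminus\overline{D(0,R_0)}}|f|^2dA_\alpha\le\theta M\|f\|_{\alpha,2}^2$, which is useful only if $\theta M<1$. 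Your $\theta$ is a fixed constant depending on $\alpha,\varepsilon$ and, comparing $r_\lambda^2=m/(m+\alpha+2+\varepsilon)$ with $R_\lambda^2=m/(m+\alpha+2+\varepsilon/2)$, one sees (gamma-tail limit of the incomplete beta as $m\to\infty$) that $\theta\to1$ as $\varepsilon\downarrow0$; so for small $\varepsilon$ the product $\theta M$ exceeds $1$ and the argument collapses. Passing instead to an ``essentially disjoint'' subfamily destroys the covering and hence the first inequality, so there is no mechanism in your scheme to beat the overlap loss. There is also a secondary problem with the extraction itself: the disks $D(\lambda,r_\lambda)$ are Euclidean disks, but they are not centered at the points they cover and their pseudohyperbolic radii are unbounded (hyperbolic radii tend to infinity with $m_\lambda$), so a Besicovitch-type bounded-overlap subcover is not available off the shelf.

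This multiplicative-loss issue is exactly what the paper's proof is designed to avoid. The paper first proves the $\mathcal{A}^{\infty}_{\alpha}$ uniqueness statement, Theorem \ref{thmcovering}, by the subharmonic surgery of Lemma \ref{Lem2.1}: around each zero the term $m_\lambda\log(|\varphi_\lambda|/r_\lambda)$ in $\log|f|$ is replaced by $\frac{\alpha+\varepsilon}{2}\log\frac{1-r_\lambda^2}{1-|\varphi_\lambda|^2}$, whose invariant Laplacian is constant, and Green's formula then gives the Jensen-type bound $\int_{D(0,r)}\sum_\lambda\chi_{D_\lambda}\log\frac{r}{|z|}d\V\le\frac{\alpha}{2(\alpha+\varepsilon)}\log\frac{1}{1-r^2}+O(1)$. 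The covering hypothesis enters only through $\sum_\lambda\chi_{D_\lambda}\ge1$ off $K$ (overlaps can only strengthen the lower bound $\frac12\log\frac{1}{1-r^2}+O(1)$), and the contradiction is between the asymptotic coefficients $\frac12$ and $\frac{\alpha}{2(\alpha+\varepsilon)}<\frac12$ --- no overlap constant ever appears. Theorem \ref{thmcovering2} is then deduced in one line from the embedding $\mathcal{A}^2_\alpha\subset\mathcal{A}^{\infty}_{\alpha+2}$ given by \eqref{Croissance}, which is why the exponent $\alpha+2+\varepsilon$ appears in \eqref{eqcovering2}. If you want to salvage your route, you would need a local estimate whose constant improves with the amount of overlap, which in effect amounts to re-deriving a Jensen/mass-counting inequality of the paper's type.
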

We recall that a uniqueness divisor is a non zero divisor.\\

Without going into more details in this introduction, we mention that the same problems can be considered in the uniform norm: let $\alpha>0$, and define
\begin{equation*}
    \mathcal{A}^{\infty}_\alpha=\left\{f\in \Hol
(\mathbb{D}):\|f\|^2_{\alpha,\infty}:=\sup_{z\in\mathbb{D}} (1-|z|_{2})^{\frac{\alpha}{2}}|f(z)|<+\infty\right\}.
\end{equation*}  

In this setting, the results are completely analogous -- replacing essentially
$\alpha+1$ by $\alpha$ in the theorems cited above -- and will be discussed in Section 4.
Note that it follows immediately from \eqref{Croissance} that $\mathcal{A}^{2}_{\alpha}
\subset \mathcal{A}^{\infty}_{\alpha+2}$ which allows to connect some results between
both situations. However, in general the results in $\mathcal{A}^{\infty}_{\alpha}$ do
not follow immediately from those for $\mathcal{A}^2_{\alpha}$ and the proofs have to 
be rerun. We also point out a curious phenomenon. Indeed the above embedding works
for $\alpha>-1$, but there is no $A^2_{\beta}$ which
embeds into $A^{\infty}_{\alpha}$ when $\alpha\in (0,1]$.

{ {We would also like to comment on the uniqueness result Theorem \ref{thmcovering2} and
its corresponding result Theorem \ref{thmcovering} below in $\mathcal{A}^{\infty}_{\alpha}$
(which has essentially the same statement just
replacing $\alpha+2$ by $\alpha$). In \cite{Sei95}, Seip
gave fairly precise sufficient and necessary conditions exhibiting a small gap between
these. His conditions are based on the Korenblum density which is difficult to check in general.
The condition appearing in \eqref{eqcovering2} (or in \eqref{eqcovering} below) yields 
maybe a more transparent necessary condition for zero divisors
(for $X$ to be a zero divisor it is necessary
that the covering condition \eqref{eqcovering2} does not hold for any compact 
$K$ and any $\varepsilon>0$).}}
\\

The paper is organized as follows: in Section 2,  we prove the  $\mathcal{A}^{\infty}_{\alpha}$ uniqueness Theorem.   Section 3  is devoted to the proofs of Theorem 
\ref{ThmSamp2} and \ref{ThmInter2} respectively. In Section 4 we discuss the uniform case.\\

Throughout this paper we use the following notations :
\begin{itemize}
\item $A\lesssim B$ means that there is an absolute constant $C$ such that $A \le CB$. 
\item $A\asymp B$  if both $A\lesssim B$ and $B\lesssim A$. 
\end{itemize}

\medskip

\noindent{\bf Acknowledgements:}
The authors would like to thank Omar El Fallah, Xavier Massaneda and Joaquim Ortega-Cerd\`a 
for helpful discussions.

\section{Zeros and a Jensen type formula}\label{setionzeros}

In this section we will prove a uniqueness result for $\mathcal{A}^{\infty}_{\alpha}$ (which will
also be useful for the case $p=2$). 

Let us introduce  the invariant measure on the unit disk $d\V(z)=(1-|z|^2)^{-2}dm(z)$, where $m$ is the normalized Lebesgue measure such that $m(\U)=1$. It it well-known that $d\V$ is invariant
under M\"obius transforms. We mention that a direct calculation shows that
\[
 \int_{D(\zeta,r)}d\V(z)=\int_{D(0,r)}d\V(z)=\frac{r^2}{1-r^2}, \quad 0<r<1.
\]
We observe here that
morally speaking, the critical  radius $r$ has to be chosen more or less in such a way that
this mass corresponds to the multiplicity. To be more precise, and as we will see below (see \eqref{rlambda} below), the critical radius has to be chosen {\it via} redestributing the mass of the laplacian of the logarithm of an
$\mathcal{A}^{\infty}_{\alpha}$-function (as was done in the 
euclidean metric appearing in the setting of the Fock space).
\\

We will discuss the situation here in $\mathcal{A}^{\infty}_{\alpha}$ which requires $\alpha>0$ contrarily to $\mathcal{A}^2_{\alpha}$ where $\alpha>-1$.

Now, in the spirit of our observation above (and in particular \eqref{rlambda} below), for fixed $\varepsilon>0$, let 
\begin{equation}\label{CritValInfty}
 r_{\lambda}=  r_{\lambda,\alpha,\varepsilon}:=\sqrt{\frac{m_{\lambda}}{m_{\lambda}+\alpha+\varepsilon}}
\end{equation}
and set $D_{\lambda}=D(\lambda,r_{\lambda})$. (In a sense, the critical radius in $\mathcal{A}_{\alpha}^{\infty}$ corresponds to the situation when $\varepsilon=0$.)

\begin{lemma}\label{Lem2.1}
Let $\alpha>0$, if $X=\{(\lambda_k,m_k)\}_{k\geq1}$  is a zero divisor for $\Bergmaninf$, then
for $\varepsilon>0$,
\begin{equation*}
    \int_{D(0,r)}\sum_{k\geq1}\chi_{D_\lambda}
(z)\log\frac{r}{|z|}d\V(z)\leq\frac{\alpha}{2(\alpha+\varepsilon)} \log\frac{1}{1-r^2}+O(1),\quad r\xrightarrow\ 1.
\end{equation*}
\end{lemma}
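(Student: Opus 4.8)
The plan is to start from the fact that $X$ is a zero divisor for $\mathcal{A}^\infty_\alpha$, so there is a non-zero $f \in \mathcal{A}^\infty_\alpha$ vanishing to order $m_k$ at each $\lambda_k$. Normalizing so that $f(0) \neq 0$ (multiply by a Blaschke factor if needed, absorbing the change into the $O(1)$), I would apply the Riesz/Jensen decomposition to $u = \log|f|$: since $\Delta \log|f| = 2\pi \sum_k m_k \delta_{\lambda_k}$ (as a distribution) away from the boundary, the classical Jensen formula on the Euclidean disk $\{|z|<r\}$ gives
\begin{equation*}
 \frac{1}{2\pi}\int_0^{2\pi} \log|f(re^{i\theta})|\,d\theta - \log|f(0)| = \sum_{|\lambda_k|<r} m_k \log\frac{r}{|\lambda_k|}.
\end{equation*}
The left-hand side is bounded above by $\frac{\alpha}{2}\log\frac{1}{1-r^2} + O(1)$ because $f \in \mathcal{A}^\infty_\alpha$ means $|f(z)| \lesssim (1-|z|^2)^{-\alpha/2}$. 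So the \emph{Euclidean} Jensen sum $\sum_{|\lambda_k|<r} m_k \log(r/|\lambda_k|)$ is already controlled by $\frac{\alpha}{2}\log\frac{1}{1-r^2}+O(1)$. The real work is to convert this into the stated \emph{pseudohyperbolic} statement, replacing the point masses $m_k\delta_{\lambda_k}$ by the spread-out densities $\chi_{D_k}\,d\mathcal{V}$ and gaining the extra factor $\frac{\alpha+\varepsilon}{\ldots}$, i.e.\ improving $\frac{\alpha}{2}$ to $\frac{\alpha}{2(\alpha+\varepsilon)}$... wait — one must be careful: the target bound is \emph{smaller} by the factor $1/(\alpha+\varepsilon)$, which is what one should expect since the normalized radius $r_\lambda^2 = m_\lambda/(m_\lambda+\alpha+\varepsilon)$ assigns to $D_\lambda$ the $\mathcal{V}$-mass $r_\lambda^2/(1-r_\lambda^2) = m_\lambda/(\alpha+\varepsilon)$, i.e.\ each unit of multiplicity corresponds to $\mathcal{V}$-mass $1/(\alpha+\varepsilon)$.

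Concretely, I would use the pseudohyperbolic Jensen identity: because $d\mathcal{V}$ is Möbius invariant and $\log(1/|z|)$ is (up to normalization) the Green function of the disk, for each $k$ one has
\begin{equation*}
 \int_{D_\lambda} \log\frac{1}{\rho(\lambda_k,z)}\,d\mathcal{V}(z) = \int_{D(0,r_{\lambda_k})} \log\frac{1}{|w|}\,d\mathcal{V}(w) = \tfrac12\bigl(\text{explicit function of }r_{\lambda_k}\bigr),
\end{equation*}
and more importantly a \emph{sub-mean-value} comparison: for $z$ ranging over $D(0,r)$, the quantity $\int_{D_\lambda}\chi(\cdot)\log(r/|\cdot|)\,d\mathcal{V}$ should be dominated by $m_k \log(r/|\lambda_k|)$ up to controlled error terms, using the concavity/superharmonicity of $w\mapsto \log(r/|w|)$ together with the fact that the $\mathcal{V}$-mass of $D_\lambda$ is exactly $m_k/(\alpha+\varepsilon)$. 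This is where the factor $1/(\alpha+\varepsilon)$ enters: replacing a point mass $m_k$ at $\lambda_k$ by a measure of total mass $m_k/(\alpha+\varepsilon)$ spread over $D_\lambda$, and testing against the superharmonic function $\log(r/|z|)$, gives roughly $\frac{1}{\alpha+\varepsilon}$ times the value one would get from the point mass evaluated at $\lambda_k$ — valid when $D_\lambda$ is not too large, which one handles by splitting $\Lambda$ into the $\lambda_k$ with $|\lambda_k|$ bounded away from $1$ (finitely many relevant ones for each $r$, contributing $O(1)$) and the rest, for which $m_k/(m_k+\alpha+\varepsilon)$ and the geometry are tame.

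The main obstacle, I expect, is the careful bookkeeping of boundary effects and of the $\lambda_k$ whose disk $D_\lambda$ pokes across the circle $|z|=r$: for those, $\chi_{D(0,r)}\cdot\log(r/|z|)$ restricted to $D_\lambda$ is not simply comparable to $m_k\log(r/|\lambda_k|)$, and one needs the finiteness of the Jensen sum (already established) plus a uniform estimate on how much mass of $D_\lambda$ can lie near $|z|=r$ to show the discrepancy is $O(1)$ as $r \to 1$. A secondary technical point is justifying the passage from the Euclidean Jensen formula to the invariant formulation cleanly — I would do this by a direct change of variables $w = \varphi_{\lambda_k}(z)$ in each integral, exploiting that $\varphi_{\lambda_k}$ preserves $d\mathcal{V}$ and maps $D_\lambda$ to $D(0,r_{\lambda_k})$, and then summing, with the sum of $\log(r/|\lambda_k|)$-type terms controlled by the growth bound on $f$. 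Everything else — the explicit value $\int_{D(0,r)}d\mathcal{V} = r^2/(1-r^2)$, Stirling-type asymptotics if they appear, and the $O(1)$ absorption of the finitely many large disks — is routine.
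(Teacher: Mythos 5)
Your core computation for disks lying entirely inside $D(0,r)$ is correct and clean: since $z\mapsto\log\frac{r}{|z|}$ is superharmonic on all of $\C$, its pullback under $\varphi_{\lambda_k}$ is superharmonic, and averaging against the rotation-invariant measure $d\V$ on $D(0,r_{\lambda_k})$ gives
\begin{equation*}
\int_{D_{\lambda_k}}\log\frac{r}{|z|}\,d\V(z)\;\le\;\V\bigl(D(0,r_{\lambda_k})\bigr)\,\log\frac{r}{|\lambda_k|}\;=\;\frac{m_k}{\alpha+\varepsilon}\,\log\frac{r}{|\lambda_k|},
\end{equation*}
so summing over those $k$ and invoking the Euclidean Jensen bound $\sum_{|\lambda_k|<r}m_k\log(r/|\lambda_k|)\le\frac{\alpha}{2}\log\frac{1}{1-r^2}+O(1)$ does yield the stated estimate for that part of the sum, with the factor $1/(\alpha+\varepsilon)$ entering exactly as you say. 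The genuine gap is the disks $D_{\lambda_k}$ that meet the circle $|z|=r$, which you flag as ``the main obstacle'' but do not actually resolve, and your proposed fix (finiteness of the Jensen sum at radius $r$ plus a uniform bound on the mass of $D_{\lambda_k}$ near $|z|=r$) does not obviously work. Two concrete problems: (i) for a straddling disk whose \emph{center} lies outside $D(0,r)$, the Jensen sum at radius $r$ contains no corresponding term (and $\log(r/|\lambda_k|)\le 0$), while the portion of $D_{\lambda_k}$ inside $D(0,r)$ carries strictly positive weight; because the multiplicities are unbounded, such a disk can have $\V$-mass $m_k/(\alpha+\varepsilon)$ arbitrarily large and can penetrate deep into $D(0,r)$, and no single larger Jensen radius $R<1$ captures all of them uniformly in $r$. (ii) Even for straddlers centered inside $D(0,r)$, the natural correction term $\int_{D_{\lambda_k}\setminus D(0,r)}\log\frac{|z|}{r}\,d\V$ is only bounded by $(1-r)\,m_k/(\alpha+\varepsilon)$, and summing this over the relevant $k$ via Jensen at radius $\tfrac{1+r}{2}$ gives a bound of order $\alpha\log\frac{1}{1-r}$ --- the same order as the main term, not $O(1)$ --- so the discrepancy cannot simply be declared negligible; a finer argument (or extra information on the divisor) is needed, and none is supplied.

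This is precisely the difficulty the paper's proof is engineered to avoid: there, the Riesz mass $m_k\delta_{\lambda_k}$ is smeared \emph{before} any Jensen-type formula is applied, by inductively replacing $m_k\log(|\varphi_{\lambda_k}|/r_{\lambda_k})$ inside $D_{\lambda_k}$ with $\frac{\alpha+\varepsilon}{2}\log\frac{1-r_{\lambda_k}^2}{1-|\varphi_{\lambda_k}|^2}$, producing a subharmonic majorant $h$ with $\log|f|\le h\le \log\|f\|_{\alpha,\infty}+\frac{\alpha}{2}\log\frac{1}{1-|z|^2}$ (this is where $\varepsilon$ is needed, to keep the comparison with the majorant subharmonic), and only then applying Green's formula on $D(0,r)$. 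Partial disks are then handled automatically by the boundary integral of $h$ on $|z|=r$, with no term-by-term comparison and no bookkeeping of which disks cross the circle. So your route is genuinely different (Jensen for the point masses first, redistribution second) and is simpler for the interior disks, but as it stands the boundary-crossing disks leave a real hole that would have to be closed, most likely by some version of the paper's idea of smearing at the level of the subharmonic function rather than at the level of the Jensen sum.
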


\begin{proof}
Let $X=\{(\lambda_k,m_{k})\}_{k\geq 1}$ be a zero divisor for $A^\infty_{\alpha}$ then there exists a non zero function $f\in\Bergmaninf$ such that
\begin{equation*}  f^{(k)}(\lambda)=0,\qquad 0\leq k< m_\lambda,\ \lambda\in\Lambda.
\end{equation*}

We will obtain our condition by redistributing the mass $\Delta(\log|f|)$ on the hyperbolic disks $D_{\lambda}$.

The function $\log|f|$ is subharmonic and not identically $-\infty$ and we have for all $z\in\mathbb{D}$ 
\begin{equation*}
\log|f(z)|\leq \log\|f\|_{\alpha,\infty}+\frac{\alpha}{2} \log\frac{1}{1-|z|^{2}}=:s(z).
\end{equation*} 
We use the same inductive method as in \cite{BHKM}. First we construct a new sub-harmonic function $h$ in $\mathbb{D}$ such that 
\begin{equation*}
\log|f(z)|\leq h(z) \leq s(z). 
\end{equation*}
Later, we will obtain our bound using Green's identity. Let $h_0=\log|f|$ and recall $r_{\lambda_k}=\sqrt{\frac{m_{k}}{m_{k}+\alpha+\varepsilon}}$, then 
\begin{equation*}
    h_0(z)=m_{1}\log\frac{|\varphi_{\lambda_{1}}(z)|}{r_{\lambda_1}}+ \underbrace{\log|f(z)|- m_{1}\log\frac{|\varphi_{\lambda_{1}}(z)|}{r_{\lambda_1}}}_{=U_0}.
\end{equation*}
Since $f/\varphi_{\lambda_1}^{m_{\lambda_1}}$ is holomorphic in $\U$,
$U_0$ is a subharmonic function on $\mathbb{D}$ and harmonic in a small neighborhood of $\lambda_{1}$. In fact, in a small neighborhood $V$ of $\lambda_{1}$ not
containing any other zero of $f$ than $\lambda_1$, we have
\begin{equation*}
    |f(z)|=|\varphi_{\lambda_1}(z)|^{m_1} |g(z)|, 
    \end{equation*} 
where $g$ is a holomorphic function with no zeros in $V$.

Therefore 
\begin{equation*}
    \Delta\log|f(z)|=m_1\Delta\log|\varphi_{\lambda_1}(z)|+\underbrace{\Delta\log|g|}_{=0},\quad z\in V.
\end{equation*}

Now we will modify the term $m_{1}\log\frac{|\varphi_{\lambda_{1}}(z)|}{   r_{\lambda_1}   }$ to obtain a constant function with respect to the invariant laplacian.
Let us consider
\begin{equation}
v_1(z)= \left\lbrace\begin{array}{lll} \frac{\alpha+\varepsilon}{2}\log\frac{1-r_{\lambda_1}^{2}}{1-|\varphi_{\lambda_{1}}(z)|^2}, ~z\in D(\lambda_1,r_{\lambda_1}), \\ \\ m_{1}\log\frac{|\varphi_{\lambda_{1}}(z)|}{r_{\lambda_1}}, ~~~~\mbox{ otherwise}.
\end{array}\right.
\end{equation}
Then $v_1$ is harmonic outside $D(\lambda_1, r_{\lambda_1})$ and $v_1\in\textit{C}^{1}(\mathbb{D})$. Using $\Delta=4\partial\overline{\partial}$, and the fact that
$\log|1-\overline{\lambda}z|$ is harmonic,  we have inside $D(\lambda_1, r_{\lambda_1})$
\begin{eqnarray*}
\Delta v_1
&=&\Delta\left(\frac{\alpha+\varepsilon}{2}\log\frac{1-r_{\lambda_1}^{2}}{1-|\varphi_{\lambda_{1}}(z)|^2}\right)= \frac{\alpha+\varepsilon}{2}\Delta\left(\log\frac{|1-\overline{\lambda_1}z|^2}{(1-|z|^2)(1-|\lambda_1|^2)}\right)\\
&=&(\alpha+\varepsilon) \Delta(\log|1-\overline{\lambda_1}z|)-\frac{\alpha+\varepsilon}{2}\Delta(\log(1-|z|^2))\\
&=& \frac{2(\alpha+\varepsilon)}{(1-|z|^2)^2}.
\end{eqnarray*}
(We have used that $\Delta \log(1-|z|^2)=\displaystyle -\frac{4}{(1-|z|^2)^2}$.)
Observe that with the definition of the invariant laplacian $\widetilde{\Delta}u=(1-|z|^2)^2\Delta u$, the preceding computation shows that $\widetilde{\Delta}v_1$ is constant.
We thus obtain the total mass of the measure $\Delta v_1$ on $D(\lambda_1, r_{\lambda_1})$ which is equal to $2(\alpha+\varepsilon) \V(D(\lambda_1,r_{\lambda_1}))$:
\begin{eqnarray}\label{rlambda}
    \int_{\mathbb{D}}\Delta v_1 dm
&=&\int_{D(\lambda_1,r_{\lambda_1})} 2(\alpha+\varepsilon)\frac{dm}{(1-|z|^2)^2}=
2(\alpha+\varepsilon)\V(D(\lambda_1,r_{\lambda_1}))\nonumber\\
&=&2(\alpha+\varepsilon)\frac{r_{\lambda_1}^2}{1-r_{\lambda_1}^2}.
\end{eqnarray}
By the specific definition of $r_{\lambda_1}$ this is equal to $2m_{\lambda_1}$.
\\

On the other hand, since $\Delta \log|z|=2\pi\delta_0$,
\begin{align*}
\int_{\mathbb{D}}\Delta\left(m_{1}\log\frac{|\varphi_{\lambda_{1}}(z)|}{r_{\lambda_1}}\right)dm(z)&=m_1\int_{\mathbb{D}}\Delta(\log|\lambda_1-z|)dm(z)\\
&= 2 m_1. 
\end{align*}
In particular, by the very definition of $r_{\lambda_1}$,
both total masses coincide so that in terms of Laplacians, we can
replace $m_1\log\frac{|\varphi_{\lambda_1}(.)|}{r_{\lambda_1}}$ by $v_1(.)$
in $D(\lambda_1,r_{\lambda_1})$. This yields the function $h_1=v_1+U_0$. Obviously,
\begin{equation*}
    h_1(z)=\frac{\alpha+\varepsilon}{2}\log\frac{1-r_{\lambda_1}^{2}}{1-|\varphi_{\lambda_{1}}(z)|^2}\chi_{D(\lambda_1,r_{\lambda_1})}(z)+m_{1}\log\frac{|\varphi_{\lambda_{1}}(z)|}{c_{\lambda_1}}\chi_{\mathbb{D}\backslash D(\lambda_1,r_{\lambda_1})}(z)+U_0.
\end{equation*}
Let us show that 
\begin{enumerate}[label=(\alph*)]
	\item $h_1(z)\leq s(z)$, $z\in\mathbb{D}$,
	\item $h_0(z)\leq h_1(z)$, $z\in\mathbb{D}.$
\end{enumerate}
We start proving $h_1\leq s$. This is clear for $z\not\in  D(\lambda_1,r_{\lambda_1})$, because $h_1=h_0$ in $\U\setminus D(\lambda_1,r_{\lambda_1})$. For $z\in D(\lambda_1,r_{\lambda_1})$, we consider the function
$w_1=v_1+U_0-s$, we have 
\begin{equation*}
\Delta w_1=\Delta\left(v_1+U_0-\frac{\alpha}{2}\log\frac{1}{1-|z|^2}\right)=\Delta U_0
+\Delta\frac{\varepsilon}{2}\log\frac{1}{1-|z|^2}\geq 0.
\end{equation*}
Hence $w_1$ is subharmonic on $D(\lambda_1,r_{\lambda_1})$, and for $\xi\in \d D(\lambda_1,r_{\lambda_1})$, since $v_1(\xi)=0$, we have 
\[
 w_1(\xi)=U_0(\xi)-s(\xi)=h_0(\xi)-s(\xi)=\log|f(\xi)|-s(\xi)\leq 0.
\]
So in the boundary $w_1$ is non-positive, so that it is non-positive throughout the disc by the maximum principle.

It remains to see $h_0\leq h_1$. Again outside the hyperbolic disc  $D(\lambda_1,r_{\lambda_1})$ we have $h_0= h_1$. In the disc $D(\lambda_1,r_{\lambda_1})$ we need to compare the following functions 
\begin{equation*}
\varphi(z)=\frac{\alpha+\varepsilon}{2}\log\frac{1-r_{\lambda_1}^{2}}{1-|\varphi_{\lambda_{1}}(z)|^2},
\end{equation*}
and 
\begin{equation*}
\psi(z)=m_{1}\log\frac{|\varphi_{\lambda_{1}}(z)|}{r_{\lambda_1}}.
\end{equation*}
More precisely, we have to show that $\psi\le \varphi$ on $D(\lambda_1,r_1)$.
For this, we use the auxiliary functions
$$\varphi :x\longmapsto \frac{\alpha+\varepsilon}{2}\log \frac{1-r_{\lambda_1}^2}{1-x^2},$$
and 
$$\psi: x\longmapsto m_1\log\frac{x}{r_{\lambda_1}}.$$
The function $\psi$ is concave, while $\varphi$ is convex and $\psi(r_{\lambda_1})=\varphi(r_{\lambda_1})=0$. Moreover 
$$\varphi{'}(r_{\lambda_1})=\psi{'}(r_{\lambda_1})=\sqrt{m_1(m_1+\alpha+\varepsilon)}$$
thus the two functions touch smoothly at $x=r_{\lambda_1}$, and $\psi\le \varphi$
on $(0,r_{\lambda_1}]$. As a consequence $h_0\leq h_1$ in $D(\lambda_1,r_{\lambda_1})$.

Now we construct $h_2$ in the same way as before. We have $h_1=v_1+U_0$, so we can write
\begin{align*}
h_1(z)&=m_{2}\log\frac{|\varphi_{\lambda_{2}}(z)|}{r_{\lambda_2}} +\underbrace{v_1+U_0-m_{2}\log\frac{|\varphi_{\lambda_{2}}(z)|}{r_{\lambda_2}}}_{U_1}\\
&= m_{2}\log\frac{|\varphi_{\lambda_{2}}(z)|}{r_{\lambda_2}}+U_1,
\end{align*}
where, since $f/(\varphi_{\lambda_1}^{m_1}\varphi_{\lambda_2}^{m_2})$ is
holomorphic, $U_1$ is a subharmonic function that is harmonic in a small neighborhood of $\lambda_2$. 
Again we modify the term $m_{2}\log\frac{|\varphi_{\lambda_{2}}(z)|}{r_{\lambda_2}}$ in the hyperbolic disc $D(\lambda_2,r_{\lambda_2})$, and set
\begin{equation*}
v_2(z)= \left\lbrace\begin{array}{lll} \frac{\alpha+\varepsilon}{2}\log\frac{1-r_{\lambda_2}^{2}}{1-|\varphi_{\lambda_{2}}(z)|^2}, ~z\in D(\lambda_2,r_{\lambda_2}) \\ \\ m_{2}\log\frac{|\varphi_{\lambda_{2}}(z)|}{r_{\lambda_2}}, ~~~~\mbox{ otherwise}.
\end{array}\right.
\end{equation*}
And as in the first step, set $h_2=v_2+U_1$.\\
  Iterating this procedure we obtain a sequence of sub-harmonic functions $(h_n)_n$, such that for every $z\in\mathbb{D}$, the sequence $(h_n(z))_n$ is increasing and 
$$\log|f(z)|\leq h_n(z)\leq s(z).$$
So the pointwise limit $h$ of the sequence $(h_n)_n$, which is still  subharmonic on $\mathbb{D}$, is comprised between $\log|f|$ and $s(z)$.

Observe that $h$ has been obtained from $\log|f|$ by replacing around each zero $\lambda$
of $f$ the function $m\log|\varphi_{\lambda}|/r_{\lambda}$ by $\frac{\alpha+\varepsilon}{2}
\log(1-r_{\lambda}^2)/(1-|\varphi_{\lambda}|^2)$ and by a harmonic function far from the zeros so that the laplacian of $\log|f|$ is given by the sum of the laplacians of $v_k$.
\\

Since $ h(z)\leq s(z)$, by Green's formula (\cite[Theorem 3.6, p. 59]{HKZ00}), for $0\leq r< 1$
\begin{align*}
\int_{D(0,r)}\Delta h(z)\log \frac{r}{|z|}dm(z)&=-2 h(0)+\frac{1}{r\pi}\int_{|z|=r}h(z) d|z|\\
&\leq 2\left(-h(0)+\log\|f\|_{\alpha,\infty}+\frac{\alpha}{2}\log\frac{1}{1-r^2}\right).
\end{align*}
On the other hand, 
\begin{equation*}
\int_{D(0,r)}\Delta h(z)\log \frac{r}{|z|}dm(z)\geq 2(\alpha+\varepsilon) \int_{D(0,r)} \sum_{k\geq 1}\chi_{D_{\lambda_k}}(z)\log \frac{r}{|z|}d\V(z).
\end{equation*}
Hence,
\begin{equation*}
\int_{D(0,r)} \sum_{k\geq 1}\chi_{D_{\lambda_k}}(z)\log \frac{r}{|z|}d\V(z)\leq \frac{\alpha}{2(\alpha+\varepsilon)}\log\frac{1}{1-r^2}+O(1), \qquad r\longrightarrow 1,
\end{equation*}
as required.
\end{proof}

We are now in a position to prove the uniqueness result.
\begin{theorem}\label{thmcovering}
Let $\alpha>0$, $\varepsilon>0$, and $X=\{(\lambda,m_\lambda)\}_{\lambda\in\Lambda}$ be a divisor such that
\begin{equation}\label{eqcovering}
    \bigcup_{\lambda\in\Lambda} D\left(\lambda,\sqrt{\frac{m_\lambda}{\alpha+\varepsilon+m_\lambda}}\right)=\U\setminus K
\end{equation}
for some compact $K\subset\U$.
Then $X$ is a uniqueness divisor for $\mathcal{A}^{\infty}_{\alpha}$.
\end{theorem}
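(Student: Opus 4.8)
The plan is to argue by contradiction, using Lemma \ref{Lem2.1} together with an elementary area computation. Suppose $X$ is not a uniqueness divisor for $\mathcal{A}^{\infty}_{\alpha}$; then by definition $X$ is a zero divisor, so Lemma \ref{Lem2.1} applies and yields
\begin{equation*}
\int_{D(0,r)}\sum_{k\geq1}\chi_{D_{\lambda_k}}(z)\log\frac{r}{|z|}\,d\V(z)\leq\frac{\alpha}{2(\alpha+\varepsilon)}\log\frac{1}{1-r^2}+O(1),\qquad r\to1,
\end{equation*}
where $D_{\lambda_k}=D(\lambda_k,r_{\lambda_k})$ with $r_{\lambda_k}=\sqrt{m_k/(m_k+\alpha+\varepsilon)}$ --- precisely the radii occurring in the covering hypothesis \eqref{eqcovering}. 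The idea is that \eqref{eqcovering} forces a lower bound for the left-hand side that grows strictly faster in $r$, which is incompatible with the above because $\alpha/(\alpha+\varepsilon)<1$ when $\varepsilon>0$.

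First I would record the model computation
\begin{equation*}
\int_{D(0,r)}\log\frac{r}{|z|}\,d\V(z)=\frac12\log\frac{1}{1-r^2},
\end{equation*}
which is elementary: pass to polar coordinates, substitute $u=\rho^2$ to reduce to $\frac12\int_0^{r^2}\log(r^2/u)(1-u)^{-2}\,du$, and integrate by parts (the logarithmic singularities at the origin cancel). Next, \eqref{eqcovering} says exactly that $\sum_{k\geq1}\chi_{D_{\lambda_k}}\geq\chi_{\U\setminus K}$ pointwise on $\U$, so, since $\log(r/|z|)\geq0$ on $D(0,r)$,
\begin{equation*}
\int_{D(0,r)}\sum_{k\geq1}\chi_{D_{\lambda_k}}(z)\log\frac{r}{|z|}\,d\V(z)\geq\int_{D(0,r)}\log\frac{r}{|z|}\,d\V(z)-\int_{K}\log\frac{1}{|z|}\,d\V(z)=\frac12\log\frac{1}{1-r^2}+O(1),
\end{equation*}
where the $K$-integral is a finite constant independent of $r$ because $K$ is compact, hence $\log(1/|z|)\in L^1(K,d\V)$ and $\V(K)<\infty$, and we used $\log(r/|z|)\leq\log(1/|z|)$ for $r\leq1$.

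Combining the two displays gives $\frac12\log\frac{1}{1-r^2}\leq\frac{\alpha}{2(\alpha+\varepsilon)}\log\frac{1}{1-r^2}+O(1)$; dividing by $\log\frac{1}{1-r^2}$ and letting $r\to1$ forces $\frac12\leq\frac{\alpha}{2(\alpha+\varepsilon)}$, contradicting $\varepsilon>0$. Hence no nonzero $f\in\mathcal{A}^{\infty}_{\alpha}$ can vanish to order $m_\lambda$ at every $\lambda\in\Lambda$, i.e.\ $X$ is a uniqueness divisor. There is no real obstacle here: all the substance is in Lemma \ref{Lem2.1}, and the only point needing mild care is the uniform $O(1)$ control of $\int_K\log(r/|z|)\,d\V$ as $r\to1$, which is immediate from the compactness of $K$ as indicated above.
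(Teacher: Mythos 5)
Your proof is correct and follows essentially the same route as the paper: a contradiction via Lemma \ref{Lem2.1}, the exact computation $\int_{D(0,r)}\log\frac{r}{|z|}\,d\V(z)=\frac12\log\frac{1}{1-r^2}$, and the observation that the covering hypothesis makes $\sum_k\chi_{D_{\lambda_k}}\ge\chi_{\U\setminus K}$ with the $K$-contribution absorbed into $O(1)$. The paper's decomposition into $1+[\sum\chi_{D_\lambda}-1]$ over $K$ and $\U\setminus K$ is just a cosmetic variant of your pointwise inequality, so nothing further is needed.
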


Before giving the proof of this result, we mention that Theorem \ref{thmcovering2} easily
follows from this. Indeed, suppose $X$ {\it is} a zero divisor in $\mathcal{A}^2_{\alpha}
\subset \mathcal{A}^{\infty}_{\alpha+2}$, then \eqref{eqcovering} does not hold for any
compact $K$ where $\alpha$ is replaced by $\alpha+2$ as required
in Theorem \ref{thmcovering2}.

\begin{proof}
By contradiction, suppose $X$ is a zero divisor for $\mathcal{A}^{\infty}_{\alpha}$. Recall $ D_\lambda= D\left(\lambda,\sqrt{\frac{m_\lambda}{\alpha+\varepsilon+m_\lambda}}\right)$. By Lemma \ref{Lem2.1},
\begin{equation}\label{SumGrowth}
    c+\frac{\alpha}{2(\alpha+\varepsilon)}\log\frac{1}{1-r^2}\geq\int_{D(0,r)}\sum_{\lambda\in\Lambda}\chi_{ D_\lambda}(z)\log\frac{r}{|z|}\frac{dm(z)}{(1-|z|^2)^2}.
\end{equation}
Hence
\begin{align*}
 &   \int_{D(0,r)}\sum_{\lambda\in\Lambda}\chi_{ D_\lambda}(z)\log\frac{r}{|z|}d\V(z)
 =\int_{D(0,r)}\log\frac{r}{|z|}d\V(z)\\
  &  +\int_{D(0,r)\cap K} \left[\sum_{\lambda\in\Lambda} \chi_{ D_\lambda}(z)-1\right]\log\frac{r}{|z|}d\V(z)+\int_{D(0,r)\setminus K} \left[\sum_{\lambda\in\Lambda}\chi_{ D_\lambda}(z)-1\right]\log\frac{r}{|z|}d\V(z).
\end{align*}
Notice that
\begin{enumerate}[label=(\alph*)]
    \item when $z\in D(0,r)\setminus K$, then $\sum_{\lambda\in\Lambda}\chi_{ D_\lambda}(z)-1\ge 0$,
    \item  integration inside $K$ only contributes at most as an additive (negative) constant,
\end{enumerate}
so that
\[
  \int_{D(0,r)}\sum_{\lambda\in\Lambda}\chi_{ D_\lambda}(z)\log\frac{r}{|z|}d\V(z)
 \ge  \int_{D(0,r)}\log\frac{r}{|z|}d\V(z)+O(1)=\frac{1}{2}\log\frac{1}{1-r^2}+O(1).
\]
But this is in contradiction with \eqref{SumGrowth} which concludes the proof.
\end{proof}

We should stop here for a little observation. In the case of the Fock space, it was
enough to consider the critical radius in order to get the corresponding uniqueness result.
This was related to the observation that when we cover the whole plane by disks we will
necessarily encounter "big'' overlaps of these disks however far we are from
the origin, and that in this case $\log R/|z|$ can be arbitrarily big. The situation changes
in the unit disk where $\log R/|z|$ tends to zero, and much more subtle overlap conditions
have to be discussed. For the purpose of our discussions here it is sufficient to play on the 
parameter $\varepsilon$. For that reason we have to impose a covering condition with smaller radii involving $\varepsilon$ (equivalently the critical radius gives the uniqueness result in all weighted Bergman spaces $\mathcal{A}^{\infty}_{\alpha'}$ with weight $\alpha'>\alpha$). It is easy to check that for given $\varepsilon>0$ there exist $C_2>C_1>0$ (actually $C_2>\varepsilon>C_1>0$), such that for every $m_{\lambda}>C_2$,
\begin{equation}\label{CompCond}
 \frac{m_{\lambda}-C_2}{m_{\lambda}+\alpha}\le \frac{m_{\lambda}}{m_{\lambda}+\alpha+\varepsilon}\le \frac{m_{\lambda}-C_1}{m_{\lambda}+\alpha}.
\end{equation}

\bigskip

{ {We finish this section with the short argument leading to \eqref{Blaschke}.
Denoting by $D_e(u,r)=\{z\in\C:|z-u|<r\}$ a Euclidean disk, we have 
(see \cite[p.4]{Gar06})
\begin{eqnarray}
 D\left(\lambda,t \right)=D_{e} \left( \frac{ 1-t_\lambda^2 }{1-t_\lambda^2|\lambda|^2}\lambda,  \frac{ 1-|\lambda|^2 }{1-t_\lambda^2|\lambda|^2} t_{\lambda }\right)
\end{eqnarray}
Using the finite overlap, an area argument shows that
\begin{equation}\label{areaArgument}
\sum_{\lambda\in\Lambda} \left(\frac{ 1-|\lambda|^2 }{1-t_{\lambda }^2|\lambda|^2} t_{\lambda }\right)^2 \lesssim \pi^2,
\end{equation}
where 
\[
 t_{\lambda}=\sqrt{\frac{m_{\lambda}-C}{m_{\lambda}+\alpha}}.
\]
According to \eqref{CompCond}, playing on $C$, this is comparable to the same expression with
$\varepsilon$ in the denominator, but no constant subtracted in the numerator. This
will allow us to treat at the same time the situation with finite overlap (coming from
sampling), or the separation condition (coming from interpolation). The value of
$C$ will not be important in our estimates.

The finite overlap implies that the Euclidean radii $R_\lambda$ tend to zero when $\lambda$ approaches the boundary $\partial\mathbb{D}$:
\begin{equation}\label{smallradi}
R_\lambda=\frac{ 1-|\lambda|^2 }{1-t_{\lambda }^2|\lambda|^2} t_{\lambda }, \to 0, \quad\text{as} \quad |\lambda|\to 1.
\end{equation}
Since $t_{\lambda }=\sqrt{\frac{m_{\lambda}- C}{m_{\lambda}+\alpha}}\sim 1$, we have from \eqref{areaArgument}
 \[ \Big( \frac{ 1-|\lambda|^2}{1-t_{\lambda}^2|\lambda|^2} \Big)^{2} \to 0, \ \text{as} \   |\lambda|\to 1.\]
On the other hand 
 \begin{eqnarray*}
\frac{ 1-|\lambda|^2 }{1-t_{\lambda }^2|\lambda|^2}&=& (1-|\lambda|^2) \frac{1}{ 1-|\lambda|^2\big( \frac{m_\lambda-C}{m_\lambda+\alpha} \big) }     
=   (1-|\lambda|^2) \frac{1}{ 1-|\lambda|^2 \big(1- \frac{C+\alpha}{m_\lambda+\alpha} \big) }     
\\
&=&  \frac{1}{ 1+\big( \frac{(C+\alpha)|\lambda|^{2}}{(m_\lambda+1)(1-|\lambda|^2) } \big) } \to 0, \ \text{as} \   |\lambda|\to 1. \\
\end{eqnarray*}
Necessarily
 \[ (m_\lambda+1)(1-|\lambda|^2) \sim  m_\lambda(1-|\lambda|^2)  \to 0, \ \text{as} \  |\lambda|\to 1,\]
and thus
\[
 \frac{ 1-|\lambda|^2 }{1-t_{\lambda }^2|\lambda|^2}\simeq m_\lambda(1-|\lambda|^2),
\] 
which in view of \eqref{areaArgument} leads to \eqref{Blaschke}.}}

\section{Sampling  and interpolation in $\Bergman$}

\subsection{Local $L^2$-estimates}

We will obtain the results {\it via} a local control of the $\Bergman$-functions. Let us consider the $L^2$-norm in a domain $\Omega$, denoted by $\|\cdot\|_\Omega$. With this norm, we can obtain a control of the norm of the basis elements in terms of the regularized $\beta$-function.

For $\text{Re\;} a >0$ and $\text{Re\;} b >0$, we define the \textit{$\beta$-function} 
\begin{equation}\label{beta}
    \beta(a,b)=\int_0^1 t^{a-1}(1-t)^{b-1}dt.
\end{equation}
Recall that 
\begin{equation}\label{beta}
\beta(a,b)=\frac{\Gamma(a)\Gamma(b)}{\Gamma(a+b)},
\end{equation}
and  so by Stirling's formula (see \cite{AS}) 
\begin{equation}\label{beta_estimate}
\beta(n+1,\alpha+1) \sim\frac{\Gamma(\alpha+1)}{n^{1+\alpha}}, 
\end{equation}
The following sharp inequality is sometimes useful (see \cite{Wg}) 
\begin{equation}\label{GammaINEqua}
\Big( \frac{x}{x+s} \Big)^{1-s} \leq \frac{\Gamma(x+s)}{x^{s}\Gamma(x)}\leq 1 , \  x>0, \ 0<s<1.
\end{equation}
 For $x\in(0,1]$, we define the \textit{incomplete $\beta$-function}
\begin{equation}\label{incompletebeta}
    \beta(x;a,b)=\int_0^x t^{a-1}(1-t)^{b-1}dt,
\end{equation}
and the \textit{regularized incomplete $\beta$-function} (or \textit{regularized $\beta$-function} for short)
\begin{equation}\label{regularizedbeta}
    I(x;a,b)=\frac{\beta(x;a,b)}{\beta(a,b)}.
\end{equation}

With these notations in mind we can compute the norm of the orthonormal basis $\{e_j\}_{j}$
on smaller disks (we will of course be interested in disks of type $D_{\lambda}$). For
this we need the following result.
\begin{lemma}\label{L2.3}
Let $\{e_j\}_j$ be the orthonormal basis for $\mathcal{A}_\alpha^2$. Then, for all $j\geq 0$ and $0<r<1$
\begin{equation*}
    \|e_j\|_{D(0,r)}^2=(\alpha+1)I(r^2;j+1,\alpha+1).
\end{equation*}
\end{lemma}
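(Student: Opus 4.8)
The plan is a direct computation, exploiting that $D(0,r)$ is a Euclidean disk centered at the origin, so the monomial $z^j$ separates in polar coordinates. Write $e_j(z)=c_jz^j$ with $c_j^2=\Gamma(j+2+\alpha)/(j!\,\Gamma(2+\alpha))$ as in \eqref{lemmaorthonormal}. Then
\[
\|e_j\|_{D(0,r)}^2=c_j^2\int_{D(0,r)}|z|^{2j}\,dA_\alpha(z),
\]
and passing to polar coordinates $z=\rho e^{i\theta}$ the angular integration is trivial, leaving a radial integral of the form $\int_0^r\rho^{2j+1}(1-\rho^2)^\alpha\,d\rho$ up to the explicit constants built into $dA_\alpha$.

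Next I would substitute $t=\rho^2$, so that $2\rho\,d\rho=dt$; this turns the radial integral into $\tfrac12\int_0^{r^2}t^j(1-t)^\alpha\,dt=\tfrac12\,\beta(r^2;j+1,\alpha+1)$ by the definition \eqref{incompletebeta} of the incomplete $\beta$-function. Collecting the factors of $\pi$, $2$ and $\alpha+1$ coming from $dA_\alpha$ and from the integration, one obtains $\int_{D(0,r)}|z|^{2j}\,dA_\alpha(z)$ as an explicit multiple of $\beta(r^2;j+1,\alpha+1)$.

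Finally I would rewrite the prefactor $c_j^2$ using the Beta--Gamma identity $\beta(j+1,\alpha+1)=\Gamma(j+1)\Gamma(\alpha+1)/\Gamma(j+\alpha+2)$ from \eqref{beta}, together with $\Gamma(2+\alpha)=(1+\alpha)\Gamma(1+\alpha)$. After this bookkeeping all the Gamma factors cancel except for the ratio $\beta(r^2;j+1,\alpha+1)/\beta(j+1,\alpha+1)$ and the stated constant, which by the definition \eqref{regularizedbeta} of the regularized incomplete $\beta$-function is exactly the claimed $(\alpha+1)\,I(r^2;j+1,\alpha+1)$. There is no real obstacle: the statement is an elementary identity, and the only point requiring care is tracking the normalizing constants of $dA_\alpha$ and of the orthonormal basis so that the two $\beta$-functions pair up correctly (a useful sanity check being $r\to1$, where the right-hand side must return $\|e_j\|_{\alpha,2}^2$). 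The actual content — and the reason the lemma is isolated — is this clean closed form, which will be fed into Stirling-type estimates of $I(r^2;j+1,\alpha+1)$ in the sequel.
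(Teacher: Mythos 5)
Your proof is correct in substance and follows exactly the paper's argument: pass to polar coordinates, substitute $t=\rho^2$ to produce the incomplete $\beta$-function, and rewrite the normalizing constant of $e_j$ via $\beta(j+1,\alpha+1)=\Gamma(j+1)\Gamma(\alpha+1)/\Gamma(j+\alpha+2)$ (the paper does the same two steps in the opposite order, first rewriting $e_n(z)=z^n/\sqrt{(1+\alpha)\beta(n+1,\alpha+1)}$ and then computing $\|z^j\|_{D(0,r)}^2$). One caveat about the bookkeeping you defer: with $\|\cdot\|_{D(0,r)}$ the $dA_\alpha$-norm restricted to $D(0,r)$ one gets $\int_{D(0,r)}|z|^{2j}\,dA_\alpha=(\alpha+1)\beta(r^2;j+1,\alpha+1)$ and hence $\|e_j\|_{D(0,r)}^2=I(r^2;j+1,\alpha+1)$, not $(\alpha+1)I(r^2;j+1,\alpha+1)$ --- precisely what your own $r\to1$ sanity check demands, since $\|e_j\|_{\alpha,2}^2=1$. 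The stray factor $\alpha+1$ sits in the statement itself (the way the lemma is used later, e.g.\ in Lemma \ref{NormDiskr}, is consistent with $\|e_j\|_{D(0,r)}^2=I(r^2;j+1,\alpha+1)$), so rather than asserting that the constants ``pair up'' to the displayed $(\alpha+1)I$, carry the computation through and note the normalization discrepancy.
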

\begin{proof}
Recall first from \eqref{lemmaorthonormal} that
\begin{equation*}
    e_n(z)=\sqrt{\frac{\Gamma(n+2+\alpha)}{n!\Gamma(2+\alpha)}}z^n,\qquad n\geq0.
\end{equation*}
Since $\Gamma(n)=(n-1)!$, and with \eqref{beta} in mind,
we can rewrite 
\[
 \beta(n+1,\alpha+1)=\frac{n!\Gamma(1+\alpha)}{\Gamma(n+2+\alpha)}=
 \frac{n!\Gamma(2+\alpha)}{(1+\alpha) \Gamma(n+2+\alpha)}, 
\]
and
\begin{equation}\label{p12*} 
    e_n(z)=\sqrt{\frac{1}{(1+\alpha)\beta(n+1,\alpha+1)}}z^n,\qquad n\geq0
\end{equation}
(which can be found also directly by computing the norm $\|z^n\|^2_{\alpha,2}$).

Now, the lemma follows  from  the following computation with the obvious 
substitution $s=t^2$,
\begin{eqnarray*}
    \|z^j\|_{D(0,r)}^2&=&(\alpha+1)\int_{D(0,r)}|z^j|^2(1-|z|^2)^{\alpha} dm(z) =2(\alpha+1)\int_0^r t^{2j+1}(1-t^2)^{\alpha}dt\\
    &=&(\alpha+1)\beta(r^2;j+1,\alpha+1).
\end{eqnarray*}
\end{proof}

For our later discussions we will thus need estimates on the regularized incomplete
$\beta$-function. 
{ 
\begin{lemma}\label{lemmabetaestimates}
Let $\alpha>-1$ and $r_m=\sqrt{\frac{m}{m+\alpha+1}}$ then
\begin{enumerate}[label=(\alph*)]
    \item For every $c>0$, there is $\varepsilon=\varepsilon(\alpha,c)>0$ such that for all $m\geq c+1$ and $n<m$
    \begin{equation*}
        I\left(\frac{m-c}{m+\alpha+1};n+1,\alpha+1\right)\geq \varepsilon.
    \end{equation*}
    \item For $t<r_m$, there exists $\varepsilon=\varepsilon(\alpha)$ such that
    \begin{equation*}
        F_{m,\alpha}(t)=\left[1-t^2\right]^{\alpha+2} \sum_{0\leq j<m}\frac{t^{2j}}{(\alpha+1)\beta(j+1,\alpha+1)}\geq\varepsilon.
    \end{equation*}
    \item Given $0<\eta<1$, there exists $a_\alpha(\eta)>0$ such that for all $j\geq m\geq a_\alpha(\eta)$
    \begin{equation*}
        I\left(\frac{m-a(\eta)}{m+\alpha+1};j+1,\alpha+1\right)\leq \eta I\left(\frac{m}{m+\alpha+1};j+1,\alpha+1\right).
    \end{equation*}
\end{enumerate}
\end{lemma}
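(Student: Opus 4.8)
The plan is to reduce all of (a), (b), (c) to uniform size estimates for the regularized incomplete $\beta$-function, recalling $I(x;a,b)=\beta(x;a,b)/\beta(a,b)$ with $\beta(x;a,b)=\int_0^x t^{a-1}(1-t)^{b-1}\,dt$. I will use repeatedly that $x\mapsto I(x;a,b)$ is nondecreasing, the complementary relation $I(x;a,b)=1-I(1-x;b,a)$, the elementary bound $1-s\le e^{-s}$, the sharp $\Gamma$-ratio inequality \eqref{GammaINEqua}, and the Stirling asymptotics $n^{\alpha+1}\beta(n+1,\alpha+1)\to\Gamma(\alpha+1)$ of \eqref{beta_estimate}. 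The recurring device is a rescaling substitution that pulls out the rapidly varying power factor, leaving a remainder bounded \emph{uniformly in the parameters}; monotonicity then reduces each inequality to extreme radii, and the finitely many small values of $m$ or $n$ are handled directly.

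For (c), put $b=\frac{m-a}{m+\alpha+1}<B=\frac{m}{m+\alpha+1}$ with $a=a_\alpha(\eta)$ still to be fixed. The quotient in (c) is $\big(\int_0^{b}s^{j}(1-s)^{\alpha}\,ds\big)\big/\big(\int_0^{B}s^{j}(1-s)^{\alpha}\,ds\big)$, and substituting $s=b\sigma$ in the numerator and $s=B\sigma$ in the denominator factors it as
\[
\Big(\frac{b}{B}\Big)^{j+1}\cdot\frac{\int_0^1\sigma^{j}(1-b\sigma)^{\alpha}\,d\sigma}{\int_0^1\sigma^{j}(1-B\sigma)^{\alpha}\,d\sigma}.
\]
The second factor is at most $\sup_{0\le\sigma\le1}\big(\tfrac{1-b\sigma}{1-B\sigma}\big)^{\alpha}$, which is $1$ for $-1<\alpha\le0$ and $\big(\tfrac{1-b}{1-B}\big)^{\alpha}=\big(\tfrac{a+\alpha+1}{\alpha+1}\big)^{\alpha}$ for $\alpha\ge0$; in either case it is a constant $M_\alpha(a)$. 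Since $j\ge m$ and $\tfrac{b}{B}=1-\tfrac{a}{m}\in(0,1)$, the first factor is $\le(1-\tfrac{a}{m})^{m}\le e^{-a}$, so the quotient is $\le M_\alpha(a)e^{-a}$. As $M_\alpha(a)e^{-a}\to0$ when $a\to\infty$, one fixes $a_\alpha(\eta)$ with $M_\alpha(a_\alpha(\eta))e^{-a_\alpha(\eta)}\le\eta$.

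For (a), note $m\mapsto\frac{m-c}{m+\alpha+1}$ is increasing, so it suffices to treat the smallest admissible $m$: for the finitely many $n<c$ this is the integer $m_0\ge c+1$, yielding finitely many quantities $I\big(\tfrac{m_0-c}{m_0+\alpha+1};n+1,\alpha+1\big)>0$ whose minimum is positive; for $n\ge c$ it is $m=n+1$, and by the complementary relation $I\big(\tfrac{n+1-c}{n+\alpha+2};n+1,\alpha+1\big)=1-I(Y_n;\alpha+1,n+1)$ with $Y_n=\tfrac{\alpha+c+1}{n+\alpha+2}\in(0,1)$, so one needs $I(Y_n;\alpha+1,n+1)$ bounded away from $1$ uniformly. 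Rescaling $s=Y_n\sigma$ and using $1-s\le e^{-s}$,
\[
\int_0^{Y_n}s^{\alpha}(1-s)^{n}\,ds\le Y_n^{\alpha+1}\int_0^1\sigma^{\alpha}e^{-nY_n\sigma}\,d\sigma=\frac{\gamma(\alpha+1,nY_n)}{n^{\alpha+1}}\le\frac{\gamma(\alpha+1,\alpha+c+1)}{n^{\alpha+1}},
\]
with $\gamma$ the lower incomplete $\Gamma$-function and $nY_n<\alpha+c+1$. As $n^{\alpha+1}\beta(\alpha+1,n+1)\to\Gamma(\alpha+1)$ while $\gamma(\alpha+1,\alpha+c+1)<\Gamma(\alpha+1)$ \emph{strictly}, the ratio $I(Y_n;\alpha+1,n+1)=\beta(Y_n;\alpha+1,n+1)/\beta(\alpha+1,n+1)$ is $\le1-\delta(\alpha,c)$ for all large $n$, and the remaining finitely many $n\ge c$ each give a value $<1$; the minimum over the exceptional cases together with $\delta$ yields $\varepsilon(\alpha,c)>0$. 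The essential point — and the reason (a) is the delicate part — is that this strictness cannot be relaxed: any cruder bound (replacing $\gamma$ by $\Gamma$, or $\beta$ by its order of magnitude) could be $\ge1$ and hence worthless.

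For (b), write $F_{m,\alpha}(t)=(1-t^2)^{\alpha+2}\sum_{0\le j<m}c_j t^{2j}$ with $c_j=\frac{1}{(\alpha+1)\beta(j+1,\alpha+1)}=\binom{j+\alpha+1}{j}$. A telescoping computation using $(j+1)c_{j+1}=(j+\alpha+2)c_j$ gives
\[
\frac{d}{dx}\Big[(1-x)^{\alpha+2}\sum_{0\le j<m}c_jx^{j}\Big]=-(m+\alpha+1)c_{m-1}x^{m-1}(1-x)^{\alpha+1}<0\qquad(0<x<1),
\]
and since $(m+\alpha+1)c_{m-1}=1/\beta(\alpha+2,m)$ this shows $F_{m,\alpha}(t)=I(1-t^2;\alpha+2,m)$; in particular $F_{m,\alpha}$ is decreasing in $t$, so $\inf_{0<t<r_m}F_{m,\alpha}(t)=F_{m,\alpha}(r_m)=I\big(\tfrac{\alpha+1}{m+\alpha+1};\alpha+2,m\big)$. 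With $Z_m=\tfrac{\alpha+1}{m+\alpha+1}$ one has $\int_0^{Z_m}s^{\alpha+1}(1-s)^{m-1}\,ds\ge(1-Z_m)^{m-1}\tfrac{Z_m^{\alpha+2}}{\alpha+2}$, where $(1-Z_m)^{m-1}=\big(\tfrac{m}{m+\alpha+1}\big)^{m-1}\ge e^{-(\alpha+1)}$, while $\beta(\alpha+2,m)\le C_\alpha m^{-(\alpha+2)}$ by \eqref{beta_estimate}; combining, $F_{m,\alpha}(r_m)\ge c_\alpha\big(\tfrac{m}{m+\alpha+1}\big)^{\alpha+2}\ge c_\alpha(\alpha+2)^{-(\alpha+2)}=:\varepsilon(\alpha)>0$ for every $m\ge1$. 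The main obstacle throughout is uniformity in the (possibly unbounded) multiplicities, which is exactly what the rescaling substitutions and the sharp $\Gamma$/$\beta$ asymptotics supply; part (a) is the one demanding the most care.
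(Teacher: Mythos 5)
Your proposal is correct, and it follows a genuinely different route from the paper's proof. For (a) the paper reduces to $m\ge n$ by monotonicity and then lower-bounds $\beta\bigl(\tfrac{n-c}{n+\alpha+1};n+1,\alpha+1\bigr)$ directly, splitting into the cases $\alpha>0$ and $\alpha\in(-1,0]$ (pulling out $(1-t)^{\alpha}$, respectively integrating by parts) and comparing with \eqref{beta_estimate} and \eqref{GammaINEqua}; you instead pass to the complementary tail via $I(x;a,b)=1-I(1-x;b,a)$ and dominate it by a lower incomplete Gamma function after rescaling, so the case split disappears and the uniform bound comes from the strict inequality $\gamma(\alpha+1,\alpha+c+1)<\Gamma(\alpha+1)$. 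For (b) the paper only uses the kernel identity to write $F_{m,\alpha}=1-R_{m,\alpha}$ and then bounds the tail $R_{m,\alpha}$ asymptotically (integral comparison, incomplete Gamma, $o_m(1)$ bookkeeping, large $m$ treated separately from small $m$); your telescoping computation gives the exact identity $F_{m,\alpha}(t)=I(1-t^2;\alpha+2,m)$, from which monotonicity and a one-line estimate at $t=r_m$ give a bound uniform in all $m\ge1$ with no asymptotics --- this is cleaner and even slightly stronger in presentation. For (c) the paper again argues by integration by parts in two cases according to the sign of $\alpha$, while your rescaling $s=b\sigma$, $s=B\sigma$ isolates the factor $(b/B)^{j+1}\le e^{-a}$ and bounds the remaining ratio by a constant $M_\alpha(a)$ of polynomial growth, reaching the same conclusion with less computation. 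Two small remarks, neither a genuine gap: in (a) your reduction of the worst case to $m=n+1$ uses that $m$ is an integer (as it is in all applications, $m=m_\lambda$); for real $m>n$ one should reduce to $m=n$ as the paper does, and your Gamma-function estimate goes through verbatim with $Y_n=\tfrac{\alpha+c+1}{n+\alpha+1}$. In (b), \eqref{beta_estimate} is stated as an asymptotic, so to get the uniform bound $\beta(\alpha+2,m)\le C_\alpha m^{-(\alpha+2)}$ for all $m\ge1$ you should add the (trivial) observation that $m^{\alpha+2}\beta(\alpha+2,m)$ is positive and continuous in $m$ and converges, hence bounded; the paper glosses over the same point.
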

}
The lemma does not appeal to a specific zero divisor. Later on, $m$ will correspond to
$m_{\lambda}$ and $r_m$ to $r_{{\lambda}}$. Compare the critical radius $r_m$ appearing
in this lemma with the one given in \eqref{CritValInfty} (as mentioned there, the critical value corresponds to the situation when $\varepsilon=0$): the term $\alpha$ appearing there turns into $\alpha+1$ here. 
\begin{proof}
(a) Since $\beta(x,n+1,\alpha+1)$ is increasing in $x$, for $m>n$
$$ I\left(\frac{m-c}{m+\alpha+1};n+1,\alpha+1\right)\geq I\left(\frac{n-c}{n+\alpha+1};n+1,\alpha+1\right).$$
We need to treat two cases.\\
{ 
\underline{First, suppose $\alpha>0$}, then } 
\begin{eqnarray*}
\beta\left(\frac{n-c}{n+\alpha+1};n+1,\alpha+1\right)&=&\int_{0}^{\frac{n-c}{n+\alpha+1}}t^{n}(1-t)^{\alpha}dt\\
&\geq &\Big(1- \frac{n-c}{n+\alpha+1}\Big)^\alpha \int_{0}^{\frac{n-c}{n+\alpha+1}}t^{n}dt\\\
&\geq & \Big(\frac{\alpha+1+c}{n+\alpha+1}\Big)^\alpha \frac{1}{n+1} \Big(1-\frac{\alpha+1+c}{n+\alpha+1}\Big)^{n+1},
 \end{eqnarray*}
which is comparable to $\displaystyle \frac{1}{(n+1)^{(\alpha+1)}}$ for $n\ge c+1$ 
(uniformly in $n$ and $m$). Now,
By \eqref{beta_estimate}, this yields inequality (a) at least when $n\ge c+1$.\\

\underline{Second, suppose $\alpha\in (-1,0]$}. Note that for $r\in (0,1), n\geq 0$ we have 
\begin{eqnarray}
    \int_0^{r}t^n(1-t)^\alpha dt&=& \frac{r^{n+1}}{n+1} (1- r)^{\alpha}  + \frac{\alpha}{n+1}
     \int_0^{r}t^{n+1}(1-t)^{\alpha-1} dt \nonumber\\
     &\geq &\frac{r^{n+1}}{n+1}(1-r)^\alpha+ \frac{\alpha}{n+1}\frac{r}{1-r} 
     \int_0^{r}t^{n}(1-t)^{\alpha} dt, \nonumber
\end{eqnarray}
where we have also used the fact that $\alpha r/(1-r)$ is decreasing in $r$ since $\alpha<0$.
Hence
\begin{equation}\label{A0}
  \int_0^{r}t^n(1-t)^\alpha dt \geq \frac{1}{1 +\frac{-\alpha}{n+1}\frac{r}{1-r}} \frac{r^{n+1}}{n+1}(1-r)^\alpha.
  \end{equation}
Let $r=\frac{n-c }{n+\alpha+1 }$ (the interesting case being $n\geq c+1$). In this situation we have 
\[ 1+\frac{-\alpha}{n+1}\times\frac{r}{1-r}= 1+\frac{-\alpha}{\alpha+1+c}\times\frac{n-c}{n+1}\leq 1+\frac{-\alpha}{\alpha+1+c}. \]
Thus, from \eqref{A0}
\begin{eqnarray*}
  \int_0^{ \frac{n-c }{n+\alpha+1 }  }t^n(1-t)^\alpha dt & \geq & \frac{1}{ [ 1+\frac{-\alpha}{\alpha+1+c}]} \times \frac{1}{(n+1)} \times \left( \frac{\alpha+1+c}{ n+\alpha+1}  \right)^{\alpha}\times  \left( \frac{n-c}{ n+\alpha+1}  \right)^{n+1}\\
    & \geq & \frac{ (\alpha+1+c)^{\alpha} e^{-(\alpha+1+c)(2+c)}}{[ 1+\frac{-\alpha}{\alpha+1+c}] } \times \frac{1}{\big(n+1\big)^{\alpha+1} }.    
\end{eqnarray*}
at the second inequality we have used $\frac{t-1}{t}\leq \log t , \ t>0$. 

On the other hand, by \eqref{GammaINEqua} we get
\begin{eqnarray*}
\frac{1}{\beta(n+1,\alpha+1)}& = &\frac{\Gamma(n+1+\alpha+1)}{\Gamma(\alpha+1) \Gamma(n+1)} 
\geq  \frac{\big( n+1\big)^{\alpha+1}}{\Gamma(\alpha+1) }.
\end{eqnarray*}
Put 
\[\varepsilon=\varepsilon(\alpha,c):=  \frac{1}{\Gamma(\alpha+1) } \times \frac{ (\alpha+1+c)^{\alpha} e^{-(\alpha+1+c)(2+c)}}{[ 1+\frac{-\alpha}{\alpha+2+c}] }. \]
Hence it follows from the inequalities above
\begin{equation}
I\left(\frac{n-c}{n+\alpha+2};n+1,\alpha+1\right)=\frac{  \int_0^{ \frac{n-c }{n+\alpha+2 }  }t^n(1-t)^\alpha dt }{\beta(n+1,\alpha+1)}\geq \varepsilon,
\end{equation}
which is the desired result.\\
 Finally, for $n<c+1$, independently whether $\alpha\geq 0$ or $\alpha\in (-1,0)$, the desired estimate follows from the fact that the integration interval $[0,\displaystyle \frac{m-c}{m+\alpha+1}]$ of the
incomplete $\beta$-function contains a fixed interval
$[0,\displaystyle \frac{1}{c+\alpha+2}]$ and both powers of $t$ and $(1-t)$ appearing
in the definition of $\beta(x;n+1,\alpha+1)$ are
controlled. Dividing by $\beta(n+1,\alpha+1)$ does not change this control since
$n$ is bounded.\\

(b)  Recall that the reproducing kernel of $\Bergman$,  $K(t,t)=(1-t^2)^{-\alpha-2}$  satisfies also
$$K(t,t)= \sum_{j\geq 0}\frac{t^{2j}}{(\alpha+1)\beta(j+1,\alpha+1)},\qquad 0\leq  t<1$$
Hence, it suffices to prove that 
 there exists $\epsilon=\epsilon(\alpha)$ and  $m_0$ such that for  $t<r_m$
$$R_{m,\alpha}(t)=(1-t^2)^{\alpha+2} \sum_{j\geq m}\frac{t^{2j}}{(\alpha+1)\beta(j+1,\alpha+1)} \le 1-\epsilon,\qquad m\geq m_0, $$
By  \eqref{beta_estimate}, 
\[
 \frac{1}{(\alpha+1)\beta(j+1,\alpha+1)}
 = \frac{1}{(\alpha+1)\Gamma(\alpha+1)/j^{\alpha+1}(1+o_j(1))}
 = \frac{j^{1+\alpha}}{\Gamma(2+\alpha)}(1+o_j(1)),
\]
where $o_j(1)$ tends to zero when $j$ tends to infinity.
So
\begin{eqnarray*}
R_{m,\alpha}(t)=\frac{(1-t^2)^{\alpha+2} }{\Gamma(\alpha+2)}\sum_{j\geq m} j^{1+\alpha}t^{2j}(1+ o_j(1))
\le\frac{(1-t^2)^{\alpha+2} }{\Gamma(\alpha+2)}(1+o_m(1))\sum_{j\geq m} j^{1+\alpha}t^{2j}.
\end{eqnarray*}
We will pass to an integral. For that, note that when $x\in [j-1,j]$
and $t\in [0,1)$, we have $t^{2j}\le t^{2x}$,
and obviously $j^{1+\alpha}=x^{1+\alpha}(1+o_j(1))$.
We deduce
\begin{eqnarray*}
R_{m,\alpha}(t)
&\le&(1+o_m(1))
 \frac{(1-t^2)^{\alpha+2} }{\Gamma(\alpha+2)}\int_{m-1}^{\infty} x^{1+\alpha}t^{2x}dx\\
&=&(1+o_m(1))\frac{(1-t^2)^{\alpha+2} }{\Gamma(\alpha+2)(\log 1/ t^2)^{2+\alpha}}\int_{(m-1)  (\log{1}/{t^2}) }^{\infty} u^{1+\alpha}e^{-u}du.
\end{eqnarray*}
Note that the function $t\longmapsto (1-t^2)/\ln (1/t^2)$ is bounded by $1$ on $(0,1)$.
Also, observe that $0< t<r_m=\dst \sqrt{\frac{m}{m+\alpha+1}}$, and hence
\[
 \log\frac{1}{t^2}>\log\frac{m+\alpha+1}{m}=\log(1+\frac{\alpha+1}{m+\alpha+1})
 =\frac{\alpha+1}{m+\alpha+1}(1+o_m(1)).
\] 
We deduce
\begin{eqnarray*}
R_{m,\alpha}(t)
&\leq& \frac{1+o_m(1)}{\Gamma(\alpha+2)}
 \int_{\frac{(\alpha+1)(m-1)}{m+\alpha+1}(1+o_m(1))}^{\infty} u^{1+\alpha}e^{-u}du.
\end{eqnarray*}
Let $\beta=\frac{(\alpha+1)(m-1)}{m+\alpha+1}(1+o_m(1))$, then using
the notation $\Gamma(a,b):=\int_b^\infty t^{a-1}e^{-t}dt$ for the incomplete Gamma function, we have
\[
 R_{m,\alpha}(t)\le \frac{1+o_m(1)}{\Gamma(\alpha+2)}\Gamma(\alpha+2,\beta).
\]
Now observe that for finite $m$ the estimate in (b) is trivially true. We can thus assume
that $m$ is big enough so that $\beta\ge \alpha+1-1=\alpha$.
So 
\[
 R_{m,\alpha}(t)\le \frac{1+o_m(1)}{\Gamma(\alpha+2)}\Gamma(\alpha+2,\alpha).
\]
Since $\alpha$ is fixed, we obviously have 
$q=\Gamma(\alpha+2,\alpha)/\Gamma(\alpha+2)<1$,
and for sufficiently big $m$ we have $1+o_m(1)<(1+q)/(2q)$ implying that
$R_{m,\alpha}\le (1+q)/2<1$, and we can pick $\varepsilon =(1-q)/2$.\\
\\  

(c) 
Clearly, setting
\[
 \rho=\frac{m-a(\eta)}{m+\alpha+1},\quad r=\frac{m}{m+\alpha+1},
\]
it is enough to show the estimate for the incomplete $\beta$-function:
\begin{eqnarray*}
\beta\left(\frac{m-a(\eta)}{m+\alpha+1};j+1,\alpha+1\right)
 &=&\int_0^{\rho}t^j(1-t)^{\alpha}dt\\
&\le& \eta \int_0^{r}t^j(1-t)^{\alpha}dt\\
&=&\eta \beta
\left(\frac{m}{m+\alpha+1};j+1,\alpha+1\right)
\end{eqnarray*}

\medskip

First \underline{assume $\alpha>0$}.

Note that
\begin{equation}\label{EqI}
    \beta_2:=\int_0^{r}t^j(1-t)^\alpha dt \geq (1-r)^\alpha  \frac{r^{j+1}}{j+1},
\end{equation}
and an integration by part, as well as the the fact that $t/(1-t)$ is increasing in $t$, yield
\begin{eqnarray*}
    \int_0^{\rho}t^j(1-t)^\alpha dt&=& \frac{\rho^{j+1}}{j+1} (1- \rho)^{\alpha}  + \frac{\alpha}{j+1}
     \int_0^{\rho}t^{j+1}(1-t)^{\alpha-1} dt \nonumber\\
     &\leq &\frac{\rho^{j+1}}{j+1}(1-\rho)^\alpha+ \frac{\alpha}{j+1}\frac{\rho}{1-\rho}
     \int_0^{\rho}t^{j}(1-t)^{\alpha} dt. 
\end{eqnarray*}
So,  if $\frac{\alpha}{j+1}\frac{\rho}{1-\rho}<1$, we get 
\begin{equation}\label{EqII}
 \beta_1:= \int_0^{\rho}t^j(1-t)^\alpha dt \leq \frac{1}{1 -\frac{\alpha}{j+1}\frac{\rho}{1-\rho}} \frac{\rho^{j+1}}{j+1}(1-\rho)^\alpha.
  \end{equation}
Given $0<\eta<1$ and $a>0$, 
it follows from \eqref{EqI} and \eqref{EqII}

\begin{eqnarray}\label{p16*}
\frac{\beta_1}{\beta_2}
\le \frac{1}{1 -\frac{\alpha}{j+1}\frac{\rho}{1-\rho}}\left(\frac{1-\rho}{1-r}\right)^{\alpha}
 \left(\frac{\rho}{r}\right)^{j+1}
\end{eqnarray}
Simple computations show
\[
 \frac{\rho}{r}=\frac{m-a(\eta)}{m}=1-\frac{a(\eta)}{m}<1,
\]
and
\[
 \frac{1-\rho}{1-r}=\frac{\alpha+1+a(\eta)}{\alpha+1} 
\quad 
\text{ and }\quad
 \frac{\rho}{1-\rho}=\frac{m-a(\eta)}{\alpha+1+a(\eta)}.
\]

Now for $j\ge m\ge a(\eta)$, 
\[
 \frac{\alpha}{j+1}\frac{\rho}{1-\rho} =
\frac{\alpha}{j+1}\frac{m-a(\eta)}{\alpha+1+a(\eta)}
 \le\frac{\alpha m}{(m+1)(\alpha+1)+(m+1)a(\eta)}\le \frac{\alpha}{\alpha+1+a(\eta)},
\]
therefore,
\[
\frac{1}{1-\frac{\alpha}{j+1}\frac{m-a(\eta)}{\alpha+1+a(\eta)}}
 \le \frac{\alpha+1+a(\eta)}{1+a(\eta)}.
\]

It remains the term
\begin{equation}\label{EqIII}
\left(\frac{\rho}{r}\right)^{j+1}= \left(1-\frac{a(\eta)}{m}\right)^{j+1}
 \le \left(1-\frac{a(\eta)}{m}\right)^{m}\le e^{-a(\eta)}
\end{equation}
Hence
\[
 \frac{\beta_1}{\beta_2}\le \frac{\alpha+1+a(\eta)}{1+a(\eta)}
\times \left(1+\frac{a(\eta)}{1+\alpha}\right)^{\alpha}
 \times e^{-a(\eta)}. 
\]
Since the exponential decrease dominates the polynomial growth, we have
\[
 \lim_{a\to+\infty}\frac{\alpha+1+a}{1+a}\times \Big(1+\frac{a}{1+\alpha}\Big)^{\alpha+1} \times e^{-a}=0,
\]
which proves the claim: for every $\eta>0$, we can find $a$, such that 
$\beta_1/\beta_2<\eta$ independently on $j\ge m\ge a$, and the same is true
for $I_1/I_2$.\\

\underline{ Second, assume $\alpha\in (-1,0]$}.
The exact same arguments as in the case $\alpha>0$ 
allow to reverse the
inequalities in \eqref{EqI} and \eqref{EqII} 
(since $\alpha$ is negative, the expression $\alpha r/(1-r)$ is decreasing  
and $(1-r)^{\alpha}$ is increasing in $r$). Hence, 

\begin{equation}\label{A}
  \beta_2:=\int_0^{r}t^j(1-t)^\alpha dt \geq \frac{1}{1 +\frac{-\alpha}{j+1}\frac{r}{1-r}} \frac{r^{j+1}}{j+1}(1-r)^\alpha,
  \end{equation}
and 
\begin{equation}\label{B}
    \beta_1:=\int_0^{\rho}t^j(1-t)^\alpha dt \leq  \ (1-\rho)^\alpha  \frac{\rho^{j+1}}{j+1}.
\end{equation}

As in \eqref{p16*} for $\alpha>0$, it follows from \eqref{A} and \eqref{B}
\begin{equation*}
\frac{\beta_1}{\beta_2}
\leq \Big(1 -\frac{\alpha}{j+1}\frac{r}{1-r}\Big)\left(\frac{1-\rho}{1-r}\right)^{\alpha}
 \left(\frac{\rho}{r}\right)^{j+1},
\end{equation*}
where the first factor is now in the numerator instead of the denominator.
Hence, with $\rho=\frac{m-a(\eta)}{m+\alpha+1} $ and $r=\frac{m}{m+\alpha+1}$ in mind, we get as before
\begin{equation*}
\frac{\beta_1}{\beta_2} \leq \left[   1 +\frac{-\alpha}{j+1} \times\frac{m}{\alpha+1} \right]
\times \left( 1+\frac{a(\eta)}{\alpha+1} \right)^{\alpha} \times 
 \left( 1-\frac{a(\eta)}{m}\right)^{j+1} .
\end{equation*}
Since $j+1>m$ and $\alpha<0$, we get 
\[
1 +\frac{-\alpha}{j+1} \times\frac{m}{\alpha+1}\leq 1 +\frac{-\alpha}{ \alpha+1}
 =\frac{1}{1+\alpha}. 
\]
Again
\[
 \left(1-\frac{a(\eta)}{m}\right)^{j+1}\leq e^{-a},
\]
and now
\[
 \left( 1+\frac{a(\eta)}{\alpha+1} \right)^{\alpha}<1,
\]
so that
\[
\frac{\beta_1}{\beta_2} \leq \frac{1}{1+\alpha} e^{-a},
\]
which again goes to 0, proving the claim also in this case.
 \end{proof}

As mentioned earlier, we will need to switch form the orthonormal basis $\{e_n\}_{n}$ in $\mathcal{A}^2_{\alpha}$ to an orthonormal basis on a smaller disk $D(0,r)$. The following lemma recalls this
simple fact.
\begin{lemma}\label{NormDiskr}
Let $f=\sum_{n\ge 0}a_ne_n\in \Bergman$. 
Then
\[
 (\alpha+1) 
\int_{D(0,r)}|f(z)|^2(1-|z|^2)^{\alpha}dm(z)
 =
\sum_{n\ge 0}I(r^2,n+1,\alpha+1)|a_n|^2, 
\] for every $r\in (0,1)$
\end{lemma}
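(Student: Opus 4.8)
The plan is to expand $f$ in the orthonormal basis, integrate term by term over the disk $D(0,r)$, and invoke Lemma~\ref{L2.3}. First I would write $f=\sum_{n\ge 0}a_ne_n$ with convergence in $\Bergman$, and recall that by the pointwise growth estimate \eqref{Croissance} this convergence is automatically uniform on compact subsets of $\U$; in particular the partial sums $f_N=\sum_{n\le N}a_ne_n$ converge to $f$ uniformly on $\overline{D(0,r)}$, hence also in $L^2\bigl(D(0,r),(1-|z|^2)^{\alpha}dm\bigr)$, the weight $(1-|z|^2)^{\alpha}$ being bounded above and below by positive constants on $\overline{D(0,r)}$. Therefore it suffices to prove the identity for each polynomial $f_N$ and then let $N\to\infty$. (For $\alpha\ge 0$ one could alternatively just use $(1-|z|^2)^{\alpha}\le 1$ and convergence in $\Bergman$ directly, but the compact-subset argument handles all $\alpha>-1$ at once.)

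For a fixed $N$, I would use $e_n(z)=c_nz^n$ with $c_n=\bigl((\alpha+1)\beta(n+1,\alpha+1)\bigr)^{-1/2}$, see \eqref{p12*}, together with polar coordinates: for $n\neq m$,
\[
\int_{D(0,r)}z^n\overline{z^m}(1-|z|^2)^{\alpha}\,dm(z)=\frac{1}{\pi}\int_0^{2\pi}e^{i(n-m)\theta}\,d\theta\int_0^r\rho^{n+m+1}(1-\rho^2)^{\alpha}\,d\rho=0,
\]
so all cross terms vanish and
\[
(\alpha+1)\int_{D(0,r)}|f_N(z)|^2(1-|z|^2)^{\alpha}dm(z)=\sum_{n\le N}|a_n|^2\,(\alpha+1)\int_{D(0,r)}|e_n(z)|^2(1-|z|^2)^{\alpha}dm(z)=\sum_{n\le N}|a_n|^2\,\|e_n\|_{D(0,r)}^2 .
\]
Applying Lemma~\ref{L2.3}, which gives $\|e_n\|_{D(0,r)}^2=(\alpha+1)I(r^2;n+1,\alpha+1)$, this becomes $\sum_{n\le N}I(r^2;n+1,\alpha+1)|a_n|^2$.

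Finally I would let $N\to\infty$: the left-hand side tends to $(\alpha+1)\int_{D(0,r)}|f|^2(1-|z|^2)^{\alpha}dm$ by the $L^2(D(0,r))$ convergence established in the first step, while the right-hand side is a nondecreasing series of nonnegative terms (note $0\le I(r^2;n+1,\alpha+1)\le 1$), hence convergent, and the two limits agree, which is the asserted identity. The only genuinely delicate point in this argument is the interchange of sum and integral, i.e.\ the passage from $f_N$ to $f$; this is exactly why I isolate the uniform-convergence-on-compacts consequence of \eqref{Croissance} at the outset, after which everything reduces to the orthogonality of monomials on circles and Lemma~\ref{L2.3}.
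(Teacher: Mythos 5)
Your proof is correct and follows essentially the same route as the paper's: both arguments rest on the orthogonality of the monomials on the centered disk $D(0,r)$ together with Lemma \ref{L2.3}, the paper phrasing this as Parseval (Pythagoras) for the renormalized orthonormal system $e_n/\sqrt{I(r^2;n+1,\alpha+1)}$ on $D(0,r)$. The only difference is that you make explicit the limiting step --- convergence of the partial sums in $L^2$ of the small disk, obtained from the growth estimate \eqref{Croissance} --- which the paper's one-line proof leaves implicit.
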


\begin{proof}
Set
\[
 f_n=\frac{e_n}{\sqrt{I(r^2,n+1,\alpha+1)}},
\]
then, by Lemma (\ref{L2.3}), $(f_n)$ is orthonormal with respect to the measure
$dA_{\alpha}$ on $D(0,r)$. Writing now
\[
 f(z)=\sum_{n\ge 0} a_ne_n=\sum_{n\ge 0}(a_n \sqrt{I(r^2,n+1,\alpha+1)})f_n,
\]
we get the required equality.

\end{proof}

\begin{lemma}\label{lemmacontrolnorm}
Let $f\in\Bergman$, for every $c>0$ there exist  constants $A=A(c)>0$ such that for every $m\geq c+1$, we have 
\begin{equation*}
    \sum_{j<m}\left|\langle f,T_\lambda e_j\rangle\right|^2\leq A(\alpha+1)\int_{ D\left(\lambda,\sqrt{\frac{m-c}{m+\alpha+1}}\right)}|f(z)|^2(1-|z|^2)^\alpha dm(z),
\end{equation*}
for any $\lambda\in\U$.
\end{lemma}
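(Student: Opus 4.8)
The plan is to transfer the problem from $f$ to $g:=T_\lambda f$ and then read off the estimate from Lemma~\ref{NormDiskr} together with the lower bound of Lemma~\ref{lemmabetaestimates}(a). First I would use that $T_\lambda$ is a self-adjoint (involutive) isometry of $\Bergman$: writing $g=T_\lambda f=\sum_{n\ge 0}a_ne_n$ we get
\[
 \langle f,T_\lambda e_j\rangle=\langle T_\lambda f,e_j\rangle=a_j,\qquad j\ge 0,
\]
so that $\sum_{j<m}|\langle f,T_\lambda e_j\rangle|^2=\sum_{j<m}|a_j|^2$; the left-hand side is thus nothing but a partial sum of Taylor coefficients of $g$.

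Next, set $r=\sqrt{\frac{m-c}{m+\alpha+1}}$. Since $m\ge c+1$, Lemma~\ref{lemmabetaestimates}(a) provides $\varepsilon=\varepsilon(\alpha,c)>0$ with $I(r^2;n+1,\alpha+1)\ge\varepsilon$ for every $n<m$. Applying Lemma~\ref{NormDiskr} to $g$ and discarding the terms with $n\ge m$,
\[
 (\alpha+1)\int_{D(0,r)}|g(z)|^2(1-|z|^2)^\alpha\,dm(z)
 =\sum_{n\ge 0}I(r^2;n+1,\alpha+1)|a_n|^2
 \ge\varepsilon\sum_{n<m}|a_n|^2,
\]
hence $\sum_{j<m}|\langle f,T_\lambda e_j\rangle|^2\le\frac{\alpha+1}{\varepsilon}\int_{D(0,r)}|g(z)|^2(1-|z|^2)^\alpha\,dm(z)$.

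Finally I would rewrite the last integral back in terms of $f$. From $g(z)=[\varphi_\lambda'(z)]^{\frac{2+\alpha}{2}}f(\varphi_\lambda(z))$, the identity $1-|\varphi_\lambda(z)|^2=|\varphi_\lambda'(z)|(1-|z|^2)$, and the involutive change of variable $w=\varphi_\lambda(z)$ (with real Jacobian $|\varphi_\lambda'(z)|^2$), one checks the invariance
\[
 |g(z)|^2(1-|z|^2)^\alpha\,dm(z)=|f(w)|^2(1-|w|^2)^\alpha\,dm(w),
\]
while $\varphi_\lambda$ sends $D(0,r)$ onto $D(\lambda,r)$ (it maps $0$ to $\lambda$ and preserves the pseudohyperbolic distance). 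This yields
\[
 \sum_{j<m}|\langle f,T_\lambda e_j\rangle|^2\le\frac{\alpha+1}{\varepsilon}\int_{D(\lambda,r)}|f(w)|^2(1-|w|^2)^\alpha\,dm(w),
\]
i.e.\ the claim with $A=A(c)=1/\varepsilon(\alpha,c)$. There is no genuinely hard step: all the arithmetic content sits in Lemma~\ref{lemmabetaestimates}(a), and the only thing requiring care is the bookkeeping in the M\"obius change of variables together with the fact that $\varphi_\lambda(D(0,r))=D(\lambda,r)$, both of which are routine.
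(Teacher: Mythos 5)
Your proof is correct and follows the paper's argument: transfer to $g=T_\lambda f$ via the self-adjoint isometry, apply Lemma~\ref{NormDiskr}, and invoke the lower bound of Lemma~\ref{lemmabetaestimates}(a), with $A=1/\varepsilon(\alpha,c)$. The only difference is that you spell out the M\"obius change of variables and the identity $\varphi_\lambda(D(0,r))=D(\lambda,r)$, which the paper leaves implicit when it says the statement ``can be rewritten'' in terms of $g$ on $D(0,r)$.
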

\begin{proof}
Set $r=r_{m,\alpha,c}:=\sqrt{\frac{m-c}{m+\alpha+1}}$.
Taking $g=T_\lambda f$, the statement can be rewritten as
\begin{equation*}
    \sum_{0\leq j<m}\left|\langle g, e_j\rangle\right|^2 \lesssim(\alpha+1)\int_{ D(0,r)}
|g(z)|^2(1-|z|^2)^\alpha dm(z).
\end{equation*}
Since $g\in\Bergman$, 
\begin{equation*}
    g(z)=\sum_{j\geq0}a_j e_j=\sum_{j\geq0}a_j\frac{z^j}{\sqrt{(\alpha+1)\beta(j+1,\alpha+1)}}
\end{equation*}
and
\begin{equation*}
    \left|\langle g, e_j\rangle\right|=|a_j|^2, \ j\geq 0.
\end{equation*}
Using Lemma \ref{NormDiskr} applied to $g$, the claim is equivalent to
\begin{equation*}
    \sum_{0\leq j<m}|a_j|^2
\lesssim 
    \sum_{j\geq0}|a_j|^2 I\left(\frac{m-c}{m+\alpha+1};j+1,\alpha+1\right),
\end{equation*}
where we have substituted back the value of $r$.
Therefore it is enough to prove
\begin{equation*}
    I\left(\frac{m-c}{m+\alpha+1};j+1,\alpha+1\right)\geq\varepsilon(\alpha,c)>0
\end{equation*}
for $j<m$ and $m\geq c+1$, but this is given by  Lemma \ref{lemmabetaestimates}(a).
\end{proof}

The next lemma relates the finite overlap condition to a kind of Carleson measure type condition.

\begin{lemma}\label{lemmafoc}
	Let $X=\{(\lambda,m_\lambda)\}_{\lambda\in\Lambda}$ be a divisor. Then $X$ satisfies the finite overlap condition if and only if there exists a constant $C>0$ satisfying
	\begin{equation*}
	\sum_{\lambda\in\Lambda} \sum_{j<m_\lambda} |\langle f,T_\lambda e_j\rangle|^2\leq C\|f\|_{\alpha,2}^2,\quad f \in\Bergman.
	\end{equation*}
\end{lemma}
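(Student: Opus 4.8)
The plan is to prove both implications by reducing, via the isometry $T_\lambda$, to a local $L^2$-estimate on the disk $D(0,r_{m_\lambda})$ with $r_m=\sqrt{m/(m+\alpha+1)}$, and then comparing the sum of characteristic functions of the disks $D_\lambda=D(\lambda,r_{m_\lambda})$ with the area of the unit disk. For the implication (finite overlap $\Rightarrow$ Carleson-type inequality), I would start from Lemma \ref{lemmabetaestimates}(b): for $f\in\Bergman$ and $g=T_\lambda f$, writing $g=\sum a_j e_j$ and using \eqref{p12*}, one has by Parseval and Lemma \ref{NormDiskr} that $\sum_{j<m_\lambda}|\langle f,T_\lambda e_j\rangle|^2=\sum_{j<m_\lambda}|a_j|^2$. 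The key point is to bound this in the opposite direction to Lemma \ref{lemmacontrolnorm}, i.e.\ to show
\[
\sum_{j<m_\lambda}|\langle f,T_\lambda e_j\rangle|^2\lesssim (\alpha+1)\int_{D_\lambda}|f(z)|^2(1-|z|^2)^\alpha\,dm(z),
\]
but with the \emph{critical} radius $r_{m_\lambda}$ itself (not a shrunken one). This is exactly where Lemma \ref{lemmabetaestimates}(b) enters: one checks that $F_{m,\alpha}(t)\geq\varepsilon$ on $(0,r_m)$ gives, after integrating $|g|^2(1-|z|^2)^\alpha$ over $D(0,r_{m_\lambda})$ and using the pointwise identity $\sum_{j<m}t^{2j}/((\alpha+1)\beta(j+1,\alpha+1))=F_{m,\alpha}(t)(1-t^2)^{-\alpha-2}$, the lower bound $\gtrsim\sum_{j<m}|a_j|^2\beta(j+1,\alpha+1)^{-1}\cdot(\text{tail of a geometric-type series})$; more carefully, one integrates in polar coordinates and uses $\|e_j\|_{D(0,r)}^2=(\alpha+1)I(r^2;j+1,\alpha+1)$ from Lemma \ref{L2.3}, so the real content is that $I(r_m^2;j+1,\alpha+1)$ is bounded \emph{below} uniformly for $j<m$ — which is Lemma \ref{lemmabetaestimates}(a) with $c=0$, and $F_{m,\alpha}$ handles the needed uniformity. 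Once this local estimate holds, I sum over $\lambda\in\Lambda$:
\[
\sum_{\lambda\in\Lambda}\sum_{j<m_\lambda}|\langle f,T_\lambda e_j\rangle|^2\lesssim(\alpha+1)\sum_{\lambda\in\Lambda}\int_{D_\lambda}|f|^2(1-|z|^2)^\alpha\,dm=(\alpha+1)\int_\U|f(z)|^2\Big(\sum_{\lambda\in\Lambda}\chi_{D_\lambda}(z)\Big)(1-|z|^2)^\alpha\,dm(z),
\]
and the finite overlap condition bounds $\sum_\lambda\chi_{D_\lambda}(z)\leq S_X$ pointwise, giving $\leq S_X\|f\|_{\alpha,2}^2$.

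For the converse (Carleson-type inequality $\Rightarrow$ finite overlap), the plan is to test the inequality on the normalized reproducing kernels. Fix $z_0\in\U$ and take $f=k_{z_0}=K_{z_0}/\|K_{z_0}\|_{\alpha,2}$, so $\|f\|_{\alpha,2}=1$. For each $\lambda$ with $z_0\in D_\lambda$, I want a lower bound $\sum_{j<m_\lambda}|\langle k_{z_0},T_\lambda e_j\rangle|^2\gtrsim$ a constant independent of $\lambda$ and $z_0$. Since $T_\lambda$ is a self-adjoint isometry and $T_\lambda K_{z_0}$ is a constant multiple of $K_{\varphi_\lambda(z_0)}$ (the standard covariance of the Bergman kernel under Möbius maps), one computes $\sum_{j<m_\lambda}|\langle k_{z_0},T_\lambda e_j\rangle|^2$ as $\|f\|^2_{\mathcal{A}^2_\alpha/N_{\lambda,m_\lambda}^{2,\alpha}}$, which by the orthogonal-projection interpretation in \eqref{p5*} equals $\|P_\lambda k_{z_0}\|^2$; a direct computation reduces this to $\sum_{j<m_\lambda}|c_j|^2 w^{2j}(1-|w|^2)^{\alpha+2}/((\alpha+1)\beta(j+1,\alpha+1))$ up to normalization, where $w=\varphi_\lambda(z_0)$ so that $|w|=\rho(\lambda,z_0)<r_{m_\lambda}$. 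By Lemma \ref{lemmabetaestimates}(b), $F_{m_\lambda,\alpha}(|w|)\geq\varepsilon$, which is precisely the statement that this quantity is $\geq\varepsilon$. Therefore $\varepsilon\cdot\#\{\lambda:z_0\in D_\lambda\}\leq\sum_\lambda\sum_{j<m_\lambda}|\langle k_{z_0},T_\lambda e_j\rangle|^2\leq C$, so $S_X\leq C/\varepsilon<\infty$.

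The main obstacle is the first (sufficiency) direction: one must get the local upper bound for $\sum_{j<m_\lambda}|\langle f,T_\lambda e_j\rangle|^2$ in terms of the integral over $D_\lambda$ with the \emph{exact critical radius}, i.e.\ replacing the shrunken radius $\sqrt{(m-c)/(m+\alpha+1)}$ of Lemma \ref{lemmacontrolnorm} by $r_m$. This is not immediate from Lemma \ref{lemmacontrolnorm} (which needs $c>0$), and it is exactly the reason Lemma \ref{lemmabetaestimates}(b) was isolated: the function $F_{m,\alpha}$ encodes the uniform lower bound of the truncated reproducing-kernel sum on $(0,r_m)$, which is the sharp endpoint estimate. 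Handling $\alpha\in(-1,0]$ versus $\alpha>0$ may again require the case distinction already visible in the proof of Lemma \ref{lemmabetaestimates}, but no genuinely new idea beyond invoking part (b) of that lemma. The second direction is comparatively routine once the kernel computation is set up, the only care being the correct normalization constants in the covariance formula $T_\lambda K_{z_0}=\overline{(\varphi_\lambda'(z_0))^{(\alpha+2)/2}}\,K_{\varphi_\lambda(z_0)}$ and tracking that the $(1-|w|^2)^{\alpha+2}$ factor matches the definition of $F_{m,\alpha}$.
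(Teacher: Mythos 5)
Your proposal is correct and follows essentially the same route as the paper: the converse direction by testing the inequality on the normalized kernels ($T_z1$, equivalently $k_z$) and invoking the uniform lower bound $F_{m_\lambda,\alpha}\geq\varepsilon$ of Lemma \ref{lemmabetaestimates}(b), and the direct direction by the local estimate plus summation against the overlap constant $S_X$. The only remark is that the ``main obstacle'' you flag is not one: since $D\left(\lambda,\sqrt{\tfrac{m-c}{m+\alpha+1}}\right)\subset D\left(\lambda,\sqrt{\tfrac{m}{m+\alpha+1}}\right)$, Lemma \ref{lemmacontrolnorm} already yields the bound with the critical radius by monotonicity of the integral (equivalently, monotonicity of $I(x;j+1,\alpha+1)$ in $x$), which is exactly how the paper argues.
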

\begin{proof}
	Suppose that the estimate holds. Given $z\in\U$, set $f_{z}=T_z 1$, observe that $|T_z 1|=|k_z|$,
	so  $\|f_z\|_{\alpha,2}=1$ and
	\begin{align}
	|\langle f_{z}, T_\lambda e_j\rangle|^2&=|\langle T_z1,T_\lambda e_j\rangle|^2=|\langle e_0,T_zT_{\lambda}e_j\rangle|^2=|T_zT_{\lambda}e_j(0)|^2\nonumber\\
	&=\frac{1}{(\alpha+1)\beta(j+1,\alpha+1)}\left[1-|\varphi_\lambda(z)|^2\right]^{2+\alpha}|\varphi_\lambda(z)|^{2j} \label{eqTlambda},
	\end{align}
where we have used the form \eqref{p12*} of $e_j$.
	Then, by assumption,
	\begingroup
	\addtolength{\jot}{1em}
	\begin{align*}
	1=\|f\|^2_{\alpha,2}&\gtrsim\sum_{\lambda\in\Lambda}\sum_{j<m_{\lambda}}\frac{1}{(\alpha+1)\beta(j+1,\alpha+1)}\left[1-|\varphi_\lambda(z)|^{2}\right]^{2+\alpha}|\varphi_\lambda(z)|^{2j} \\
	&\geq \sum_{\lambda\in\Lambda}F_{m_\lambda,\alpha}(|\varphi_{\lambda}(z)|)\chi_{ D\left(\lambda,\sqrt{\frac{m_\lambda}{m_\lambda+\alpha+1}}\right)}(z)\geq \epsilon\sum_{\lambda\in\Lambda}\chi_{ D\left(\lambda,\sqrt{\frac{m_\lambda}{m_\lambda+\alpha+1}}\right)}(z),
	\end{align*}
	\endgroup
	where the function $F$ is defined in Lemma \ref{lemmabetaestimates}(b) and the last bound also comes from that result. Since the constants do not depend on $z$, we conclude that
	\begin{equation*}
	1\gtrsim\sum_{\lambda\in\Lambda}\chi_{ D\left(\lambda,\sqrt{\frac{m_\lambda}{m_\lambda+\alpha+1}}\right)}(z).
	\end{equation*}
  In the opposite direction, if $X$ satisfies the finite overlap condition, we just apply Lemma \ref{lemmacontrolnorm}, which gives
	\begin{align*}
		\sum_{\lambda\in\Lambda} \sum_{j<m_\lambda} |\langle f,T_\lambda e_j\rangle|^2&\lesssim \sum_{\lambda\in\Lambda} (\alpha+1)\int_{ D\left(\lambda,\sqrt{\frac{m_\lambda}{m_\lambda+\alpha+1}}\right)}  |f(z)|^2 (1-|z|^2)^\alpha dm(z)\\
		&=\sum_{\lambda\in\Lambda}(\alpha+1)\int_{\U} \chi_{ D\left(\lambda,\sqrt{\frac{m_\lambda}{m_\lambda+\alpha+1}}\right)}(z)|f(z)|^2(1-|z|^2)^\alpha dm(z)\\
		&\leq S_X\|f\|^2_{\alpha,2},
	\end{align*}
where $S_X$ is the overlap constant introduced in Definition (\ref{FiniteOvl}).

\end{proof}

\subsection{Sampling for $\mathcal{A}_\alpha^2$}

In order to obtain our geometric conditions we need the following local control of $\Bergman$-functions.
\begin{lemma}\label{lemmamenorradio}
	Given $0<\eta\leq 1$ there exists $a(\eta)>0$ such that if $f\in\Bergman$, and $m\geq a(\eta)$, and if
	\begin{align*}
	\sum_{j<m}|\langle f, e_j\rangle|^2&\leq\eta/2,\\
	(\alpha+1)\int_{ D\left(0,\sqrt{\frac{m}{m+\alpha+1}}\right)}|f(z)|^2&(1-|z|^2)^\alpha dm(z)\leq1,
	\end{align*}
	then
	\begin{equation*}
	(\alpha+1)\int_{ D\left(0,\sqrt{\frac{m-a(\eta)}{m+\alpha+1}}\right)}|f(z)|^2 (1-|z|^2)^\alpha dm(z)\leq\eta.
	\end{equation*}
\end{lemma}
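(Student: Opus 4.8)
The plan is to push everything down to the coefficient side via the orthonormal basis $\{e_n\}$ and then quote the decay estimate Lemma~\ref{lemmabetaestimates}(c) for the regularized incomplete $\beta$-function. Write $f=\sum_{n\ge 0}a_ne_n\in\Bergman$, so that $\langle f,e_j\rangle=a_j$. By Lemma~\ref{NormDiskr}, for any $0<r<1$,
\[
 (\alpha+1)\int_{D(0,r)}|f(z)|^2(1-|z|^2)^{\alpha}dm(z)=\sum_{n\ge 0}I(r^2;n+1,\alpha+1)\,|a_n|^2.
\]
Specializing to $r^2=\tfrac{m}{m+\alpha+1}$ and to $r^2=\tfrac{m-a(\eta)}{m+\alpha+1}$, the two hypotheses and the desired conclusion become, respectively,
\[
 \sum_{j<m}|a_j|^2\le \frac{\eta}{2},\qquad \sum_{n\ge 0}I\!\left(\tfrac{m}{m+\alpha+1};n+1,\alpha+1\right)|a_n|^2\le 1,
\]
and the goal is to show $\sum_{n\ge 0}I\!\left(\tfrac{m-a(\eta)}{m+\alpha+1};n+1,\alpha+1\right)|a_n|^2\le\eta$.

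Next I would split the target sum at the index $n=m$. For $n<m$ one uses only that $\beta(x;\cdot,\cdot)\le\beta(\cdot,\cdot)$, hence $I(x;n+1,\alpha+1)\le 1$ for every $x\in(0,1]$; together with the first hypothesis this gives $\sum_{n<m}I(\cdots)|a_n|^2\le\sum_{n<m}|a_n|^2\le\eta/2$. For the tail $n\ge m$ this is exactly where the comparison estimate enters: set $a(\eta):=a_\alpha(\eta/2)$ from Lemma~\ref{lemmabetaestimates}(c) (legitimate since $\eta/2<1$), so that for every $n\ge m\ge a(\eta)$,
\[
 I\!\left(\tfrac{m-a(\eta)}{m+\alpha+1};n+1,\alpha+1\right)\le \frac{\eta}{2}\,I\!\left(\tfrac{m}{m+\alpha+1};n+1,\alpha+1\right).
\]
Multiplying by $|a_n|^2$, summing over $n\ge m$, and bounding $\sum_{n\ge m}I(\tfrac{m}{m+\alpha+1};n+1,\alpha+1)|a_n|^2\le\sum_{n\ge 0}(\cdots)\le 1$ by the second hypothesis (all terms are nonnegative), one gets $\sum_{n\ge m}I(\cdots)|a_n|^2\le\eta/2$. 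Adding the two ranges gives the bound $\eta$, which, read back through Lemma~\ref{NormDiskr}, is the asserted inequality. The degenerate case $m=a(\eta)$ is trivial, since then the inner disk is a point.

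As for where the difficulty lies: there is essentially none left in this lemma itself — it is a bookkeeping step. All the analytic content, namely that shrinking the radius from $r_m$ to $\sqrt{(m-a)/(m+\alpha+1)}$ contracts $I(\cdot;j+1,\alpha+1)$ by a factor $\le\eta/2$ \emph{uniformly in $j\ge m$}, has already been extracted in Lemma~\ref{lemmabetaestimates}(c), whose proof is the genuinely delicate one (comparing incomplete $\beta$-functions and exploiting that $(\rho/r)^{j+1}\le e^{-a}$ dominates the polynomial factors). The only two points needing a moment's care here are: (i) splitting at precisely $n=m$, so that the low part is absorbed by the hypothesis on $\sum_{j<m}|a_j|^2$ while the high part is handled by Lemma~\ref{lemmabetaestimates}(c), which only speaks about indices $j\ge m$; and (ii) feeding the parameter $\eta/2$ (not $\eta$) into Lemma~\ref{lemmabetaestimates}(c), so that each of the two pieces contributes at most $\eta/2$ and the totals match.
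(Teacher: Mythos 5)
Your proof is correct and follows essentially the same route as the paper: expand $f$ in the basis, use Lemma \ref{NormDiskr} to pass to the coefficient side, split at $n=m$, absorb the low indices by the hypothesis $\sum_{j<m}|a_j|^2\le\eta/2$, and handle the tail with Lemma \ref{lemmabetaestimates}(c) applied with parameter $\eta/2$ (so $a(\eta)=a_\alpha(\eta/2)$), exactly as the paper does.
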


As we will see from the proof, we have $a(\eta)=a_{\alpha}(\eta/2)$ where $a_{\alpha}$
appears in Lemma \ref{lemmabetaestimates}(c).

\begin{proof}
	If we write 
	\begin{equation*}
	f(z)=\sum_{j}a_j e_j(z),
	\end{equation*}
	by the first assumption and  \eqref{lemmaorthonormal}, using the orthogonality of $\{e_n\}_n$ with respect to the measure $dA_{\alpha}$ on any disk $D(0,r)$, $0<r<1$, and Lemma \ref{NormDiskr},
	\begin{eqnarray*}
\lefteqn{(\alpha+1)\int_{ D\left(0,\sqrt{\frac{m-a(\eta)}{m+\alpha+1}}\right)}|f(z)|^2 (1-|z|^2)^\alpha dm(z)}\\
	&=&\sum_{j\geq0}|a_j|^2 I\left({\frac{m-a(\eta)}{m+\alpha+1}};
 j+1,\alpha+1\right)\\
	&\le& \sum_{j< m}|a_j|^2 
 +\sum_{j\geq m}|a_j|^2 I\left({\frac{m-a(\eta)}{m+\alpha+1}};
 j+1,\alpha+1\right)\\
	&\le& \frac{\eta}{2}+\sum_{j\geq m} |a_j|^2 I\left(\frac{m-a(\eta)}{m+\alpha+1};j+1,\alpha+1\right).
	\end{eqnarray*}
	
Now, by Lemma \ref{lemmabetaestimates} (c) for $j\ge m \ge a_{\alpha}(\eta/2)$,
another application of Lemma \ref{NormDiskr} and the hypothesis,
	\begin{eqnarray*}
	\sum_{j\geq m} |a_j|^2 I\left(\frac{m-a(\eta/2)}{m+\alpha+1};j+1,\alpha+1\right)
	\leq\frac{\eta}{2}\sum_{j\geq m} |a_j|^2 
 I\left(\frac{m}{m+\alpha+1};j+1,\alpha+1\right)\\
	\leq(1+\alpha)\frac{\eta}{2}\int_{ D\left(0,\sqrt{\frac{m}{m+\alpha+1}}\right)}|f(z)|^2 (1-|z|^2)^\alpha dm(z)\leq \frac{\eta}{2},
	\end{eqnarray*}
	and the result follows.
\end{proof}
We are now in a position to prove Theorem \ref{ThmSamp2} which we restate here for convenience. 
\begin{theorem*} 
\begin{enumerate}[label=(\alph*)]
\item If $X$ is a sampling divisor for $\Bergman$, then $X$ satisfies the finite overlap condition  and there exists $0<C<\alpha+1$ such that
\begin{equation*}
\bigcup_{\lambda\in\Lambda}  D\left(\lambda,\sqrt{\frac{m_\lambda+C}{m_\lambda+\alpha+1}}\right)=\U.
\end{equation*}
\item Conversely, let the divisor $X$ satisfy the finite overlap condition and if there exists $C=C(S_X)>0$ such that for some compact $K$ of $\U$ we have
\begin{equation*}
\bigcup_{\lambda\in\Lambda,m_\lambda>C}  D\left(\lambda,\sqrt{\frac{m_\lambda-C}{m_\lambda+\alpha+1}}\right)=\U\setminus K,
\end{equation*}
then $X$ is a sampling divisor for $\Bergman$.
\end{enumerate}
\end{theorem*}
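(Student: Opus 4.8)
The plan is as follows. The definition of a sampling divisor consists of two inequalities. The upper one, $\sum_{\lambda\in\Lambda}\sum_{j<m_\lambda}|\langle f,T_\lambda e_j\rangle|^{2}\lesssim\|f\|_{\alpha,2}^{2}$, is by Lemma \ref{lemmafoc} \emph{equivalent} to the finite overlap condition, so in (a) it yields $S_X<\infty$ at once, while in (b) the overlap hypothesis delivers it at once. Everything below therefore concerns the lower inequality $\|f\|_{\alpha,2}^{2}\lesssim\sum_{\lambda}\|P_{\lambda}f\|^{2}$, where $P_{\lambda}f=\sum_{j<m_\lambda}\langle f,T_\lambda e_j\rangle T_\lambda e_j$ and $\|P_{\lambda}f\|^{2}=\sum_{j<m_\lambda}|\langle f,T_\lambda e_j\rangle|^{2}$, together with the two covering conditions; recall the critical radius $r_{m}=\sqrt{m/(m+\alpha+1)}$.

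For the covering condition in (a) I would argue by contradiction: suppose that for every $C\in(0,\alpha+1)$ the disks $D(\lambda,\sqrt{(m_\lambda+C)/(m_\lambda+\alpha+1)})$ fail to cover $\U$, and pick $z_{C}$ outside all of them, so $\rho(\lambda,z_{C})\ge\sqrt{(m_\lambda+C)/(m_\lambda+\alpha+1)}$ for every $\lambda$. Since the right-hand side tends to $1$ as $C\to\alpha+1$ for each fixed $\lambda$, no subsequence of $(z_{C})$ can stay in a compact subset of $\U$, hence $|z_{C}|\to1$. Testing the sampling inequality on the normalized reproducing kernel $f=k_{z_{C}}$ (equivalently $T_{z_{C}}1$, up to a unimodular factor) and using the computation behind \eqref{eqTlambda}, one has $\|P_{\lambda}k_{z_{C}}\|^{2}=F_{m_\lambda,\alpha}(\rho(\lambda,z_{C}))$, so the lower sampling bound forces $\sum_{\lambda}F_{m_\lambda,\alpha}(\rho(\lambda,z_{C}))\ge 1/A_{0}$, $A_{0}$ the sampling constant; it then suffices to prove this sum tends to $0$ as $C\to\alpha+1$, uniformly in the choice of $z_{C}$. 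The pointwise ingredient is
\begin{equation*}
F_{m,\alpha}(t)\lesssim\bigl(m(1-t^{2})\bigr)^{\alpha+2}\qquad(m\ge1,\ 0<t<1),
\end{equation*}
which follows from $1/\bigl((\alpha+1)\beta(j+1,\alpha+1)\bigr)\lesssim(1+j)^{\alpha+1}$ (a consequence of \eqref{GammaINEqua}) and $\sum_{j<m}(1+j)^{\alpha+1}\lesssim m^{\alpha+2}$; since $z_{C}$ lies outside the disks $D(\lambda,r_{m_\lambda})$ one gets $m_\lambda(1-\rho(\lambda,z_{C})^{2})\le m_\lambda(\alpha+1-C)/(m_\lambda+\alpha+1)<\alpha+1-C$, so each term is $\lesssim(\alpha+1-C)^{\alpha+2}$. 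To add these up I would split $\Lambda$ into $\{m_\lambda\le M\}$ and $\{m_\lambda>M\}$: on the first set the disks $D(\lambda,r_{m_\lambda})$ have pseudohyperbolic radius bounded below, so finite overlap makes the restricted divisor a Carleson measure and gives $\sum_{m_\lambda\le M}(1-\rho(\lambda,z_{C})^{2})\lesssim_{S_X}1$, hence $\sum_{m_\lambda\le M}F_{m_\lambda,\alpha}(\rho(\lambda,z_{C}))\lesssim_{M,S_X}(\alpha+1-C)^{\alpha+1}\to0$; on the second set one dominates $\sum_{m_\lambda>M}\|P_{\lambda}k_{z_{C}}\|^{2}$, via Lemma \ref{lemmacontrolnorm}, by $\int_{\U}\bigl(\sum_{\lambda}\chi_{D(\lambda,\sqrt{(m_\lambda-1)/(m_\lambda+\alpha+1)})}\bigr)|k_{z_{C}}|^{2}\,dA_{\alpha}$ and exploits the fact that $|k_{z_{C}}|^{2}dA_{\alpha}$ is a probability measure concentrated near the boundary-escaping point $z_{C}$, with $\int_{\U\setminus D(z_{C},\tau)}|k_{z_{C}}|^{2}\,dA_{\alpha}=(1-\tau^{2})^{\alpha+1}$, together with the pointwise smallness above. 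This last summation is the main obstacle: disks of very large multiplicity can approach $z_{C}$ arbitrarily closely, so one must trade a fixed pseudohyperbolic neighbourhood of $z_{C}$ for the concentration of the measure $|k_{z_{C}}|^{2}dA_{\alpha}$.

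For the lower inequality in (b) I would proceed in two steps. First fix $\eta=\eta(S_X)\in(0,1)$ with $\eta S_X<1/2$, set $C=\max\{a(\eta),2\}$ with $a(\eta)$ as in Lemma \ref{lemmamenorradio}, and let $f\in\Bergman$ be non-zero. For each $\lambda$ with $m_\lambda>C$ the disk $D(\lambda,r_{m_\lambda})$ carries positive mass $s_\lambda^{2}:=\int_{D(\lambda,r_{m_\lambda})}|f|^{2}\,dA_{\alpha}>0$ (an analytic function cannot vanish on an open disk), so $g_\lambda:=T_\lambda f/s_\lambda$ satisfies $\int_{D(0,r_{m_\lambda})}|g_\lambda|^{2}\,dA_{\alpha}=1$, using that $T_\lambda$ maps $D(\lambda,r)$ onto $D(0,r)$ with $\int_{D(0,r)}|T_\lambda f|^{2}dA_{\alpha}=\int_{D(\lambda,r)}|f|^{2}dA_{\alpha}$ and $\langle g_\lambda,e_j\rangle=\langle f,T_\lambda e_j\rangle/s_\lambda$. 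Applying Lemma \ref{lemmamenorradio} to $g_\lambda$ yields the dichotomy: either $\sum_{j<m_\lambda}|\langle g_\lambda,e_j\rangle|^{2}>\eta/2$, i.e.
\begin{equation*}
\int_{D(\lambda,\sqrt{(m_\lambda-C)/(m_\lambda+\alpha+1)})}|f|^{2}\,dA_{\alpha}\le s_\lambda^{2}<\frac{2}{\eta}\|P_{\lambda}f\|^{2},
\end{equation*}
or else $\int_{D(\lambda,\sqrt{(m_\lambda-C)/(m_\lambda+\alpha+1)})}|f|^{2}\,dA_{\alpha}\le\eta\,s_\lambda^{2}$. Summing over $\{\lambda:m_\lambda>C\}$ along the covering $\U\setminus K\subset\bigcup_{m_\lambda>C}D(\lambda,\sqrt{(m_\lambda-C)/(m_\lambda+\alpha+1)})$ and bounding $\sum_\lambda\int_{D(\lambda,r_{m_\lambda})}|f|^{2}\,dA_{\alpha}=\int_{\U}\bigl(\sum_\lambda\chi_{D(\lambda,r_{m_\lambda})}\bigr)|f|^{2}\,dA_{\alpha}\le S_X\|f\|_{\alpha,2}^{2}$ by finite overlap, one obtains $\int_{\U\setminus K}|f|^{2}\,dA_{\alpha}\le\frac{2}{\eta}\sum_\lambda\|P_{\lambda}f\|^{2}+\eta S_X\|f\|_{\alpha,2}^{2}$, and, absorbing the last term using $\eta S_X<1/2$,
\begin{equation*}
\|f\|_{\alpha,2}^{2}\le 2\int_{K}|f|^{2}\,dA_{\alpha}+\frac{4}{\eta}\sum_\lambda\|P_{\lambda}f\|^{2}.
\end{equation*}

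The second step removes the compact term by a normal families argument. If $X$ were not sampling there would be $f_n\in\Bergman$ with $\|f_n\|_{\alpha,2}=1$ and $\sum_\lambda\|P_{\lambda}f_n\|^{2}\to0$; by \eqref{Croissance} the sequence is normal, so a subsequence converges locally uniformly to some $f\in\Bergman$ which, by the displayed inequality, satisfies $\int_{K}|f|^{2}\,dA_{\alpha}\ge1/4$, hence $f\not\equiv0$. Since $\|P_{\lambda}g\|^{2}=\sum_{j<m_\lambda}|\langle g,T_\lambda e_j\rangle|^{2}$ depends only on the finite jet of $g$ at $\lambda$ and is therefore continuous under local uniform convergence, Fatou gives $\sum_\lambda\|P_{\lambda}f\|^{2}\le\liminf_n\sum_\lambda\|P_{\lambda}f_n\|^{2}=0$; by \eqref{p5*} this means $f$ vanishes to order $m_\lambda$ at every $\lambda$, i.e.\ $X$ is a zero divisor. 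On the other hand $C>1$ lets one pick $\varepsilon\in(0,C-1)$, and then $D(\lambda,\sqrt{(m_\lambda-C)/(m_\lambda+\alpha+1)})\subset D(\lambda,\sqrt{m_\lambda/(m_\lambda+\alpha+2+\varepsilon)})$ for all $m_\lambda>C$ (a direct comparison of the quotients, valid since $1+\varepsilon-C<0$), so the covering hypothesis of (b) implies \eqref{eqcovering2} and Theorem \ref{thmcovering2} shows $X$ is a uniqueness divisor. This contradiction proves that $X$ is sampling, completing (b). (The hardest point of the whole argument is the summation in (a); step (b) is comparatively routine once Lemmas \ref{lemmafoc}, \ref{lemmamenorradio} and Theorem \ref{thmcovering2} are in hand.)
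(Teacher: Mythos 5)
Your treatment of part (b) is correct and follows the paper's route in its essentials: the upper sampling bound is Lemma~\ref{lemmafoc}, the local dichotomy comes from Lemma~\ref{lemmamenorradio} applied to the normalized transplant $T_\lambda f/s_\lambda$, and the zero-divisor case is killed by Theorem~\ref{thmcovering2} via the radius comparison $\frac{m-C}{m+\alpha+1}\le\frac{m}{m+\alpha+2+\varepsilon}$ for $\varepsilon<C-1$ (whence your requirement $C>1$). Your reorganization --- first the inequality $\|f\|_{\alpha,2}^2\le 2\int_K|f|^2\,dA_\alpha+\frac{4}{\eta}\sum_\lambda\|P_\lambda f\|^2$ for all $f$, then normal families producing a limit that is simultaneously nonzero and vanishing on $X$ --- is a clean repackaging of the paper's dichotomy between a nonzero and a zero weak limit, and all the steps there (local isometry of $T_\lambda$, absorption with $\eta S_X<1/2$, continuity of the jets under locally uniform convergence, Fatou) are sound.

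Part (a), however, has a genuine gap, and it sits exactly where you flag ``the main obstacle''. The missing idea is the uniform separation estimate which is the heart of the paper's proof: if $z_C\notin D\bigl(\lambda,\sqrt{(m_\lambda+C)/(m_\lambda+\alpha+1)}\bigr)$ for every $\lambda$, then by the strong pseudohyperbolic triangle inequality, for every $\zeta\in D(\lambda,r_{m_\lambda})$ one has $\rho(z_C,\zeta)\ge\frac{r_{\lambda,C}^2-r_{m_\lambda}^2}{1-r_{m_\lambda}^2r_{\lambda,C}^2}=\frac{C(m_\lambda+\alpha+1)}{m_\lambda(2\alpha+2-C)+(\alpha+1)^2}\ge\frac{C}{2\alpha+2-C}=:\rho_C$, a bound \emph{independent of} $m_\lambda$ which tends to $1$ as $C\uparrow\alpha+1$. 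So your worry that disks of very large multiplicity can approach $z_C$ arbitrarily closely is unfounded: all critical disks stay pseudohyperbolically far from $z_C$, uniformly in the multiplicity, and no splitting into $m_\lambda\le M$ and $m_\lambda>M$ is needed. With this in hand the argument closes in one line: by Lemma~\ref{lemmacontrolnorm}, finite overlap, and the change of variables $w=\varphi_{z_C}(\zeta)$, one gets $\sum_\lambda\|P_\lambda k_{z_C}\|^2\lesssim S_X\int_{|w|\ge\rho_C}(1-|w|^2)^\alpha\,dm(w)\asymp S_X(1-\rho_C^2)^{\alpha+1}\to0$ as $C\to\alpha+1$, contradicting the lower sampling bound. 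As written, your large-multiplicity summation is not carried out, so the proof of (a) is incomplete; in addition, the estimate you invoke for the bounded multiplicities, $\sum_{m_\lambda\le M}(1-\rho(\lambda,z_C)^2)\lesssim_{S_X}1$, is false in general (for a separated or finite-overlap family only powers strictly greater than $1$, e.g.\ $(1-\rho^2)^{\alpha+2}$, are uniformly summable; the power-one sum diverges already for a pseudohyperbolic net), though that part could be repaired --- the essential missing step remains the uniform separation estimate above.
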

\begin{proof}
\textit{Necessary Part (a)}

Let $X=\{(\lambda,m_\lambda)\}_{\lambda\in\Lambda}$ be a sampling divisor. By Lemma \ref{lemmafoc}, it satisfies the finite overlap condition. Suppose that for every $0<C<\alpha+1$
\begin{equation*}
\bigcup_{\lambda\in\Lambda}  D\left(\lambda,\sqrt{\frac{m_\lambda+C}{m_\lambda+\alpha+1}}\right)\not=\U.
\end{equation*}
Then there exists a sequence $0<C_j\uparrow (\alpha+1)$ and  $z_j \in  \U$ such that
\begin{equation*}
z_j\notin\bigcup_{\lambda\in\Lambda}  D\left(\lambda,\sqrt{\frac{m_\lambda+C_j}{m_\lambda+\alpha+1}}\right). 
\end{equation*}
\\
Put 
 $   r_{\lambda, C_{j}}=\sqrt{\frac{m_{\lambda}+C_{j}}{m_{\lambda}+\alpha+1 }}$ and $r_{\lambda}=\sqrt{\frac{m_{\lambda}}{m_{\lambda}+\alpha+1 }}$. 
Thus
\begin{equation*}
z_{j}\in\mathbb{D}  \setminus \left[  \bigcup_{\lambda\in\Lambda}  D\left(\lambda,r_{\lambda, C_{j}}  \right)\right]  \subset 
\mathbb{D}\setminus \left[ \bigcup_{\lambda\in\Lambda}  D\left(\lambda, r_{\lambda}\right)   \right].
\end{equation*}
Let 
\[\zeta \in \bigcup_{\lambda\in\Lambda}  D\left(\lambda, r_{\lambda} \right)\]
 that is $\zeta\in D\big(\lambda_{0},r_{\lambda_0}\big)$ for some $\lambda_0\in\Lambda$ and we have for all $j$
  \[ \rho(\zeta,\lambda_{0})< r_{\lambda_{0}} ,\quad \text{ and } 
\quad \rho(z_j,\lambda_{0}  ) >  r_{\lambda_{0}, C_{j}} .\]
By the triangular  inequality for the pseudohyperbolic metric (see \cite[p.4]{Gar06}), we get 
\begin{eqnarray*}
\rho(z_j,\zeta)& \geq &\frac{\rho(z_j,\lambda_{0}  )-\rho(\zeta,\lambda_{0})} {1-\rho(z_j,\lambda_{0}) \rho(\zeta,\lambda_{0})} \geq \frac{  r_{\lambda_{0}, C_{j}} - r_{\lambda_{0}} } {1-  r_{\lambda_{0}, C_{j}} r_{\lambda_0}} \\
  & \geq &     \frac{ (  r_{\lambda_{0}, C_{j}})^{2} - (r_{\lambda_{0}})^{2}  } {1-( r_{\lambda_0} r_{\lambda_{0}, C_{j}} )^{2} }     
   \geq     \frac{ C_jm_{\lambda_0}+C_j(\alpha+1) }{ (2\alpha+2-C_j)m_{\lambda_0}+(\alpha+1)^2} \to 1 ,\qquad j\to \infty,
\end{eqnarray*}
and this latter convergence is uniform in $m$.
Since we  have chosen $\zeta$ arbitrary  in $ \bigcup_{\lambda\in\Lambda}  D\left(\lambda,r_\lambda \right)$, there exists a sequence $(z_j)_j\subset \mathbb{D}$ such that :
\[\rho_j:=\dist\Big(z_j, \bigcup_{\lambda\in\Lambda}  D\big(\lambda,r_{\lambda}\big)\Big) \rightarrow 1.\]

Set $f_j=T_{z_j}1$, observe that $|T_{z_j}1|=|k_{z_j}|$, and  applying  Lemma \ref{lemmacontrolnorm} we obtain
\begin{align*}
\sum_{\lambda\in\Lambda}\sum_{k<m_\lambda} |\langle f_j,T_\lambda e_k\rangle|^2&\lesssim\sum_{\lambda\in\Lambda}(\alpha+1)\int_{ D\left(\lambda,\sqrt{\frac{m_\lambda}{m_\lambda+\alpha+1}}\right)} \frac{(1-|z_j|^2)^{2+\alpha}}{|1-\overline{z_j}\zeta|^{2(2+\alpha)}}(1-|\zeta|^2)^\alpha dm(\zeta)\\
&\lesssim (\alpha+1)S_X\int_{\cup_{\lambda\in\Lambda} D\left(\lambda,\sqrt{\frac{m_\lambda}{m_\lambda+\alpha+1}}\right)}\left[1-|\varphi_{z_j}(\zeta)|^2\right]^\alpha  |\varphi_{z_j}(\zeta)|^2dm(\zeta)\\
&\lesssim(\alpha+1)\int_{|w|\geq\rho_j} (1-|w|^2)^\alpha dm(w)\rightarrow 0,
\end{align*}

since $\rho_j\to 1$. This contradicts the sampling inequality.\\

\textit{Sufficiency part (b)}

Suppose the divisor is not sampling. Then
there exists a sequence $(f_n)_{n\geq1}$ such that $\|f_n\|_{\alpha,2}=1$ and
\begin{equation*}
    \sum_{\lambda\in\Lambda}\sum_{0\leq j<m_\lambda}\left|\langle f_n,T_\lambda e_j\rangle\right|^2\rightarrow 0\quad\text{as }n\rightarrow \infty.
\end{equation*}
Passing to a weakly convergent sub-sequence denoted again by $(f_n)_{n\geq1}$ we have two possibilities: either (i) $(f_n)$ converges weakly to $f\not=0$ or (ii) $(f_n)$ converges weakly to $0$. \\
{ %
In the case (i), $X$ is a zero divisor for a function $f\in\Bergman$. 
By Theorem \ref{thmcovering2}, 
for every $\varepsilon>0$,
\[
 \U\setminus  \bigcup_{\lambda\in\Lambda} D\left(\lambda,\sqrt{\frac{m_\lambda}{\alpha+2+\varepsilon+m_\lambda}}\right)
\]

cannot be compact. Similarly as in \eqref{CompCond},
 given $\varepsilon>0$ for every $C_1>\varepsilon+1>0$ 
such that for every $m\ge 1$,
\[
 \frac{m-C_1}{m+\alpha+1}\le \frac{m}{m+\alpha+\varepsilon+2} 
\]

Hence
\[
 D\left(\lambda,\sqrt{\frac{m_\lambda-C_1}{ m_\lambda+\alpha+1}}\right)
\subset D\left(\lambda,\sqrt{\frac{m_\lambda}{m_\lambda+\alpha+2+\varepsilon }}\right),
\]
 for every $\lambda\in\Lambda$ with $ m_\lambda>C_2$. And
\[
 \U\setminus  \bigcup_{\lambda\in\Lambda, m_\lambda>C_1} D\left(\lambda,\sqrt{\frac{m_\lambda-C_1}{m_\lambda+\alpha+1}}\right)
\]
cannot be compact, leading to a contradiction.
}
\\

In the second case (ii), we define $\eta=\frac{1}{ S_X+1}$ and set the constant $C=a(\eta)$
where $a(\eta)$ is given by Lemma \ref{lemmamenorradio}. Denote
\begin{equation*}
    \Lambda_1=\left\{\lambda\in\Lambda: m_\lambda-C>0\right\}.
\end{equation*}

In order to reach a contradiction we will assume that the disks
$D\left(\lambda,\sqrt{\frac{m_\lambda-a(\eta)}{m_\lambda+\alpha+1}}\right)$,
$\lambda\in\Lambda_1$,
cover the unit disk up to a compact set, i.e.\ there is
$R=R(\eta)\in [0,1)$ such that
\begin{equation*}
    \U\setminus D(0,R)\subset\bigcup_{\lambda\in\Lambda_1} D\left(\lambda,\sqrt{\frac{m_\lambda-a(\eta)}{m_\lambda+\alpha+1}}\right).
\end{equation*}
We get for every $n\geq 1$
\[
    1=\int_\U |f_n(z)|^2dA_{\alpha}(z)
\leq\int_{D(0,R)} |f_n(z)|^2dA_{\alpha}(z)+
\sum_{\lambda\in\Lambda_1}\int_{ D\left(\lambda,\sqrt{\frac{m_\lambda-a(\eta)}{m_\lambda+\alpha+1}}\right)} |f_n(z)|^2 dA_{\alpha}(z).
\]
Denote by $\Lambda_2$ the set of $\lambda\in\Lambda_1$ such that
\begin{equation*}
    \sum_{j<m_\lambda} \left|\langle f_n,T_\lambda e_j\rangle\right|^2
\leq\frac{\eta}{2}(\alpha+1)\int_{ D\left(\lambda,\sqrt{\frac{m_\lambda}{m_\lambda+\alpha+1}}\right)}|f_n(z)|^2(1-|z|^2)^\alpha dm(z)=:\frac{\eta'}{2}.
\end{equation*}

By Lemma \ref{lemmamenorradio} applied for $\lambda\in\Lambda_2$, $\eta'$
and $f_n$ with $\|f_n\|_{\alpha,2}=1$, we obtain
\begin{align*}
    1\leq& (\alpha+1)\int_{D(0,R)} |f_n(z)|^2(1-|z|^2)^\alpha dm(z)\\
    &+\sum_{\lambda\in\Lambda_2}(\alpha+1)\int_{ D\left(\lambda,\sqrt{\frac{m_\lambda-a(\eta)}{m_\lambda+\alpha+1}}\right)} |f_n(z)|^2(1-|z|^2)^\alpha dm(z)\\
    \qquad&+\sum_{\lambda\in\Lambda_1\setminus\Lambda_2}(\alpha+1)\int_{ D\left(\lambda,\sqrt{\frac{m_\lambda-a(\eta)}{m_\lambda+\alpha+1}}\right)} |f_n(z)|^2(1-|z|^2)^\alpha dm(z)\\
    &\leq (\alpha+1)\int_{D(0,R)} |f_n(z)|^2(1-|z|^2)^\alpha dm(z)\\
    &+\eta\sum_{\lambda\in\Lambda_2}(\alpha+1)\int_{ D\left(\lambda,\sqrt{\frac{m_\lambda}{m_\lambda+\alpha+1}}\right)} |f_n(z)|^2(1-|z|^2)^\alpha dm(z)\\
    &+\sum_{\lambda\in\Lambda_1\setminus\Lambda_2}(\alpha+1)\int_{ D\left(\lambda,\sqrt{\frac{m_\lambda-a(\eta)}{m_\lambda+\alpha+1}}\right)} |f_n(z)|^2(1-|z|^2)^\alpha dm(z).
\end{align*}
For the first term on the right hand side of the last inequality, 
 the weak convergence, {\it via} dominated convergence, yields  
\begin{align*}
    (\alpha+1)\int_{D(0,R)} &|f_n(z)|^2(1-|z|^2)^\alpha dm(z) = o(1).\quad(n\rightarrow\infty).
\end{align*}

And by the definition of $\Lambda_2$
\begin{align*}
    &\sum_{\lambda\in\Lambda_1\setminus\Lambda_2}(\alpha+1)\int_{ D\left(\lambda,\sqrt{\frac{m_\lambda-a(\eta)}{m_\lambda+\alpha+1}}\right)} |f_n(z)|^2(1-|z|^2)^\alpha dm(z)\\
    &\leq \sum_{\lambda\in\Lambda_1\setminus\Lambda_2}(\alpha+1)\int_{ D\left(\lambda,\sqrt{\frac{m_\lambda}{m_\lambda+\alpha+1}}\right)} |f_n(z)|^2(1-|z|^2)^\alpha dm(z)\\
    &\leq\frac{2}{\eta}\sum_{\lambda\in\Lambda_1\setminus\Lambda_2}\sum_{j<m_\lambda} \left|\langle f_n,T_\lambda e_j\rangle\right|^2=o(1)\quad(n\rightarrow\infty)
\end{align*}
Finally,
\begin{align*}
    1&\leq o(1)+\eta \sum_{\lambda\in\Lambda}(\alpha+1)\int_{ D\left(\lambda,\sqrt{\frac{m_\lambda}{m_\lambda+\alpha+1}}\right)} |f_n(z)|^2(1-|z|^2)^\alpha dm(z)\\
    &\leq o(1)+\eta S_X (\alpha+1)\int_\U|f_n(z)|^2(1-|z|^2)^\alpha dm(z)=o(1)+\frac{S_X}{S_X+1},\quad n\rightarrow\infty.
\end{align*}
We have reached a contradiction.

\end{proof}

\subsection{Interpolation for $\mathcal{A}_\alpha^2$}
As in \cite{BHKM}, we obtain the geometric condition by a $\dbar$-scheme and a local control of the functions in the space. To do so, we adapt the same technique as in \cite{BO95}. We will need the following version of H\"{o}rmander's $L^2$-estimates for $\dbar$ due to Ohsawa \cite{Oh}. Recall the definition of the invariant laplacian
\begin{equation*}
    \widetilde{\Delta}=(1-|z|^2)^2\Delta
\end{equation*}
and define the invariant convolution of two functions $f$, $g$
\begin{equation*}
    (f\star g)(z)=\int_\U g(\varphi_z(\zeta))f(\zeta)d\V(\zeta).
\end{equation*}

\begin{theorem}{\bf{(Ohsawa)\cite{Oh}}}\label{ThmOhsawa}
Let $\psi$ be any subharmonic function in the disk such that $\widetilde{\Delta}\psi>\delta>0$. Then there is a solution $u$ to the equation $\dbar{u}=g$ such that
\begin{equation}\label{EstimOhsawa}
    \int_\U |u(z)|^2\frac{e^{-\psi(z)}}{1-|z|^2}dm(z)\leq C_\delta\int_\U|g(z)|^2e^{-\psi(z)}(1-|z|^2)dm(z).
\end{equation}
\end{theorem}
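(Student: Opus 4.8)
Equation \eqref{EstimOhsawa} is Ohsawa's version of H\"ormander's $L^{2}$-estimates for $\dbar$, tuned to the invariant geometry of $\U$, and I would derive it from the standard weighted $L^{2}$-machinery rather than reprove the underlying Bochner--Kodaira identities. The first step is to notice that the two weights occurring in \eqref{EstimOhsawa} are exactly the intrinsic $L^{2}$-norms attached to the Poincar\'e metric $\omega=\tfrac{i}{2}\,dz\wedge d\bar z/(1-|z|^{2})^{2}$: its area element $dV_{\omega}$ is a constant multiple of the invariant measure $d\V$, and a $(0,1)$-form $g\,d\bar z$ has pointwise squared norm $|g|^{2}(1-|z|^{2})^{2}$. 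Hence, setting $\phi:=\psi-\log(1-|z|^{2})$ and normalising $dV_{\omega}$ consistently on the two sides, \eqref{EstimOhsawa} is exactly the assertion that $\dbar u=g$ admits a solution with
\[
\int_{\U}|u|^{2}e^{-\phi}\,dV_{\omega}\ \le\ C_{\delta}\int_{\U}|g\,d\bar z|^{2}_{\omega}\,e^{-\phi}\,dV_{\omega},
\]
an $L^{2}$-solvability statement on the complete K\"ahler manifold $(\U,\omega)$.

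The second step is to produce such a solution, running the H\"ormander--Demailly/Donnelly--Fefferman scheme; the point is that the negative curvature of $\U$ is absorbed by the $\log(1-|z|^{2})$ built into $\phi$. Two elementary facts feed the argument. Since $\widetilde\Delta$ is linear and $\widetilde\Delta\bigl(-\log(1-|z|^{2})\bigr)=4$, one has $\widetilde\Delta\phi=\widetilde\Delta\psi+4>\delta+4$, so $i\partial\dbar\phi$ is, in terms of $\omega$, bounded below by a positive constant depending only on $\delta$; and the function $-\log(1-|z|^{2})$ has sub-unit gradient for the invariant metric, namely $|\partial(-\log(1-|z|^{2}))|^{2}_{\omega}=|z|^{2}<1$. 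The latter fact is what makes the scheme run inside the disc rather than in the plane: moving a suitable portion of the $\log(1-|z|^{2})$-term out of the weight and into a Donnelly--Fefferman twist, one obtains a solution of $\dbar u=g$ satisfying the displayed estimate with a constant $C_{\delta}$ built from the twisting constant and the lower bound for $i\partial\dbar\phi$; translating back through the dictionary of the first step then yields \eqref{EstimOhsawa}.

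The main obstacle is the quantitative bookkeeping in the second step. Because $\U$ has negative Ricci curvature, the bare Bochner--Kodaira inequality on $(\U,\omega)$ with weight $\phi$ is not positive on its own, and one must check that the $+4$ gained from $\widetilde\Delta\bigl(-\log(1-|z|^{2})\bigr)$, together with the gain afforded by the sub-unit twist, precisely overcomes the Ricci term; one must also keep track of the completeness of $\omega$, which legitimises the closed-range/density arguments behind the abstract existence theorem, and ensure that the resulting $C_{\delta}$ stays finite on the claimed range of $\delta$. This delicate accounting is the content of \cite{Oh}, so in the present paper I would simply invoke the statement in the form given above.
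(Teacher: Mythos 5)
The paper gives no proof of this statement at all: it is quoted as Ohsawa's theorem \cite{Oh} and used as a black box, which is exactly what you end up doing, so your treatment agrees with the paper's. Your preparatory dictionary is accurate --- with $\phi=\psi-\log(1-|z|^2)$ one has $|u|^2e^{-\phi}\,dV_\omega=|u|^2e^{-\psi}(1-|z|^2)^{-1}dm$, $|g\,d\bar z|_\omega^2e^{-\phi}\,dV_\omega=|g|^2e^{-\psi}(1-|z|^2)\,dm$, $\widetilde{\Delta}\phi=\widetilde{\Delta}\psi+4$ and $|\partial(-\log(1-|z|^2))|_\omega^2=|z|^2<1$ --- so the Donnelly--Fefferman sketch is a sound account of where the estimate comes from, but, like the paper, you ultimately defer the actual $L^2$-existence argument to \cite{Oh}.
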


  We need to regularize the weight in such a way that we will not destroy the interpolation after the $\dbar$-surgery. This will be achieved by Lemma \ref{TheWeightedW} below.
We recall from Lemma \ref{lemmabetaestimates} the critical radius in $\mathcal{A}^2_{\alpha}$,
$r_\lambda=\sqrt{\frac{m_\lambda}{m_\lambda+\alpha+1}}$, 
and its dilation from Theorem \ref{ThmInter2} $r_\lambda'=\sqrt{\frac{m_\lambda+C_X}{m_\lambda+\alpha+1}}$, 
and denote  the associated hyperbolic  disks
$ D_\lambda= D\left(\lambda, r_\lambda\right)$ and $ D_\lambda'= D\left(\lambda,r_\lambda'\right)$ respectively.

We will also need the following auxiliary function (see \cite[p.119]{BO95})
\begin{equation*}
    \xi(\zeta)=  \xi_{\lambda}(\zeta)=\left\{
     \begin{array}{@{}l@{\thinspace}l}
       0,  &\quad 0\leq  |\zeta|<r_{\lambda}, \\
       \frac{\log\frac{1}{|\zeta|^2}}{K(m_\lambda,c,\alpha)}, &\quad r_{\lambda}<|\zeta|<r_\lambda', \\
       0, &\quad r_\lambda'<|\zeta|<1, \\
     \end{array}
   \right.
\end{equation*}
where
\begin{equation*}
    K=K(m_\lambda,C_X,\alpha)=\int_{r_{\lambda}<|\zeta|<r_\lambda'}\log\frac{1}{|\zeta|^2}d\V(\zeta).
\end{equation*}
(so that the $L^1$-norm of $\xi$ is one, for some more precise estimates on $K$ see below). 
 Consider the weight
\begin{eqnarray}\label{p23*}
    w_{\Lambda,\alpha}(z):&=&\sum_{\lambda\in\Lambda}m_\lambda\left[\log|\varphi_\lambda(z)|^2-\frac{1}{K}\int_{r_{\lambda}<|\zeta|<r_\lambda'}\log|\varphi_{\varphi_\lambda(z)}(\zeta)|^2\log\frac{1}{|\zeta|^2}d\V(\zeta)\right]\chi_{D_\lambda'}(z)\nonumber\\
&=&\sum_{\lambda\in\Lambda}m_\lambda\left[\log|\varphi_\lambda(z)|^2-\int_{\U}\log|\varphi_{\varphi_\lambda(z)}(\zeta)|^2\xi(\zeta)d\V(\zeta)\right]\chi_{D_\lambda'}(z)
\end{eqnarray}

\begin{lemma}\label{TheWeightedW} 
Let $X=\{(\lambda,m_\lambda)\}_{\lambda\in\Lambda}$ be a divisor and let $C_X$ be such that 
$(\alpha+1)(1-e^{-1})<C_X<\alpha+1$ the hyperbolic disks  $$ \left\{ D\left(\lambda,\sqrt{\frac{m_\lambda+C_X}{m_\lambda+\alpha+1}}\right)\right\}_{\lambda\in\Lambda}$$ are pairwise disjoint. 
Then the weight $ w_{\Lambda,\alpha}$ above satisfies
\begin{enumerate}[label=(\alph*)]
    \item $w_{\Lambda,\alpha}\leq0$,
    \item $-w_{\Lambda,\alpha}\leq A(\alpha)$ in $ D_\lambda'\setminus D_{\lambda}$,
    \item and $\widetilde{\Delta}w_{\Lambda,\alpha}\geq -4(\alpha+1-\varepsilon)$
for some $\varepsilon$ depending on $C_X$.
\end{enumerate}
\end{lemma}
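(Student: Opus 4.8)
The plan is to exploit the disjointness of the dilated disks $D_\lambda'$ to reduce to a single summand, and then pass to the M\"obius coordinate $u=\varphi_\lambda(z)$, in which $w_{\Lambda,\alpha}$ becomes radial and completely explicit. Since the $\{D_\lambda'\}_{\lambda\in\Lambda}$ are pairwise disjoint by hypothesis and the $\lambda$-th summand in \eqref{p23*} is supported in $D_\lambda'$, for $z\in D_\mu'$ one has $w_{\Lambda,\alpha}(z)=w_\mu(z):=m_\mu\big[\log|\varphi_\mu(z)|^2-P_\mu(\varphi_\mu(z))\big]$, where $P_\mu(u)=\int_\U\log|\varphi_u(\zeta)|^2\,\xi_\mu(\zeta)\,d\V(\zeta)$ and $\xi_\mu$ is the function $\xi$ from the text attached to $\mu$, while $w_{\Lambda,\alpha}\equiv0$ off $\bigcup_\lambda D_\lambda'$. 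So it suffices to analyze one $w_\mu$ on $\{|u|<r_\mu'\}$. Since $\xi_\mu$ is radial, the mean value identity $\frac1{2\pi}\int_0^{2\pi}\log|\varphi_u(se^{i\theta})|^2\,d\theta=\log\big(\max(|u|,s)\big)^2$ (the factor $\log|1-\overline u\zeta|^2$, harmonic in $\zeta$ on a disk containing $\{|\zeta|\le s\}$, averages to its value $0$ at the origin) together with $\int_\U\xi_\mu\,d\V=1$ gives: $P_\mu(u)=P_0:=\int_{r_\mu}^{r_\mu'}\log(s^2)\,\xi_\mu(s)\,\frac{2s\,ds}{(1-s^2)^2}$ (a negative constant) for $|u|\le r_\mu$; $\log|u|^2-P_\mu(u)=\int_{|u|}^{r_\mu'}\log\frac{|u|^2}{s^2}\,\xi_\mu(s)\,\frac{2s\,ds}{(1-s^2)^2}$ for $r_\mu\le|u|\le r_\mu'$; and $\log|u|^2-P_\mu(u)=0$ for $|u|\ge r_\mu'$. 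In particular the cutoff $\chi_{D_\mu'}$ in \eqref{p23*} is harmless, $P_\mu\in C^1$, and $w_\mu$ together with its gradient vanishes on $\partial D_\mu'$, so $w_{\Lambda,\alpha}\in C^1(\U\setminus\Lambda)$.

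Parts (a) and (b) then fall out of these formulas. On $r_\mu\le|u|\le r_\mu'$ the integrand $\log(|u|^2/s^2)\le0$, so $w_\mu\le0$ there, and $w_\mu=0$ for $|u|\ge r_\mu'$; for $|u|\le r_\mu$, writing $-P_0=\frac1K\int_{r_\mu}^{r_\mu'}\big(\log\frac1{s^2}\big)^2\frac{2s\,ds}{(1-s^2)^2}$ and using $\log\frac1{s^2}<\log\frac1{r_\mu^2}$ on $(r_\mu,r_\mu')$ gives $-P_0<\frac1K\log\frac1{r_\mu^2}\cdot K=\log\frac1{r_\mu^2}$, hence $P_0>\log r_\mu^2>\log|u|^2$ and $w_\mu<0$; this is (a). For (b), on $D_\mu'\setminus D_\mu$,
\[
 -w_\mu=m_\mu\int_{|u|}^{r_\mu'}\log\frac{s^2}{|u|^2}\,\xi_\mu(s)\,\frac{2s\,ds}{(1-s^2)^2}\ \le\ m_\mu\log\frac{(r_\mu')^2}{r_\mu^2}\int_{r_\mu}^{r_\mu'}\xi_\mu\,d\V=m_\mu\log\Big(1+\frac{C_X}{m_\mu}\Big)\le C_X<\alpha+1,
\]
so $A(\alpha)=\alpha+1$ (indeed $A=C_X$) works; note this bound is uniform in $m_\mu$.

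Part (c) is the crux. By M\"obius invariance of $\widetilde{\Delta}$ and of $d\V$, $\widetilde{\Delta}_z w_\mu=m_\mu\big(\widetilde{\Delta}_u[\log|u|^2-P_\mu(u)]\big)\big|_{u=\varphi_\mu(z)}$. Here $\widetilde{\Delta}_u\log|u|^2=4\pi\delta_0$ is a positive point mass (which only improves the estimate), whereas differentiating under the integral sign, with $\Delta_u\log|\varphi_u(\zeta)|^2=4\pi\delta_\zeta$ and $d\V(\zeta)=dm(\zeta)/(1-|\zeta|^2)^2$, $dm=\frac1\pi dx\,dy$, gives $\widetilde{\Delta}_u P_\mu(u)=4\,\xi_\mu(u)$; since $P_\mu\in C^1$ there is no spurious singular mass on the circles $|u|=r_\mu,r_\mu'$. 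Hence, off the centres, $\widetilde{\Delta}w_{\Lambda,\alpha}=-4m_\mu\,\xi_\mu(\varphi_\mu(\cdot))\ge-\frac{4m_\mu}{K}\log\frac1{r_\mu^2}$, the supremum of $\xi_\mu$ over the annulus being $\frac1K\log\frac1{r_\mu^2}$. Now estimate $K$ from below by substituting $\sigma=s^2$ and using $\log\frac1\sigma\ge1-\sigma$:
\[
 K=\int_{r_\mu^2}^{(r_\mu')^2}\log\tfrac1\sigma\,\frac{d\sigma}{(1-\sigma)^2}\ \ge\ \int_{r_\mu^2}^{(r_\mu')^2}\frac{d\sigma}{1-\sigma}=\log\frac{1-r_\mu^2}{1-(r_\mu')^2}=\log\frac{\alpha+1}{\alpha+1-C_X},
\]
which is independent of $m_\mu$ (this is where $C_X<\alpha+1$ is needed to keep the logarithm positive); combined with $m_\mu\log\frac1{r_\mu^2}=m_\mu\log\big(1+\frac{\alpha+1}{m_\mu}\big)\le\alpha+1$ this yields $\widetilde{\Delta}w_{\Lambda,\alpha}\ge-\dfrac{4(\alpha+1)}{\log\frac{\alpha+1}{\alpha+1-C_X}}$. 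Finally, the hypothesis $C_X>(\alpha+1)(1-e^{-1})$ is exactly what forces $\log\frac{\alpha+1}{\alpha+1-C_X}>1$, so the bound reads $-4(\alpha+1-\varepsilon)$ with $\varepsilon=(\alpha+1)\big(1-1/\log\frac{\alpha+1}{\alpha+1-C_X}\big)>0$, depending only on $\alpha$ and $C_X$. I expect the delicate points to be: pinning down the constant in $\widetilde{\Delta}P_\mu=4\xi_\mu$ (normalizations of the Dirac masses and of $d\V$, $dm$), checking the $C^1$-gluing so that no negative singular mass is created on $\partial D_\mu$ or $\partial D_\mu'$, and then tracking the constants through the lower bound for $K$ carefully enough to recognize that the stated threshold $(\alpha+1)(1-e^{-1})$ for $C_X$ is precisely what makes the final bound strictly larger than $-4(\alpha+1)$.
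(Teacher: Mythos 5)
Your proof is correct and takes essentially the same route as the paper's: reduction to a single summand via the disjointness of the $D_\lambda'$, the identity $\widetilde{\Delta}(\xi\star\log|\cdot|^2)=4\xi$ combined with M\"obius invariance, the bound $m_\lambda\log\frac{m_\lambda+\alpha+1}{m_\lambda}\le\alpha+1$, and the estimate $K\ge\log\frac{\alpha+1}{\alpha+1-C_X}>1$ forced by $C_X>(\alpha+1)(1-e^{-1})$. The only cosmetic differences are that you compute the radial averages of $\log|\varphi_u(\zeta)|^2$ explicitly (which also makes the $C^1$-gluing at $\partial D_\lambda$ and $\partial D_\lambda'$, hence the harmlessness of the cutoff $\chi_{D_\lambda'}$, fully transparent) where the paper invokes subharmonicity of $\log|\varphi_a|$ for part (a), and that you get the lower bound for $K$ from $\log(1/\sigma)\ge 1-\sigma$ instead of the paper's integration by parts.
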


\begin{proof}
To see $(a)$, since $\log|\varphi_a|$ is sub-harmonic, we have
\begin{equation*}
    \log|a| \times \log\frac{1}{r^2}\leq\frac{1}{2\pi}\int_0^{2\pi} \log|\varphi_a(re^{i\theta}))|\log\frac{1}{r^2}d\theta.
\end{equation*}
Integrating from $r_{\lambda}$ to $r_\lambda'$ with respect to the measure $\frac{rdr}{(1-r^2)^2}$ and dividing by $K$ yields the required result.

For $(b)$, observe that the separation condition implies that it is sufficient to consider
only one term of the sum. Let us also set $a=\varphi_{\lambda}(z)$, and notice that
$z\in D_{\lambda'}\setminus D_{\lambda}$ implies in particular $|a|=|\varphi_{\lambda}(z)|>
r_{\lambda}$. Hence
\begin{eqnarray*}
-w_{\Lambda,\alpha}(z)&=&
m_\lambda\left[\frac{1}{K(m_\lambda,C_X,\alpha)}\int_{r_\lambda<|\zeta|<r_\lambda'}
 \underbrace{\log|\varphi_{a}(\zeta)|^2\log\frac{1}{|\zeta|^2}}_{<0}d\V(\zeta)-\log|a|^2\right]\\
 &\le& m_{\lambda}\log\frac{1}{|a|^2}<m_{\lambda}\log\frac{1}{r_{\lambda}^2}
 =m_\lambda\log\frac{m_\lambda+\alpha+1}{m_\lambda}\\
 &\le &{\alpha +1}.
\end{eqnarray*}

Let us discuss $(c)$. Setting $a=\varphi_{\lambda}(z)$, we have
\[
 h(z):=\int_{\U}\log|\varphi_{\varphi_\lambda(z)}(\zeta)|^2\xi(\zeta)d\V(\zeta)
=\int_{\U}\log|\varphi_{a}(\zeta)|^2\xi(\zeta)d\V(\zeta)
 =(\xi\star E)(a),
\]
with $E(u)=\log|u|^2$.
Observe that $\widetilde{\Delta}(h\circ\varphi_{\lambda})=(\widetilde{\Delta}h)\circ\varphi_{\lambda}$
(see e.g.\ \cite[p.120]{BO95}), $\widetilde{\Delta}(\mu\star\log|\cdot|^2)=4\mu$ for any measure (notice that in \cite{BO95}, the authors define $\Delta=\partial
\overline{\partial}$ so that in our setting we have to introduce an additional factor 4), 
and hence
\[
\widetilde{\Delta}h=\widetilde{\Delta}[(\xi\star E)\circ\varphi_{\lambda}]= 4 \times (\xi\circ\varphi_{\lambda}).
\]
Hence, for $z\in D_\lambda'\setminus D_\lambda$
\begin{align*}
    \widetilde{\Delta}w_{\Lambda,\alpha}(z)&=m_\lambda\left(\underbrace{\widetilde{\Delta}\log|\varphi_\lambda(z)|^2}_{4\pi (1-|z|^2)\delta_{\lambda}}-\widetilde{\Delta}h(z)\right)\\
    &\geq -4 m_\lambda
 (\xi \circ\varphi_{\lambda}) (z).
\end{align*}

Since $\xi$ is decreasing, we get
\[
\widetilde{\Delta}w_{\Lambda,\alpha}(z)\ge -4m_{\lambda}\xi(r_{\lambda})=
-4\frac{m_{\lambda}}{K}\log \frac{m_{\lambda}+\alpha+1}{m_{\lambda}}
\ge -4\frac{\alpha+1}{K}.
\]
Let us estimate
\[
 K=2\int_{r_{\lambda}}^{r_{\lambda}'}\frac{-r\ln r^2}{(1-r^2)^2}dr
\]

The function $h(r)= \dfrac{ r^{2} }{1-r^{2} }\log\dfrac{1}{r^2}$ is increasing on $(0,1)$,  we get 
 \begin{align}\label{p24*}
 K
  &= \int_{(r_{\lambda})^2}^{(r_{\lambda}')^2}\log\frac{1}{t}\frac{ dt}{(1-t)^2} =  \left[ \frac{ 1}{1-t} \log\frac{1}{t} \right]_{r_{\lambda}^2 }^{r_{\lambda}'^2} + \int_{r_{\lambda}^2}^{r_{\lambda}'^2}\frac{1}{t(1-t)}dt \nonumber \\
  &=  h(r_{\lambda}') - h(r_{\lambda}) +   \log\frac{ 1-r_{\lambda}^2}{1- r_{\lambda}'^2 }  \nonumber\\    
&\geq   \log\frac{ \alpha+1 }{ \alpha+1-C_X }
\end{align}
Since $C_X>(1+\alpha)(1-e^{-1})$, we have $K>1$ as required. 
\end{proof}

We are ready to establish our conditions for interpolating divisors.
We recall the statement of the corresponding Theorem \ref{ThmInter2} here for the convenience of 
the reader. 
\begin{theorem*}
Let $\alpha>-1$.
\begin{enumerate}[label=(\alph*)]
\item 
If $X$ is an interpolating divisor for $\Bergman$, then there exists $C_X>0$ such that the hyperbolic disks $$\left\{ D\left(\lambda,\sqrt{\frac{m_\lambda-C_X}{m_\lambda+\alpha+1}}\right)\right\}_{\lambda\in\Lambda, m_\lambda>C_X}$$ are pairwise disjoint.
\item 
Conversely, if for some  $C_X$ such that   $(\alpha+1)(1-e^{-1})<C_X<\alpha+1$,  the hyperbolic disks  $$ \left\{ D\left(\lambda,\sqrt{\frac{m_\lambda+C_X}{m_\lambda+\alpha+1}}\right)\right\}_{\lambda\in\Lambda}$$ are pairwise disjoint, then $X$ is an  interpolating divisor for $\Bergman$.
\end{enumerate}
\end{theorem*}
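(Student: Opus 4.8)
\noindent\textbf{Proof strategy for Theorem~\ref{ThmInter2}.}

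\emph{Necessity (a).} By a standard duality/closed‑graph argument, $X$ being interpolating is equivalent to the lower Riesz estimate
\[
\Bigl\|\sum_{\lambda\in\Lambda}\sum_{j<m_\lambda}c_\lambda^j\,T_\lambda e_j\Bigr\|_{\alpha,2}^2\ \geq\ \kappa\sum_{\lambda\in\Lambda}\sum_{j<m_\lambda}|c_\lambda^j|^2
\]
for all finitely supported $(c_\lambda^j)$, with $\kappa>0$ depending only on the interpolation constant. Since $\{T_\lambda e_j\}_{j<m_\lambda}$ is orthonormal for each fixed $\lambda$ (being the isometric image of an orthonormal set), applying this to combinations supported on two points $\lambda\neq\mu$ gives, for a unit $u\in\operatorname{Ran}P_\lambda$ and a unit $w\in\operatorname{Ran}P_\mu$, the bound $\|u-w\|^2\geq 2\kappa$, whence $\|P_\lambda P_\mu\|\leq 1-\kappa$ for all $\lambda\neq\mu$. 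I would then argue by contradiction: if no $C_X$ makes the disks pairwise disjoint, then — enlarging $C_X$ only helps disjointness — for each $C>0$ there is a pair $\lambda\neq\mu$ with $m_\lambda,m_\mu>C$ whose reduced disks meet, which by the pseudohyperbolic triangle inequality forces $1-\rho(\lambda,\mu)^2\geq\tfrac14\cdot\tfrac{C+\alpha+1}{m_\lambda+\alpha+1}\cdot\tfrac{C+\alpha+1}{m_\mu+\alpha+1}$. The plan is, along such pairs $(\lambda_n,\mu_n)$ with $C=C_n\to\infty$, to produce unit vectors $u_n\in\operatorname{Ran}P_{\lambda_n}$, $w_n\in\operatorname{Ran}P_{\mu_n}$ with $|\langle u_n,w_n\rangle|\to 1$, contradicting $\|P_{\lambda_n}P_{\mu_n}\|\leq 1-\kappa$. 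Applying the isometry $T_{\lambda_n}$ (with $m_{\lambda_n}\geq m_{\mu_n}$, say) and using $T_{\lambda_n}T_{\mu_n}e_j=(\text{unimodular})\,T_{b_n}e_j$ with $b_n=\varphi_{\lambda_n}(\mu_n)$, $|b_n|=\rho(\lambda_n,\mu_n)$, turns $\operatorname{Ran}P_{\lambda_n}$ into $\mathcal{P}_{m_{\lambda_n}-1}$ and $\operatorname{Ran}P_{\mu_n}$ into $T_{b_n}\mathcal{P}_{m_{\mu_n}-1}$; one then picks an index $i_n<m_{\mu_n}$ matching the concentration radius $\sqrt{i_n/(i_n+\alpha+1)}$ of $e_{i_n}$ to $\rho(\lambda_n,\mu_n)$, sets $w_n=(\text{unimodular})\,T_{b_n}e_{i_n}$, and shows $\operatorname{dist}\bigl(T_{b_n}e_{i_n},\mathcal{P}_{m_{\lambda_n}-1}\bigr)^2=\sum_{k\geq m_{\lambda_n}}|\langle T_{b_n}e_{i_n},e_k\rangle|^2\to 0$, so that $u_n$ may be taken as the normalized projection of $w_n$ onto $\mathcal{P}_{m_{\lambda_n}-1}$. \emph{This last quantitative estimate is the main obstacle of the whole theorem}: it is the pseudohyperbolic, incomplete‑$\beta$ counterpart of the euclidean, incomplete‑$\Gamma$ estimate of \cite{BHKM}; the difficulty is that $\lambda_n,\mu_n$ lie extremely close to $\partial\U$, so the ``influence zones'' $D(\lambda_n,\cdot)$ and $D(\mu_n,\cdot)$ must be aligned with radii calibrated through Lemma~\ref{lemmabetaestimates}, and the multiplicities $m_{\lambda_n},m_{\mu_n}$ occurring in the exponents of $\rho(\lambda_n,\mu_n)$ have to be balanced against each other — this is where letting $C\to\infty$ enters.

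\emph{Sufficiency (b).} This is a $\bar\partial$‑gluing argument as in \cite{BO95}, built on Ohsawa's Theorem~\ref{ThmOhsawa} and the weight $w_{\Lambda,\alpha}$ of Lemma~\ref{TheWeightedW}. Given $v=(v_\lambda^j)\in\ell^2(X)$, for each $\lambda$ set $F_\lambda:=T_\lambda\bigl(\sum_{j<m_\lambda}v_\lambda^j e_j\bigr)$, so that $\langle F_\lambda,T_\lambda e_j\rangle=v_\lambda^j$ for $j<m_\lambda$ and $\|F_\lambda\|_{\alpha,2}^2=\sum_{j<m_\lambda}|v_\lambda^j|^2$. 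Since $(r_\lambda')^2-r_\lambda^2=\tfrac{C_X}{m_\lambda+\alpha+1}$ while $1-r_\lambda^2=\tfrac{\alpha+1}{m_\lambda+\alpha+1}$ and $1-(r_\lambda')^2=\tfrac{\alpha+1-C_X}{m_\lambda+\alpha+1}$, the annulus $D_\lambda'\setminus D_\lambda$ has pseudohyperbolic width at least $\tfrac12\log\tfrac{\alpha+1}{\alpha+1-C_X}>0$, uniformly in $\lambda$; hence one may choose cut‑offs $\chi_\lambda$ with $\chi_\lambda\equiv 1$ on $D_\lambda$, $\operatorname{supp}\chi_\lambda\subset D_\lambda'$ and $|\bar\partial\chi_\lambda|\lesssim(1-|z|^2)^{-1}$ (constant depending on $C_X,\alpha$). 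Put $\widetilde F=\sum_\lambda\chi_\lambda F_\lambda$; because the $D_\lambda'$ are pairwise disjoint, $\|\widetilde F\|_{\alpha,2}\lesssim\|v\|_{\ell^2(X)}$ and $g:=\bar\partial\widetilde F=\sum_\lambda(\bar\partial\chi_\lambda)F_\lambda$ is supported in $\bigcup_\lambda(D_\lambda'\setminus D_\lambda)$. Solve $\bar\partial u=g$ using Theorem~\ref{ThmOhsawa} applied to the subharmonic weight $\psi(z)=w_{\Lambda,\alpha}(z)+(\alpha+1)\log\tfrac{1}{1-|z|^2}$, for which $\widetilde\Delta\psi=\widetilde\Delta w_{\Lambda,\alpha}+4(\alpha+1)\geq 4\varepsilon>0$ by Lemma~\ref{TheWeightedW}(c) — here $\varepsilon>0$ is precisely what the hypothesis $(\alpha+1)(1-e^{-1})<C_X$ provides, via $K>1$.

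It remains to estimate the two sides of \eqref{EstimOhsawa}. On $\operatorname{supp}g$, Lemma~\ref{TheWeightedW}(b) gives $e^{-\psi}=e^{-w_{\Lambda,\alpha}}(1-|z|^2)^{\alpha+1}\leq e^{A(\alpha)}(1-|z|^2)^{\alpha+1}$, so, using $|\bar\partial\chi_\lambda|^2\lesssim(1-|z|^2)^{-2}$, the right‑hand side of \eqref{EstimOhsawa} is
\[
\lesssim\sum_{\lambda\in\Lambda}\int_{D_\lambda'\setminus D_\lambda}|F_\lambda(z)|^2(1-|z|^2)^{\alpha}\,dm(z)\ \leq\ \sum_{\lambda\in\Lambda}\|F_\lambda\|_{\alpha,2}^2\ \asymp\ \|v\|_{\ell^2(X)}^2 .
\]
Since $w_{\Lambda,\alpha}\leq 0$ (Lemma~\ref{TheWeightedW}(a)) we have $e^{-\psi}/(1-|z|^2)\geq(1-|z|^2)^{\alpha}$, so the left‑hand side of \eqref{EstimOhsawa} dominates $\|u\|_{\alpha,2}^2$; and near each $\lambda$ one has $w_{\Lambda,\alpha}(z)=2m_\lambda\log|z-\lambda|+O(1)$, whence $e^{-\psi(z)}\asymp|z-\lambda|^{-2m_\lambda}$ and the finiteness of the left‑hand side forces $u$ to vanish to order $\geq m_\lambda$ at $\lambda$. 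Then $F:=\widetilde F-u$ is holomorphic, belongs to $\Bergman$ with $\|F\|_{\alpha,2}\lesssim\|v\|_{\ell^2(X)}$, and on $D_\lambda$ equals $F_\lambda-u$ with $u$ vanishing to order $m_\lambda$ at $\lambda$; as $\langle F,T_\lambda e_j\rangle$ depends only on the $(m_\lambda-1)$‑jet of $F$ at $\lambda$, this gives $\langle F,T_\lambda e_j\rangle=\langle F_\lambda,T_\lambda e_j\rangle=v_\lambda^j$ for all $j<m_\lambda$, so $X$ is interpolating. The only slightly delicate point in (b) is the interplay between the cut‑off construction and the weight: one must check that $e^{-\psi}$ carries \emph{exactly} the singularity $|z-\lambda|^{-2m_\lambda}$ at each node and that the $\bar\partial$‑estimate loses nothing across the thin annuli $D_\lambda'\setminus D_\lambda$ — which is precisely what the calibration of $r_\lambda,r_\lambda'$ and of $w_{\Lambda,\alpha}$ in Lemma~\ref{TheWeightedW} secures.
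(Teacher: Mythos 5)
Your sufficiency argument for (b) is correct and is essentially the paper's own proof: the same smooth interpolant glued by cut-offs supported in the pairwise disjoint disks $D_\lambda'$, the same weight $\phi=(\alpha+1)\log\frac{1}{1-|z|^2}+w_{\Lambda,\alpha}$ fed into Ohsawa's Theorem~\ref{ThmOhsawa}, the same use of Lemma~\ref{TheWeightedW}(a)--(c) (the hypothesis $C_X>(\alpha+1)(1-e^{-1})$ entering only through $K>1$, hence $\widetilde\Delta\phi>0$), and the same singularity argument forcing the $\dbar$-correction $u$ to vanish to order $m_\lambda$ at each $\lambda$, so that $\langle F-u,T_\lambda e_j\rangle=v_\lambda^j$ by \eqref{p5*}. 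One minor imprecision: your numerical value for the pseudohyperbolic width of $D_\lambda'\setminus D_\lambda$ is not the right expression, but all that is needed is the uniform lower bound $\rho(r_\lambda,r_\lambda')\ge \frac{(r_\lambda')^2-r_\lambda^2}{1-(r_\lambda r_\lambda')^2}\ge \frac{C_X}{2\alpha+2-C_X}>0$, which does hold and which is what yields $|\dbar\chi_\lambda|\lesssim (1-|z|^2)^{-1}$, exactly as in \eqref{p26*}--\eqref{p27*}.

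Part (a) is where the genuine gap lies, and you flag it yourself: the entire argument funnels into the claim $\operatorname{dist}\bigl(T_{b_n}e_{i_n},\mathcal{P}_{m_{\lambda_n}-1}\bigr)\to 0$, which is asserted, not proved, and it is not a routine step. The function $T_{b_n}e_{i_n}$ concentrates on an entire pseudohyperbolic circle about $b_n$, part of which can lie outside the ``influence zone'' $D\bigl(0,\sqrt{m_{\lambda_n}/(m_{\lambda_n}+\alpha+1)}\bigr)$, and nothing in Lemma~\ref{lemmabetaestimates} controls the high-order coefficients $\langle T_{b_n}e_{i_n},e_k\rangle$, $k\ge m_{\lambda_n}$; the lemma's estimates go in the opposite direction (small low-order coefficients plus a bound on the critical disk imply smallness on a slightly smaller disk). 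Your reduction of interpolation to the lower Riesz (Riesz--Fischer) bound and to $\|P_\lambda P_\mu\|\le 1-\kappa$ is fine, but without the missing two-point localization estimate the proof does not close. The paper's proof avoids this difficulty altogether: assuming the reduced disks overlap for every $C_X$, it picks $w$ in the intersection, uses the interpolation hypothesis (with the closed-graph constant $M_X$) to produce $f\in\Bergman$ with $f\in N^{2,\alpha}_{\lambda_1,m_{\lambda_1}}$, $f-T_w1\in N^{2,\alpha}_{\lambda_2,m_{\lambda_2}}$ and $\|f\|_{\alpha,2}\le M_X$, and then applies Lemma~\ref{lemmamenorradio} (i.e.\ Lemma~\ref{lemmabetaestimates}(c)) to both $f$ and $f-T_w1$ on the disks of radius $\sqrt{(m-C_X+1)/(m+\alpha+1)}$: both integrals are $o(1)\,M_X^2$ as $C_X\to\infty$ on a fixed pseudohyperbolic disk $D(w,\varepsilon)$ contained in both, while $(\alpha+1)\int_{D(w,\varepsilon)}|T_w1|^2\,dA_\alpha=1-(1-\varepsilon^2)^{\alpha+1}$ is bounded below, a contradiction for large $C_X$. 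If you wish to keep the angle-between-subspaces route, you would have to develop the missing incomplete-$\beta$ coefficient estimate from scratch; the detour through the jet of $T_w1$ and the already-established local control lemma is precisely how the paper sidesteps it.
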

\begin{proof}
{\it Sufficiency part.}

The proof is based on a $\overline{\partial}$-method which consists, as usual, in constructing first a smooth interpolating function, and then to use H\"{o}rmander's solution to the $\dbar$-equation with $L^2$-estimates to make the interpolating function holomorphic without destroying the interpolation.

Given $v=(v_\lambda^j)_{\lambda\in\Lambda,\, \alpha<m_\lambda}\in \ell^2(X)$, take polynomials $p_\lambda$, $\lambda\in\Lambda$, with $\deg p_{\lambda}\le m_{\lambda}-1$, such that
\begin{equation*}
\langle p_\lambda,e_j\rangle=v_\lambda^j,\quad \lambda\in\Lambda,\, j<m_\lambda.
\end{equation*}
We recall that the above interpolation condition means that we interpolate germs in $\lambda$
and that it is thus sufficient to guarantee that the interpolating function and its derivatives take the values $v_{\lambda}^j$ in $\lambda$, $0\le j\le m_{\lambda}-1$.  
Recall from \eqref{p5*} that $N^{2,\alpha}_{\lambda,m_{\lambda}}$ denotes the set of functions
$f$ in $\mathcal{A}^2_{\alpha}$ vanishing up to the order $m_{\lambda}-1$ in $\lambda$. 
Since $v\in \ell^2(X)$, we have
\begin{equation*}
\|p_\lambda\|_{\Bergman/N^{2,\alpha}_{0,m_\lambda}}=\|p_\lambda\|_{\alpha,2},\quad \sum_{\lambda\in\Lambda}\|p_\lambda\|_{\Bergman/N^{2,\alpha}_{0,m_\lambda}}^2=\|v\|_2^2<\infty.
\end{equation*}

Let us denote $Q_\lambda=T_\lambda p_\lambda$, $r_\lambda=\sqrt{\frac{m_\lambda}{m_\lambda+\alpha+1}}$, $r_\lambda'=\sqrt{\frac{m_\lambda+C_X}{m_\lambda+\alpha+1}}$, $ D_\lambda= D\left(\lambda,r_\lambda\right)$ and $ D_\lambda'= D\left(\lambda,r_\lambda'\right)$. Notice that $ \{D_\lambda'\}_{\lambda\in\Lambda}$ are pairwise disjoint by hypothesis. Consider the smooth interpolating function
\begin{equation}
F(z)=\sum_{\lambda\in\Lambda} Q_\lambda(z)\eta\left(|\varphi_\lambda(z)|-r_\lambda'\right),
\end{equation}
where $\eta=\eta_\lambda$ is a smooth cut-off function on $\R$, so that
\begin{enumerate}[label=(\alph*)]
	\item $\text{supp }\eta\subset(-\infty,0],$\label{etaprop1}
	\item $\eta\equiv 1$ on $(-\infty,r_\lambda-r'_\lambda],$ \label{etaprop2}
	\item $|\eta'|\lesssim \frac{1}{r_\lambda'-r_\lambda}\simeq 2\frac{m_\lambda+\alpha+1}{C_X}$. \label{etaprop3}
\end{enumerate}

The separation condition implies that in each $z$, $F(z)$ is given by at most one term.

Notice that $\text{supp } F\subset \bigcup_{\lambda\in\Lambda} D_\lambda'$. Also
\begin{equation*}
\text{supp }\dbar F\subset \bigcup_{\lambda\in\Lambda}\left( D_\lambda'\setminus  D_\lambda\right),
\end{equation*}
since $\eta\left(|\varphi_\lambda(z)|-r_\lambda'\right)$ is constant outside
$\bigcup_{\lambda\in\Lambda}\left( D_\lambda'\setminus  D_\lambda\right)$.

A direct calculation shows that
\begin{equation}\label{p25*}
   | \overline{\partial} \eta\left(|\varphi_\lambda(z)|-r_\lambda'\right)|  \leq \frac{1}{2}  \|\eta'\|_{\infty}   \frac{1-|\varphi_\lambda(z)|^2}{1-|z|^2},
\end{equation}
so that for $z\in  D'_\lambda\setminus  D_\lambda$, property \ref{etaprop3} yields
\begin{eqnarray}\label{p26*}
|\dbar F(z)|&\leq& |Q_\lambda(z)|\times |\dbar \eta|\times |\dbar\big(|\varphi_\lambda(z)|\big)|\nonumber\\
&\lesssim& \frac{|Q_\lambda(z)|}{r_\lambda'-r_\lambda}   \frac{1-r_\lambda^2}{1-|z|^2} .  
\end{eqnarray}

Since $T_\lambda$ is an isometry of $\Bergman$,  $$\|Q_\lambda\|_{\alpha,2}^2 =\|p_\lambda \|_{\alpha,2}^2=
\|p_\lambda\|^2_{\Bergman/N_{0,m_\lambda}^2},$$ and therefore $F$ has the growth of $\Bergman$:
\begin{align*}
(\alpha+1)\int_{\U} |F(z)|^2(1-|z|^2)^\alpha dm(z)&\le\sum_{\lambda\in\Lambda}(\alpha+1)\int_{ D_\lambda'} |Q_\lambda(z)|^2 (1-|z|^2)^\alpha dm(z)\\ &\leq\sum_{\lambda\in\Lambda}\|Q_\lambda \|_{\Bergman}^2=\|v\|_2^2.
\end{align*}

Next we construct a holomorphic interpolating function using a $\dbar$-technique. As in the scheme used in \cite{BO95}, we are looking for a holomorphic interpolating function of the from $f=F-u$, where $u$ is a solution to the $\dbar$-problem $\dbar u=\dbar F$ with the conditions
\begin{equation*}
\int_{\U} |u(z)|^2 (1-|z|^2)^\alpha dm(z)<\infty,
\end{equation*}
and
\begin{equation*}
\d^j u(\lambda)=0,\ \forall j<m_\lambda.
\end{equation*}
This last condition will ensure that 
$$\partial^j f(\lambda)=\partial^j F(\lambda)
,\qquad  j<m_\lambda,$$
and we remind that the interpolation condition $\langle f,T_{\lambda}e_j\rangle=v_{\lambda}^j$
translates into an interpolation by germs. 

We will apply Ohsawa's Theorem \ref{ThmOhsawa} with the subharmonic weight
$$\phi(z)=(\alpha+1)\log\frac{1}{1-|z|^2}+w_{\Lambda,\alpha}(z),$$ where $w_{\Lambda,\alpha}$ is the weight in Lemma \ref{TheWeightedW}. 
We need to compute $\widetilde{\Delta}\phi$ as in Ohsawa's theorem:
{ %
\begin{align*}
 \widetilde{\Delta}\phi&=(1-|z|^2)^2\Delta \left((\alpha+1)\log\frac{1}{1-|z|^2}+w_{\Lambda,\alpha}\right)\\
&=4(\alpha+1)+(1-|z|^2)^2\Delta w_{\Lambda,\alpha}\\
&\geq 4(\alpha+1)- 4(\frac{\alpha+1}{K})>\varepsilon. 
\end{align*}
The last inequality is due to Lemma \ref{TheWeightedW} (c). 
}\\
The properties of the weight $w_{\Lambda,\alpha}$ and Ohsawa's estimate \eqref{EstimOhsawa} yield
\begin{eqnarray*}
\lefteqn{\int_{\U}|u(z)|^2(1-|z|^2)^\alpha dA_\alpha(z)=(\alpha+1)\int_{\U}|u(z)|^2\frac{e^{-(\alpha+1)\log\frac{1}{1-|z|^2}}}{1-|z|^2}\frac{dm(z)}{\pi}}\\
&&\leq(\alpha+1)\int_{\U}|u(z)|^2\frac{e^{-\phi(z)}}{1-|z|^2}\frac{dm(z)}{\pi} \quad \text{(Lemma \ref{TheWeightedW}(a))}\\
&&\lesssim \int_{\U}|\dbar F(z)|^2e^{-\phi(z)}(1-|z|^2)dm(z) \quad\text{(Ohsawa)}\\
&&\lesssim\sum_{\lambda\in\Lambda}\int_{ D_\lambda'\setminus  D_\lambda}\frac{|Q_\lambda(z)|^2}{(r_\lambda'-r_\lambda)^2}\Big( \frac{1-r_\lambda^2}{1-|z|^2}\Big)^2(1-|z|^2)^{\alpha+2} dm(z)\quad \text{(Lemma \ref{TheWeightedW}(b) \& \eqref{p26*})}
\end{eqnarray*}
Now
\begin{equation}\label{p27*}
\frac{1-r_{\lambda}^2}{r_\lambda'-r_\lambda}=\frac{(r_\lambda'+r_\lambda)(1-r_{\lambda}^2)}{r_\lambda'^{2}-r_\lambda^{2}}\leq 2 \frac{\alpha+1}{C_{X}}, \qquad \lambda\in\Lambda,
\end{equation}
so that
\[
\int_{\U}|u(z)|^2(1-|z|^2)^\alpha dA_\alpha(z)
\lesssim\sum_{\lambda\in\Lambda}\|Q_\lambda\|^2_{\Bergman/N^2_{0,m_\lambda}}<\infty.
\]
Hence, $f=F-u\in \Bergman$.

Finally, we want to see that $\langle f,T_\lambda e_j \rangle=v_\lambda^j$, $j<m_\lambda$.
We have already mentioned that for this we need $u$ to vanish at order $m_\lambda$ in each $\lambda$, so let us examine the order of the singularity near $\lambda$. For each $z\in  D_\lambda$
\begin{equation*}
	w(z)= m_\lambda \log|\varphi_\lambda(z)|^2+C_{\lambda}.
\end{equation*}
and therefore
\begin{align*}
	+\infty&>\int_{ D_\lambda}|u(z)|^2e^{-w(z)}dm(z)\gtrsim \int_{ D_\lambda}|u(z)|^2 e^{-\log|\varphi_\lambda(z)|^{2m_\lambda}}dm(z)\\
	&=\int_{ D_\lambda}|u(z)|^2\frac{1}{|\varphi_\lambda(z)|^{2m_\lambda}}dm(z).
\end{align*}
This forces $u$ to vanish at order $m_\lambda$ on $\lambda\in\Lambda$. Therefore
\begin{equation*}
\langle f,T_\lambda e_j\rangle=v_\lambda^j,\qquad j\leq m_\lambda, \lambda\in\Lambda.
\end{equation*}
\
{\it Necessary part.}

 Let $X=\{(\lambda,m_\lambda)\}_{\lambda\in\Lambda}$ be an interpolating divisor and assume that the  discs $\Big\{ D\left(\lambda,\sqrt{\frac{m_\lambda-C_X}{m_\lambda+\alpha+1}}\right)\Big\}_{\lambda,\ m_\lambda>C_X}$ are not separated for any $C_X>0$. Let $r_{\lambda}=\sqrt{\frac{m_{\lambda}-C_X}{m_{\lambda}+\alpha+1}}$ and $r_{\lambda}'=\sqrt{\frac{m_{\lambda}-(C_X-1)}{m_{\lambda}+\alpha +1}}$. 
 There  exists $\lambda_1,\lambda_2\in\Lambda$, $\lambda_1\neq\lambda_2$ 
and $ m_{\lambda_1},m_{\lambda_2}>C_X$  such that 
\begin{equation*}
  D\left(\lambda_1,r_{\lambda_1}\right)\cap  D\left(\lambda_2,r_{\lambda_2}\right)\neq \emptyset.
\end{equation*}
And also by the same argument 
\begin{equation*}
  D\left(\lambda_1,\sqrt{\frac{m_{\lambda_1}-(C_X-1)}{m_{\lambda_1}+\alpha+1}}\right)\cap  D\left(\lambda_2,\sqrt{\frac{m_{\lambda_2}-(C_X-1)}{m_{\lambda_2}+\alpha+1}}\right)\neq \emptyset.
\end{equation*}

Let $\zeta\in\partial D\left(\lambda_1,r_{\lambda_1}\right)$ and $\zeta'\in\partial D\left(\lambda_1,r_{\lambda_1}'\right)$,
 we have 
\begin{multline*}
\rho\left(\zeta,\zeta'\right) \geq \frac{\rho(\zeta' ,\lambda_1 )-\rho(\lambda_1,\zeta )} {1-\rho(\zeta',\lambda_1) \rho(\lambda_1,\zeta ) } 
=\rho(r_\lambda,r_{\lambda}') = \frac{  r_{\lambda_1}'-r_{\lambda_1} }{ 1-r_{\lambda_1}r_{\lambda_1}'} 
                  \geq   \frac{(r_{\lambda_1}')^2-(r_{\lambda_1})^2  }{ 1-(r_{\lambda_1}r_{\lambda_1}')^2}\\
                        =   \frac{ m_{\lambda_1}+\alpha+1 }{ m_{\lambda_1}(2\alpha+2C_X+1)+(\alpha+2)^2-C_X^2+C_X}
           \geq   \frac{1}{ 2\alpha+2C_X+1}=:\delta>0.                  
\end{multline*}
Hence, the  estimate of the hyperbolic distance between $\partial D\left(\lambda_1,r_{\lambda_1}\right)$ and $\partial D\left(\lambda_1,r_{\lambda_1}'\right)$, 
is bounded from below by $\delta$.
Thus,  if $w\in D\left(\lambda_1,r_{\lambda_1}\right)  \cap D\left(\lambda_2,r_{\lambda_2}\right)\subset  D\left(\lambda_1,r_{\lambda_1}'\right)\cap D\left(\lambda_2,r_{\lambda_2}'\right)$, and 
\[ 
\varepsilon=\frac{1}{2}\delta,
\]
then we have 
\begin{equation*}
 D(w,\varepsilon) \subset \left(\lambda_1,\sqrt{\frac{m_{\lambda_1}-C_X}{m_{\lambda_1}+\alpha+1}}\right)\cap  D\left(\lambda_2,\sqrt{\frac{m_{\lambda_2}-C_X}{m_{\lambda_2}+\alpha+1}}\right).
\end{equation*}

Since $X$ is an interpolating divisor, there exists $f\in\Bergman$ such that 
\begin{center}
\begin{enumerate}[label=(\alph*)]
	\item $f\in N_{\lambda_1,m_{\lambda_1}}^2$,
	\item $f-T_w1\in  N_{\lambda_2,m_{\lambda_2}}^2$,
	\item $\|f\|_{\alpha,2}\leq M_X$, where $M_X$ is a fixed (interpolating) constant
depending only on $X$.
\end{enumerate}
\end{center}
By Lemma \ref{lemmamenorradio} applied to both $f$ and $f-T_w1$ (for which the sum
of the squares of the corresponding Fourier coefficients $|\langle g,e_j\rangle|$ vanish and the
norm on the disks are in particular bounded by $M_X$) we have
	\begin{align*}
	&\int_{ D\left(\lambda_1,\sqrt{\frac{m_{\lambda_1}-C_X+1}{m_{\lambda_1}+\alpha+1}}\right)}|f(z)|^2dA_{\alpha}(z)\\
	&+\int_{ D\left(\lambda_2,\sqrt{\frac{m_{\lambda_2}-C_X+1}{m_{\lambda_2}+\alpha+1}}\right)}|(f-T_w1)(z)|^2 dA_{\alpha}(z)= o(1)\cdot M_X^2,\quad C_X\rightarrow\infty,
	\end{align*}
	and therefore
	\begin{align*}
	\int_{D\left(w,\epsilon\right)}|f(z)|^2dA_{\alpha}(z)&+\int_{D\left(w,\epsilon\right)}|(f-T_w1)(z)|^2dA_{\alpha}(z)\\
	& =o(1)\cdot M_X^2,\quad C_X\rightarrow\infty.
	\end{align*}
	On the other hand,
	\begin{align*}
	(\alpha+1)\int_{D\left(w,\epsilon\right)}|(T_w1)(z)|^2dA_{\alpha}(z)&=(\alpha+1)\int_{ D\left(0,\epsilon\right)}(1-|z|^2)^\alpha dm(z)\\
	&=\left(1-\left(1-\epsilon^2\right)^{\alpha+1}\right)>0,
	\end{align*}
	which gives a contradiction when $C_X>C_X(M_X)$. 

\end{proof}

{
\section{The $\mathcal{A}_\alpha^\infty$-case}\label{section3}

Let $\alpha >0$, we now consider 
\begin{equation*}
    \mathcal{A}^{\infty}_\alpha=\left\{f\in \Hol
(\mathbb{D}):\|f\|_{\alpha,\infty}:=\sup_{z\in\mathbb{D}} (1-|z|^{2})^{\frac{\alpha}{2}}|f(z)|<+\infty\right\}.
\end{equation*}  

We shall start recalling the reformulation of interpolation and
sampling met in the situation $p=2$ in terms of vanishing subspaces. 

For each $\lambda,m_{\lambda}$ we have already introduced the subspace
\begin{equation*}
N_{\lambda}^2:=N^{2,\alpha}_{\lambda,m}=\{f\in\Bergman: f^{(j)}(\lambda)=0, \quad\forall j<m\}.
\end{equation*}
Observe that $\sum_{j<m_\lambda}\left|\langle f,T_\lambda e_j\rangle\right|^2
=\| f\|^2_{\Bergman/N_\lambda^2}$.
Then it becomes clear that $X$ is a \textit{sampling divisor} for $\Bergman$ if, for all $f\in\Bergman$,
\begin{equation*}
\|f\|_{\alpha,2}^2\simeq\sum_{\lambda\in\Lambda}\sum_{j<m_\lambda}\left|\langle f,T_\lambda e_j\rangle\right|^2=\sum_{\lambda\in\Lambda}\| f\|^2_{\Bergman/N_\lambda^2}.
\end{equation*}
Similarly, $X$ is \textit{interpolating} for $\Bergman$, if for all sequence $(f_\lambda)_{\lambda\in\Lambda}\subset\Bergman$ such that
\begin{equation*}
\sum_{\lambda\in\Lambda}\|f_\lambda\|_{\Bergman/N^2_\lambda}^2<\infty,
\end{equation*}
there exists $f\in\Bergman$ so that
\begin{equation*}
f-f_\lambda\in N_\lambda^2.
\end{equation*}

In order to consider the corresponding $L^\infty$ sampling and interpolation problems, we associate to each $\lambda\in\U$ the subspace
\begin{equation*}
	N_\lambda^\infty= N_{\lambda,m_\lambda}^{\infty,\alpha}:=\{f\in\Bergmaninf:\partial^j f(\lambda)=0, \forall j<m_\lambda \}.
\end{equation*}
\begin{definition}
A divisor is called sampling for $\Bergmaninf$, if there exists $L>0$ such that
\begin{equation*}
	\|f\|_{\alpha,\infty}\leq L\sup_{\lambda\in\Lambda}\|f\|_{\Bergmaninf/N_\lambda^\infty}.
\end{equation*}
\end{definition}
In a similar way we define generalized interpolation.
\begin{definition}
The divisor $X$ is called interpolating for $\Bergmaninf$ if for every sequence $(f_\lambda)_{\lambda\in\Lambda}$ with
\begin{equation*}
	\sup_{\lambda\in\Lambda}\|f_\lambda\|_{\Bergmaninf/N_\lambda^\infty}<\infty,
\end{equation*}
there exists a function $f\in\Bergmaninf$ such that
\begin{equation*}
	f-f_\lambda\in N_\lambda^\infty,\quad\lambda\in\Lambda.
\end{equation*}
\end{definition}

\subsection{Local $L^\infty$-estimates}
As in the $L^2$ case we need a local control of the functions of the space $\Bergmaninf$ with small quotient norm. Here is the result corresponding to Lemma \ref{lemmamenorradio} for
$\mathcal{A}^{\infty}_{\alpha}$ ($\alpha>0$).
\begin{lemma}\label{lem:controlnorminf}
	(i) For every $\eta,\varepsilon\in (0,1)$, there exists $C>0$ 
such that if $f\in\Bergmaninf{}$ satisfies $\|f\|_{\alpha,\infty}\leq 1$, $m\geq C$, $\|f\|_{\Bergmaninf/N_{0,m}^{\infty,\alpha}}<\varepsilon$, then 
	\begin{equation*}
	|f(z)|\left(1-|z|^2\right)^{\frac{\alpha}{2}}\leq \eta+\varepsilon , \quad z\in  D\left(0,\sqrt{\frac{m-C}{m+\alpha}}\right).
	\end{equation*}

(ii) For every $C>0$ there exist $\eta,\varepsilon\in (0,1)$, 
such that if $f\in\Bergmaninf{}$ satisfies $\|f\|_{\alpha,\infty}\leq 1$, $m\geq C$, $\|f\|_{\Bergmaninf/N_{0,m}^{\infty,\alpha}}<\varepsilon$, then 
	\begin{equation*}
	|f(z)|\left(1-|z|^2\right)^{\frac{\alpha}{2}}\leq 1-\eta , \quad z\in  D\left(0,\sqrt{\frac{m-C}{m+\alpha}}\right).
	\end{equation*}
\end{lemma}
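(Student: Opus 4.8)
The plan is to prove both parts from a single Schwarz–type estimate, exactly as Lemma \ref{lemmamenorradio} was obtained in the case $p=2$, the $\beta$-function computations being replaced by the maximum principle. The first step is to unpack the quotient norm. If $\|f\|_{\Bergmaninf/N_{0,m}^{\infty,\alpha}}<\varepsilon$, then by definition there is $h\in\Bergmaninf$ with $h^{(j)}(0)=f^{(j)}(0)$ for all $j<m$ and $\|h\|_{\alpha,\infty}<\varepsilon$; set $g:=f-h$. Then $g$ vanishes to order $m$ at the origin and $\|g\|_{\alpha,\infty}\le\|f\|_{\alpha,\infty}+\|h\|_{\alpha,\infty}<1+\varepsilon$. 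Since $|h(z)|(1-|z|^2)^{\alpha/2}<\varepsilon$ for every $z\in\U$, and since $D(0,\sqrt{(m-C)/(m+\alpha)})$ is simply the Euclidean disk of that radius (as $\rho(0,\cdot)=|\cdot|$), everything reduces to bounding $|g(z)|(1-|z|^2)^{\alpha/2}$ on that disk.

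The core estimate I would establish is: if $g\in\Bergmaninf$ satisfies $\|g\|_{\alpha,\infty}\le M$ and vanishes to order $m$ at $0$, and if $C>0$, $m\ge C$, then
\[
|g(z)|\,(1-|z|^2)^{\alpha/2}\le M\,e^{-C/2}\Big(1+\tfrac{C}{\alpha}\Big)^{\alpha/2}\quad\text{for }|z|\le\sqrt{\tfrac{m-C}{m+\alpha}}.
\]
The proof applies the maximum principle to $g(z)/z^m$, which is holomorphic in $\U$: on $|w|=\rho$ one has $|g(w)/w^m|\le M(1-\rho^2)^{-\alpha/2}\rho^{-m}$, hence for $|z|=t\le\rho$, $|g(z)|(1-t^2)^{\alpha/2}\le M(t/\rho)^m\big((1-t^2)/(1-\rho^2)\big)^{\alpha/2}$. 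The decisive choice is $\rho=\sqrt{m/(m+\alpha)}$ — the critical radius of $\Bergmaninf$ — since it minimises $(t/\rho)^m(1-\rho^2)^{-\alpha/2}$. Writing $s:=t^2(m+\alpha)/m\in(0,1-C/m]$, the right-hand side becomes $M\,s^{m/2}\big(1+\tfrac{m(1-s)}{\alpha}\big)^{\alpha/2}$, which an elementary log-derivative computation shows is increasing in $s$; its maximum over the admissible range is therefore attained at $s=1-C/m$, and $(1-C/m)^{m/2}\le e^{-C/2}$ yields the stated bound.

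It then remains to assemble the two conclusions. For (i): take $M=1+\varepsilon<2$ and choose $C=C(\eta,\alpha)$ large enough that $2e^{-C/2}(1+C/\alpha)^{\alpha/2}\le\eta$ — possible because exponential decay dominates the polynomial factor — so that on the prescribed disk $|f(z)|(1-|z|^2)^{\alpha/2}\le|h(z)|(1-|z|^2)^{\alpha/2}+|g(z)|(1-|z|^2)^{\alpha/2}<\varepsilon+\eta$. For (ii): given $C>0$, put $q:=e^{-C/2}(1+C/\alpha)^{\alpha/2}$; since $\tfrac{d}{dC}\big(-C/2+\tfrac{\alpha}{2}\log(1+C/\alpha)\big)<0$ for $C>0$ while this expression vanishes at $C=0$, one has $q<1$. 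Then the choice $\varepsilon:=\tfrac{1-q}{2(1+q)}\in(0,1)$ and $\eta:=\tfrac{1-q}{2}\in(0,1)$, fed into the core estimate with $M=1+\varepsilon$, gives $|f(z)|(1-|z|^2)^{\alpha/2}\le\varepsilon+(1+\varepsilon)q=\tfrac{1+q}{2}=1-\eta$ on the disk.

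The main obstacle — indeed the only real content — is the core Schwarz estimate, and within it the identification of $\rho=\sqrt{m/(m+\alpha)}$ as the optimal comparison radius, so that the slightly smaller covering radius $\sqrt{(m-C)/(m+\alpha)}$ produces the clean, $m$-independent bound $e^{-C/2}(1+C/\alpha)^{\alpha/2}$ with exactly the right qualitative behaviour: tending to $0$ as $C\to\infty$ (needed for (i)) and strictly below $1$ for every fixed $C>0$ (needed for (ii)). Everything else is bookkeeping with the quotient norm and the elementary inequality $1-x\le e^{-x}$.
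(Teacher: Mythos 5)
Your proof is correct and follows essentially the same route as the paper: you extract from the quotient norm a decomposition $f=h+g$ with $\|h\|_{\alpha,\infty}<\varepsilon$ and $g$ vanishing to order $m$ at the origin, bound $g/z^m$ by the maximum principle on the critical circle of radius $\sqrt{m/(m+\alpha)}$, and exploit the gap between the weight $t^{-m}(1-t^2)^{-\alpha/2}$ at the critical and at the shrunk radius $\sqrt{(m-C)/(m+\alpha)}$, exactly as in the paper's argument with $\vartheta_{m,\alpha}$. Your explicit, uniform-in-$m$ constant $e^{-C/2}(1+C/\alpha)^{\alpha/2}$, obtained from $(1-C/m)^{m/2}\le e^{-C/2}$ together with the monotonicity in $s$, even streamlines the paper's $o(1)$ bookkeeping and the separate treatment of small $m$ in part (ii).
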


The result in (i) is of course of interest when $\eta+\varepsilon<1$, and in particular
when $f\in N_{0,m}^{\infty,\alpha}$ in which case we can pick $\varepsilon$ arbitrarily 
small.

Note that the critical radius $\sqrt{\frac{m}{m+\alpha}}$ is different from the one
appearing for $p=2$. We have already met this radius in Theorem \ref{thmcovering}.

\begin{proof}
\underline{Claim (i):}

	Since $\|f\|_{\Bergmaninf/N_{0,m}^{\infty,\alpha}}<\varepsilon$, there exist  a function $g\in N_{0,m}^{\infty,\alpha}$ such that
	\begin{enumerate}[label=(\alph*)]
		\item $\|f-g\|_{\Bergmaninf}\leq\varepsilon,$
		\item $g(z)=z^m h(z),$ where $h$ is a holomorphic function in the unit disk.
	\end{enumerate}
	Since $\|f\|_{\alpha,\infty}\leq 1$ we have the bound
	$$
	|g(z)|\leq |g(z)-f(z)|+|f(z)|\leq(1+\varepsilon)\frac{1}{(1-|z|^2)^{\frac{\alpha}{2}}}, \qquad z\in\mathbb{D}.
	$$
	and in terms of $h$ and the functions $\vartheta_{m_,\alpha}(t)=\log\dfrac{1}{t^m(1-t^2)^\frac{\alpha}{2}}$ 
	$$	
	|h(z)|\leq (1+\varepsilon)e^{\vartheta_{m_,\alpha}(|z|)}, \qquad z\in\mathbb{D}.
	$$
	Using the maximum principal 	we obtain
\[ 
	\max_{z\in D\left(0,\sqrt{\frac{m}{m+\alpha }}\right)}|h(z)|
	=(1+\varepsilon)e^{ \left[\vartheta_{m_,\alpha}\left(\sqrt{\frac{m}{m+\alpha }}\right)-\vartheta_{m_,\alpha}\left(\sqrt{\frac{m-C}{m+\alpha }}\right)+\vartheta_{m_,\alpha}\left(\sqrt{\frac{m-C}{m+\alpha }}\right)\right]}
\]
	Observe that
\begin{equation}\label{p32*} 
	    \vartheta_{m,\alpha}\left(\sqrt{\frac{m-C}{m+\alpha }}\right)-\vartheta_{m,\alpha}\left(\sqrt{\frac{m}{m+\alpha }}\right)
	      =      \frac{C}{2}+o(1)-\log  \left(\frac{\alpha+2C}{\alpha}\right)^{\frac{\alpha}{2}}.
\end{equation}
Since the term $o(1)$ goes to 0 when $m$ goes to infinity, and $m\ge C$, the
above expression can be made arbitrarily big. Let $\delta=\delta(C)$ be the 
corresponding constant (thus with $\lim_{C\to+\infty}\delta(C)=+\infty$),
we get 
	\begin{equation}\label{p32**}
	    \max_{z\in D\left(0,\sqrt{\frac{m}{m+\alpha }}\right)}|h(z)|\leq (1+\varepsilon)e^{-\delta}e^{ \vartheta_{m_,\alpha}\left(\sqrt{\frac{m-C}{m+\alpha }}\right)}.
	\end{equation}

	Since $\delta(C)\to+\infty$ when $C\to+\infty$ there exists a $C$ such that
$(1+\varepsilon)e^{-\delta}\leq \eta$. Then 
	\begin{equation*}
	|h(z)|\leq \eta e^{ \vartheta_{m_,\alpha}\left(\sqrt{\frac{m-C}{m+\alpha }}\right)}, \quad z\in \partial D\left(0,\sqrt{\frac{m}{m+\alpha }}\right).
	\end{equation*}
Now, by the maximum principal again, restricting the estimate to the smaller disk
$D\left(0,\sqrt{\frac{m-C}{m+\alpha }}\right)$, and using the fact that $\vartheta_{m,\alpha}$
is decreasing on $(0,\sqrt{\frac{m}{m+\alpha}})$ we get
\begin{equation}\label{eqn:boundofh}
    	|h(z)|\leq \eta e^{ \vartheta_{m_,\alpha}\left(\sqrt{\frac{m-C}{m+\alpha }}\right)}
 \le \eta e^{\vartheta_{m,\alpha}(|z|)}, \quad z\in  D\left(0,\sqrt{\frac{m-C}{m+\alpha }}\right).
\end{equation}
	Finally, for $z\in D\left(0,\sqrt{\frac{m-C}{m+\alpha }}\right)$
	\begin{align}\label{p32***}
	|f(z)|\left(1-|z|^2\right)^{\frac{\alpha}{2}}&\leq |f(z)-g(z)|\left(1-|z|^2\right)^{\frac{\alpha}{2}}+|g(z)|\left(1-|z|^2\right)^{\frac{\alpha}{2}}\nonumber\\
	&\leq\varepsilon +|z|^m\left(1-|z|^2\right)^{\frac{\alpha}{2}}|h(z)|\nonumber\\
	&=\varepsilon +e^{-\vartheta_{m_,\alpha}\left(|z|\right)}|h(z)|\nonumber\\
	&\leq\varepsilon+\eta.
	\end{align}

\underline{Claim (ii):}

The proof follows exactly the same lines and ideas.
First one should observe that given $C>0$, the difference appearing in \eqref{p32*} is
uniformly bounded from below by some $\delta>0$ (this is clear when $m$ is big, say
$m\ge m_0$, and for $1\le m<m_0$ we just take the smallest of finitely many 
stricly positive numbers). Then looking at \eqref{p32**}, we have to 
convince ourselves that there are $\varepsilon,\eta>0$ such that $(1+\varepsilon)e^{-\delta}
<1-\eta-\varepsilon$ which is easily seen to be true. 
Finally, the same estimates as in \eqref{p32***} lead to
\[
|f(z)|\left(1-|z|^2\right)^{\frac{\alpha}{2}}
 \le \varepsilon+(1-\eta-\varepsilon)=1-\eta.
\]
\end{proof}

\subsection{Sampling for $\mathcal{A}_\alpha^\infty$}
Now we are ready to establish our conditions for sampling conditions.
\begin{theorem}
Let $\alpha>0$.
\begin{enumerate}[label=(\alph*)]
\item If $X$ is a sampling divisor for $\Bergmaninf$, then there exists $0<C<\alpha$ such that
\begin{equation*}
\bigcup_{\lambda\in\Lambda}  D\left(\lambda,\sqrt{\frac{m_\lambda+C}{m_\lambda+\alpha }}\right)=\mathbb{D}.
\end{equation*}
\item Conversely, if there exists $C=C(S_X)>0$ such that for some compact $K$ of $\mathbb{D}$ we have
\begin{equation*}
\bigcup_{\lambda\in\Lambda, m_{\lambda>C}}  D\left(\lambda,\sqrt{\frac{m_\lambda-C}{m_\lambda+\alpha }}\right)=\mathbb{D}\setminus K,
\end{equation*}
then $X$ is a sampling divisor for $\Bergmaninf$.
\end{enumerate}
\end{theorem}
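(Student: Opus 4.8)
The plan is to mirror the proof of Theorem~\ref{ThmSamp2}, replacing the $L^2$-quantities by the weighted sup-norm and its quotient versions and feeding in the uniqueness statement Theorem~\ref{thmcovering} in place of Theorem~\ref{thmcovering2}. Two preliminary reductions are available. First, the M\"obius-induced operator $f\mapsto[\varphi_\lambda'(\cdot)]^{\alpha/2}f(\varphi_\lambda(\cdot))$ is an involutive isometry of $\Bergmaninf$ carrying $N_{0,m}^{\infty,\alpha}$ onto $N_{\lambda,m}^{\infty,\alpha}$ (using $(1-|z|^2)|\varphi_\lambda'(z)|=1-|\varphi_\lambda(z)|^2$), so every local statement may be proved after centering at the origin, exactly as when $p=2$. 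Second, the trivial bound $\|f\|_{\Bergmaninf/N_\lambda^\infty}\le\|f\|_{\alpha,\infty}$ makes the ``upper'' sampling inequality automatic, so only the lower bound $\|f\|_{\alpha,\infty}\lesssim\sup_{\lambda\in\Lambda}\|f\|_{\Bergmaninf/N_\lambda^\infty}$ has to be produced; in particular the constant $C$ in (b) need not be tied to any overlap constant.

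For (a) I would argue by contradiction: if for every $0<C<\alpha$ the union $\bigcup_\lambda D(\lambda,\sqrt{(m_\lambda+C)/(m_\lambda+\alpha)})$ misses a point, choose $C_j\uparrow\alpha$ and $z_j$ outside $\bigcup_\lambda D(\lambda,\sqrt{(m_\lambda+C_j)/(m_\lambda+\alpha)})$. Fixing an auxiliary $C_0\in(0,\alpha)$ and writing $r_\lambda'=\sqrt{(m_\lambda+C_0)/(m_\lambda+\alpha)}$, the triangle inequality for $\rho$ (as in the proof of Theorem~\ref{ThmSamp2}(a)) gives, for $j$ with $C_j>C_0$ and any $z\in D(\lambda,r_\lambda')$,
\[
\rho(z,z_j)\ \ge\ \frac{(C_j-C_0)(m_\lambda+\alpha)}{(2\alpha-C_j-C_0)m_\lambda+\alpha^2-C_jC_0}\ \ge\ 1-\frac{2(\alpha-C_j)}{\alpha-C_0},
\]
so $\rho(z,z_j)\to1$ as $j\to\infty$, uniformly in $m_\lambda$. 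I would then test the sampling inequality against the normalized peak functions $f_j(z)=(1-|z_j|^2)^{\alpha/2}(1-\overline{z_j}z)^{-\alpha}$, for which $\|f_j\|_{\alpha,\infty}=1$ and $(1-|z|^2)^{\alpha/2}|f_j(z)|=(1-\rho(z,z_j)^2)^{\alpha/2}$, using a local control of the quotient norm --- the $L^\infty$-analogue of Lemma~\ref{lemmacontrolnorm}, of the form $\|f\|_{\Bergmaninf/N_{\lambda,m}^{\infty,\alpha}}\le A(C_0)\sup_{z\in D(\lambda,r_\lambda')}(1-|z|^2)^{\alpha/2}|f(z)|$ with $A(C_0)$ independent of $\lambda$ and $m$. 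This forces $\sup_\lambda\|f_j\|_{\Bergmaninf/N_\lambda^\infty}\to0$, contradicting $1=\|f_j\|_{\alpha,\infty}\le L\sup_\lambda\|f_j\|_{\Bergmaninf/N_\lambda^\infty}$.

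For (b), note first that $\tfrac{m_\lambda-C}{m_\lambda+\alpha}<\tfrac{m_\lambda}{m_\lambda+\alpha+C}$ for every $m_\lambda$, so the hypothesis forces $\bigcup_\lambda D(\lambda,\sqrt{m_\lambda/(m_\lambda+\alpha+C)})$ to cover $\U$ up to a compact set; by Theorem~\ref{thmcovering} (with $\varepsilon=C$), $X$ is a uniqueness divisor, hence not a zero divisor. Now suppose $X$ is not sampling: pick $f_n\in\Bergmaninf$ with $\|f_n\|_{\alpha,\infty}=1$ and $\sup_\lambda\|f_n\|_{\Bergmaninf/N_\lambda^\infty}\to0$, let $\eta,\varepsilon_0\in(0,1)$ be the constants given by Lemma~\ref{lem:controlnorminf}(ii) for this $C$, and choose $w_n$ with $(1-|w_n|^2)^{\alpha/2}|f_n(w_n)|>1-\eta/2$. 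If infinitely many $w_n$ lie in $\U\setminus K$, each such $w_n$ belongs to some $D(\lambda_n,\sqrt{(m_{\lambda_n}-C)/(m_{\lambda_n}+\alpha)})$ with $m_{\lambda_n}>C$, and once $\|f_n\|_{\Bergmaninf/N_{\lambda_n}^\infty}<\varepsilon_0$ Lemma~\ref{lem:controlnorminf}(ii) (centered at $\lambda_n$) gives $(1-|w_n|^2)^{\alpha/2}|f_n(w_n)|\le1-\eta$, a contradiction. Otherwise $(w_n)$ stays in a compact subset of $\U$, and since $\{f_n\}$ is locally bounded a subsequence converges locally uniformly to some $f$ with $w_n\to w_0\in\U$; then $\|f\|_{\alpha,\infty}\le1$, $(1-|w_0|^2)^{\alpha/2}|f(w_0)|\ge1-\eta/2>0$ so $f\not\equiv0$, while $\sup_\lambda\|f_n\|_{\Bergmaninf/N_\lambda^\infty}\to0$ together with the sup-norm growth bound and Cauchy's inequalities force $f_n^{(j)}(\lambda)\to0$, hence $f^{(j)}(\lambda)=0$, for all $\lambda\in\Lambda$ and $j<m_\lambda$. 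So $f$ is a nonzero function of $\Bergmaninf$ vanishing on $X$, i.e.\ $X$ is a zero divisor, contradicting the first step.

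The only genuinely new ingredient is the local quotient-norm estimate used in (a): bounding $\|f\|_{\Bergmaninf/N_{\lambda,m}^{\infty,\alpha}}$ from above by a weighted sup of $f$ over the slightly enlarged disk $D(\lambda,r_\lambda')$ with a constant independent of $\lambda$ and of the (possibly unbounded) multiplicity $m$. I would establish it in the spirit of Lemma~\ref{lemmabetaestimates} and Lemma~\ref{lem:controlnorminf} --- working after centering at the origin and exploiting the monotonicity and size of $\vartheta_{m,\alpha}(t)=\log\frac{1}{t^m(1-t^2)^{\alpha/2}}$ together with the maximum principle. Securing this control \emph{uniformly} in $m$ is the main obstacle; everything else is a transcription of the $p=2$ arguments.
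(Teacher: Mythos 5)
Your part (b) is sound and follows essentially the paper's route: the covering hypothesis together with Theorem \ref{thmcovering} yields uniqueness, and the normal-families argument combined with Lemma \ref{lem:controlnorminf}(ii) rules out a normalized non-sampled sequence; your organization through the near-extremal points $w_n$ merely merges the paper's two cases and is fine. The problem is part (a). There the whole weight of the argument rests on the asserted ``local quotient-norm estimate'' $\|f\|_{\Bergmaninf/N^{\infty,\alpha}_{\lambda,m}}\le A(C_0)\sup_{z\in D(\lambda,r'_\lambda)}(1-|z|^2)^{\alpha/2}|f(z)|$ with $A(C_0)$ independent of $m$, which you do not prove and yourself call the main obstacle. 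This is not a routine transcription of Lemma \ref{lemmacontrolnorm}: the $L^2$ proof rests on the orthogonality of the monomials on subdisks (Lemma \ref{NormDiskr}), which has no sup-norm analogue, and the obvious substitutes are not uniform in $m$. Cauchy estimates on $|z|=r'_\lambda$ only give $|a_n|\lesssim (n+1)^{\alpha/2}\sup_{D(0,r'_\lambda)}(1-|z|^2)^{\alpha/2}|f|$ for $n<m$, and then estimating the quotient norm by the Taylor section $\sum_{n<m}a_nz^n$ via the triangle inequality (or via the embedding $\mathcal{A}^2_{\alpha-2}\subset\Bergmaninf$) loses a factor growing like a power of $m$; Taylor partial-sum projections are not uniformly bounded in sup norms. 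Since the multiplicities are unbounded, uniformity in $m$ is exactly the crux, so as written (a) has a genuine gap --- it is not even clear that the general lemma is true.

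Fortunately you only need such a bound for the explicit test functions $f_j=T_{z_j}1$, and for these it follows from a direct computation, which is what the paper does: after translating $\lambda$ to the origin, write, with $u=\varphi_\lambda(z_j)$ and $|u|\ge\sqrt{(m+C_k)/(m+\alpha)}$, the expansion $(1-\bar uz)^{-\alpha}=\sum_{n<m}\binom{-\alpha}{n}(-\bar u)^nz^n+z^mh(z)$, so that the quotient norm is at most $(1-|u|^2)^{\alpha/2}$ times the $\Bergmaninf$-norm of the polynomial part; using $\bigl|\binom{-\alpha}{n}\bigr|\asymp n^{\alpha-1}$ and $\sup_{0<x<1}(1-x^2)^{\alpha/2}x^n\asymp n^{-\alpha/2}$, that norm is $\lesssim\sum_{n<m}n^{\alpha/2-1}\lesssim m^{\alpha/2}$, whence $\|T_u1\|_{\Bergmaninf/N^{\infty,\alpha}_{0,m}}\lesssim\bigl((1-|u|^2)\,m\bigr)^{\alpha/2}\lesssim(\alpha-C_k)^{\alpha/2}$ uniformly in $m$, which tends to $0$ as $C_k\to\alpha$ and contradicts the sampling inequality. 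If you replace your unproven lemma by this computation (your pseudohyperbolic distance estimate is correct and can remain), part (a) closes and the whole argument matches the paper's.
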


\begin{proof}\ \\
(a)\textit{Necessary Condition.}\\
{%
 Suppose that for every $ 0<C<\alpha$, we have 
\begin{equation*}
\bigcup_{\lambda\in\Lambda}  D\left(\lambda,\sqrt{\frac{m_\lambda+C}{m_\lambda+\alpha }}\right)\neq\mathbb{D}.
\end{equation*}
Thus, there exists an increasing sequence of positive numbers $(C_k)$ tending to $\alpha$
and a sequence $(z_k)$ with $z_{k}\in\mathbb{D}$ such that :

\begin{equation*}
z_{k}\in\mathbb{D}  \setminus \left[  \bigcup_{\lambda\in\Lambda}  D\left(\lambda,\sqrt{\frac{m_\lambda+C_{k}}{m_\lambda+\alpha }}  \right)\right]  \subset 
\mathbb{D}\setminus \left[ \bigcup_{\lambda\in\Lambda}  D\left(\lambda,\sqrt{\frac{m_\lambda}{m_\lambda+\alpha }}\right)   \right].
\end{equation*}

 As in the proof of the necessary condition of the sampling theorem in the Hilbertian case,  we will show that
\[d_k:=\dist\Big(z_k, \bigcup_{\lambda\in\Lambda}  D\big(\lambda, \sqrt{\frac{m_\lambda}{m_\lambda+\alpha}   }\big)\Big) \rightarrow 1.\]
 Put 
 $   r_{\lambda, C_{k}}=\sqrt{\frac{m_{\lambda}+C_{k}}{m_{\lambda}+\alpha }}$ and $r_{\lambda}=\sqrt{\frac{m_{\lambda}}{m_{\lambda}+\alpha }}$. Let
\[    \zeta \in\bigcup_{\lambda\in\Lambda}  D\left(\lambda,\sqrt{\frac{m_\lambda}{m_\lambda+\alpha}}\right). \]
 Then there exists $\lambda_0\in \Lambda$ such that $\zeta\in D(\lambda_0,r_{\lambda_0})$ and $\rho(z_k,\lambda_{0}  ) > r_{\lambda_{0}, C_{k}}$, $k\geq 1$. Since $\alpha>0$,
we get as in the hilbertian situation
\begin{eqnarray*}
\rho(z_k,\zeta) & \geq & \frac{  \rho(z_k,\lambda_{0}  )-\rho(\zeta,\lambda_{0}) } {1-\rho(z_k,\lambda_{0}) \rho(\zeta,\lambda_{0})}  
   \geq   \frac{ (  r_{\lambda_{0}, C_{k}})^{2} - (r_{\lambda_{0}})^{2}  } {1-( r_{\lambda_0})^2( r_{\lambda_{0}, C_{k}} )^{2} }     \\
   & =&   \frac{  C_k( m_{\lambda_0}+\alpha )  } {  m_{\lambda_0}(2\alpha-C_k)+\alpha^2 }.
\end{eqnarray*}
Observe that this last expression is decreasing in $m_{\lambda_0}$ so that passing
to the limit $m_{\lambda_0}\to+\infty$, we get
\[
\rho(z_k,\zeta)\ge \frac{C_k}{2\alpha-C_k}.
\]
Thus, $d_k\ge C_k/(2\alpha-C_k)\to 1$ when $k\to+\infty$.
}

Now pick $f_k(z)=T_{z_k}(1)$. We will show that we cannot sample uniformly $f_k$
meaning that $\sup_{\lambda\in\Lambda}\|f_k\|_{\mathcal{A}^{\infty}_{\alpha}/
N_{\lambda,m_{\lambda}}^{\alpha,\infty}} \to 0$ (while $\|f_k\|_{\mathcal{A}^{\infty}_{\alpha}}=1$).
In view of the construction it is enough to show that when $u\ge\sqrt{\frac{m+C_k}{m+\alpha}}$, then
$\|T_{u}1\|_{\mathcal{A}^{\infty}_{\alpha}/N_{0,m}^{\alpha,\infty}} \to 0$ uniformly in $m$
when $C_k\to \alpha$.
Recall that
\[
 T_u1(z)=\left(\frac{1-|u|^2}{(1-\overline{u}z)^2}\right)^{\alpha/2} =
 (1-|u|^2)^{\alpha/2}\frac{1}{(1-\overline{u}z)^{\alpha}}.
\] 
Using the standard Taylor series for power functions we get
\[
\frac{1}{(1-\overline{u}z)^{\alpha} }
 =\sum_{n\ge 0}\binom{-\alpha}{n}(-\overline{u})^nz^n
 =\sum_{n=0}^{m-1}\binom{-\alpha}{n}(-\overline{u})^nz^n+z^mh(z)
 =f_0(z)+z^mh(z).
\]
The following etimate is well known
\[
 \binom{-\alpha}{n}=\frac{(-1)^n}{\Gamma(\alpha)n^{1-\alpha}}(1+o(1)).
\]
Hence
\[
 |f_0(z)| 
 \le C \sum_{n= 0}^{m-1}\frac{1}{n^{1-\alpha}}u^n|z|^n
\]
(we remind that $u>0$). Here $C$ is some irrelevant universal constant.
Hence
\[
\|T_{u}1\|_{\mathcal{A}^{\infty}_{\alpha}/N_{0,m}^{\alpha,\infty}}
 \le (1-|u|^2)^{\alpha/2}\|f_0\|_{\mathcal{A}^{\infty}}
 \le C(1-|u|^2)^{\alpha/2}
  \sup_{|z|<1}(1-|z|^2)^{\alpha/2}\sum_{n= 0}^{m-1}\frac{1}{n^{1-\alpha}}u^n|z|^n
\]
The function $\varphi_n(x)=(1-x^2)^{\alpha/2}x^n$ admits a maximum in 
$x_n=\sqrt{n/(n+\alpha/2)}$ which, up to a multiplicative constant, 
behaves like $1/n^{\alpha/2}$. Hence
\[
\|T_{u}1\|_{\mathcal{A}^{\infty}_{\alpha}/N_{0,m}^{\alpha,\infty}}
 \le C(1-|u|^2)^{\alpha/2} \sum_{n= 0}^{m-1}n^{\alpha/2-1}
 \le C(1-|u|^2)^{\alpha/2} m^{\alpha/2}
\]
where in the above inequalities $C$ are different universal constants.
On the other hand
\[
 (1-|u|^2)^{\alpha/2}\le\left(1-\frac{m+C_k}{m+\alpha}\right)^{\alpha/2}
 =\left(\frac{\alpha-C_k}{m+\alpha}\right)^{\alpha/2},
\]
so that
\[
\|T_{u}1\|_{\mathcal{A}^{\infty}_{\alpha}/N_{0,m}^{\alpha,\infty}}
 \le C\left(m\frac{\alpha-C_k}{m+\alpha}\right)^{\alpha/2}
 \le C(C_k-\alpha)^{\alpha/2}
\]
uniformly in $m$. Since
$C_k\to \alpha$ the above expression goes to 0 (uniformly in $m$), and we reach the
desired conclusion.
\\

(b)\textit{Sufficient Condition.}\\	
Suppose that there exists a sequence $(f_n)_n$ such that $\|f_n\|_{\alpha,\infty}=1$, and
\begin{equation*}
    \sup_{\lambda\in\Lambda}\|f_n\|_{\Bergmaninf/N_\lambda^\infty}\rightarrow0,\qquad n\rightarrow\infty.
\end{equation*}
 Passing to a sub-sequence converging uniformly on compact subsets denoted again by $(f_n)_n$, we have two possibilities: either (A) the sequence $(f_n)_n$ converges to $f\not=0$ or (B) the sequence $(f_n)_n$ converges to 0.\\

  (A): In this case $X$ is a zero divisor for $\Bergmaninf$. Then, by the Uniqueness Theorem \ref{thmcovering}, $\mathbb{D}\setminus
 \Big[ \bigcup_{\lambda\in\Lambda} D\left(\lambda,\sqrt{\frac{m_\lambda}{m_\lambda+\alpha+\varepsilon }}  \right)\Big] $ cannot be compact for any $\varepsilon>0$. On the other hand, for every $C>0$ and for every $0<\varepsilon<C$ we have
 \begin{equation}
 \sqrt{\frac{m_\lambda-C}{m_\lambda+\alpha }} < \sqrt{\frac{m_\lambda}{m_\lambda+\alpha+\varepsilon }}.
 \end{equation}
 This yields,
 \begin{equation*}
\mathbb{D}  \setminus \left[  \bigcup_{\lambda\in\Lambda}  D\left(\lambda,\sqrt{\frac{m_\lambda}{m_\lambda+\alpha+\varepsilon }}  \right)\right]  \subset 
\mathbb{D}\setminus \left[ \bigcup_{\lambda\in\Lambda}  D\left(\lambda,\sqrt{\frac{m_\lambda-C}{m_\lambda+\alpha }}\right)   \right].
\end{equation*} 
Therefore, for no $C>0$,  $\mathbb{D}\setminus \left[ \bigcup_{\lambda\in\Lambda}  D\left(\lambda,\sqrt{\frac{m_\lambda-C}{m_\lambda+\alpha }}\right)   \right] $ can be compact, contradicting the hypothesis. \\  

   (B): In this case, by contradiction we will assume that for some compact set $K\subset \mathbb{D}$ we have 
\begin{equation*}
    \Omega= \bigcup_{\lambda\in\Lambda} D\Big(\lambda,\sqrt{\frac{m_\lambda-C}{m_\lambda+\alpha }}\Big)=\mathbb{D}\setminus K. 
\end{equation*}
Since by assumption $(f_n)_n$ converges to 0 on compact subsets, there exists $n_0\in\mathbb{N}$ such that
\begin{equation*}
    \left|f_n(z)\right|\left(1-|z|^2\right)^{\frac{\alpha}{2}}<\frac{1}{2},\qquad z\in K,n\geq n_0.
\end{equation*}

Next, for the given $C$, Lemma \ref{lem:controlnorminf}(ii) implies the existence of
$\eta, \varepsilon>0$ ensuring a control on $f$.
Since  $ \sup_{\lambda\in\Lambda}\|f_n\|_{\Bergmaninf/N_\lambda^\infty}\rightarrow0$, 
there exists $n_1$ such that for $n\ge n_1$, these quotient norms are stricly smaller than
$\varepsilon$ (uniformly in $\lambda$) as required by the  lemma.
Since moreover $ \| f_{n}\|_{\alpha,\infty} =1$, 
Lemma \ref{lem:controlnorminf} implies that 
\[\left|f_n(z)\right|\left(1-|z|^2\right)^{\frac{\alpha}{2}}<1-\eta, \quad z\in{\bigcup_{\lambda\in\Lambda, m_{\lambda>C}}} D\Big(\lambda,\sqrt{\frac{m_\lambda-C}{m_\lambda+\alpha }}\Big).\]  
Hence, $\|f_n\|_{\alpha,\infty}<1$ for $n>\max(n_0,n_1)$ and we get a contradiction.

\end{proof}

}

{

\subsection{Interpolation for $\mathcal{A}_\alpha^\infty$}

We need the following result by  Berndtsson \cite[Theorem 4]{B} (see \cite[Theorem G]{BO95} ) for the uniform estimates in the $\dbar$-surgery.
\begin{theorem}\label{UnifBerndtsson}
Let $\psi$ be a subharmonic function and
\begin{equation*}
    \varphi(z)=\min\left\{(1-|z|)\Delta\psi(z),\frac{1}{1-|z|}\right\}.
\end{equation*}
Let $f$ be a function in $\U$ such that
\begin{equation*}
    \sup\frac{|f(z)|}{\varphi(z)}e^{-\psi(z)/2}<\infty.
\end{equation*}
Let $u\in L^2(\mathbb{D}, e^{-\psi}dm)$ be the canonical solution to $\dbar u=f$. Then 
\begin{equation*}
    \sup|u(z)|e^{-\widetilde\psi(z)/2}\leq \sup\frac{|f(z)|}{\varphi(z)}e^{-\psi(z)/2},
\end{equation*}
where $\widetilde{\psi}(z)=\sup_{|z-\zeta|<1/2(1-|z|)} \psi(\zeta)$.
\end{theorem}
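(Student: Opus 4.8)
The plan is to deduce the pointwise bound from a \emph{localized} $L^2$-estimate on the solution, the passage from $L^2$ to $L^\infty$ being a routine splitting into a local Cauchy--Pompeiu solution plus a holomorphic remainder. Normalize so that $M:=\sup_{z\in\U}\dfrac{|f(z)|}{\varphi(z)}\,e^{-\psi(z)/2}=1$, i.e.\ $|f(z)|\le\varphi(z)e^{\psi(z)/2}$ on $\U$; since $\varphi(z)=0$ wherever $\Delta\psi(z)=0$, we may work on $\{\Delta\psi>0\}$. Unwinding the $\min$ in the definition of $\varphi$ gives the two pointwise inequalities $\varphi(z)^2\le\Delta\psi(z)$ and $(1-|z|)\varphi(z)\le 1$: the first is what keeps $\int_\U\frac{|f|^2}{\Delta\psi}e^{-\psi}\,dm\le m(\U)<\infty$ bounded (this feeds H\"ormander's estimate), while the second is the scale at which a local solution can be controlled. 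Finally, the role of $\widetilde\psi$ is simply that $\psi\le\widetilde\psi(z_0)$ on the Euclidean ball of radius $\tfrac12(1-|z_0|)$ about $z_0$, so $e^{-\psi}$ is comparable to $e^{-\widetilde\psi(z_0)}$ at that hyperbolic scale --- precisely the scale on which all the local estimates below live.

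The $L^2$-to-$L^\infty$ step: fix $z_0\in\U$, put $R_0=c(1-|z_0|)$ for a small absolute constant $c$, and set $B_0=B(z_0,R_0)\subset B_0'=B(z_0,2R_0)\subset\{|\zeta-z_0|<\tfrac12(1-|z_0|)\}$; on $B_0'$ one has $\psi\le\widetilde\psi(z_0)$ and $\varphi(\zeta)\le(1-|\zeta|)^{-1}\lesssim R_0^{-1}$. Solve $\dbar u_0=f$ on $B_0'$ by $u_0(z)=\int_{B_0'}\frac{f(\zeta)}{z-\zeta}\,dm(\zeta)$; from $|f|\lesssim R_0^{-1}e^{\widetilde\psi(z_0)/2}$ on $B_0'$, $\int_{B_0'}\frac{dm(\zeta)}{|z_0-\zeta|}\lesssim R_0$, and the $L^2(B_0)$-boundedness of the Cauchy transform on a disk of radius $R_0$, I get $|u_0(z_0)|\lesssim e^{\widetilde\psi(z_0)/2}$ and $\|u_0\|_{L^2(B_0)}\lesssim R_0\,e^{\widetilde\psi(z_0)/2}$. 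The remainder $g:=u-u_0$ is holomorphic on $B_0'$, so the area mean value property gives $|g(z_0)|\le m(B_0)^{-1/2}\|g\|_{L^2(B_0)}\lesssim R_0^{-1}\|u\|_{L^2(B_0)}+e^{\widetilde\psi(z_0)/2}$, and $\|u\|_{L^2(B_0)}^2\le e^{\widetilde\psi(z_0)}\int_{B_0}|u|^2e^{-\psi}\,dm$. Altogether
\[
 |u(z_0)|\,e^{-\widetilde\psi(z_0)/2}\ \lesssim\ 1+\frac{1}{1-|z_0|}\Big(\int_{B(z_0,\,c(1-|z_0|))}|u|^2e^{-\psi}\,dm\Big)^{1/2},
\]
so the whole theorem reduces to the localized estimate $\int_{B(z_0,\,c(1-|z_0|))}|u|^2e^{-\psi}\,dm\lesssim (1-|z_0|)^2$ (and then tracking constants carefully, keeping $\varphi$ in place of $(1-|z|)^{-1}$ throughout the Cauchy-transform step, upgrades $\lesssim$ to the stated constant $1$).

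The hard part is this localized $L^2$-estimate, and it is where Berndtsson's argument is genuinely needed: the global H\"ormander bound $\int_\U|u|^2e^{-\psi}\,dm\lesssim 1$ does not localize, and plugging it into the display only yields a useless factor $(1-|z_0|)^{-1}$; nor can one simply run H\"ormander against the weight $\psi-2\log(1-|z|^2)$, whose Laplacian $\Delta\psi-8(1-|z|^2)^{-2}$ need not be positive. The resolution I would follow is Berndtsson's twisted $\dbar$-estimate (of Donnelly--Fefferman type) with a multiplier adapted to the Poincar\'e metric --- morally, solving with an effective curvature $\Delta\psi+c(1-|z|)^{-2}$, which is $\ge\varphi^2$ pointwise by the two inequalities above --- producing a solution with $\int_\U(1-|z|)^{-2}|u|^2e^{-\psi}\,dm\lesssim \int_\U\frac{|f|^2}{\Delta\psi+c(1-|z|)^{-2}}e^{-\psi}\,dm\lesssim m(\U)<\infty$; since $(1-|z|)\asymp(1-|z_0|)$ on $B(z_0,c(1-|z_0|))$, this is exactly $\int_{B(z_0,c(1-|z_0|))}|u|^2e^{-\psi}\,dm\lesssim (1-|z_0|)^2$. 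The remaining bookkeeping is to check that the solution produced by the twisted estimate may be taken to be the canonical one (alternatively, that its holomorphic correction onto $A^2(\U,e^{-\psi})$ is controlled on each $B_0$ by the same covering argument), after which the display finishes the proof.
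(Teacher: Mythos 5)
The paper does not prove this statement at all: it is imported verbatim from Berndtsson \cite[Theorem 4]{B} (see also \cite[Theorem G]{BO95}), so your proposal has to be measured against Berndtsson's argument rather than anything in this text. The outer layer of your plan is sound and is indeed the standard one: the normalization, the two pointwise consequences $\varphi^2\le\Delta\psi$ and $(1-|z|)\varphi\le 1$, and the reduction via a local Cauchy--Pompeiu solution plus the sub-mean-value property of the holomorphic remainder to the localized estimate $\int_{B(z_0,c(1-|z_0|))}|u|^2e^{-\psi}\,dm\lesssim(1-|z_0|)^2$, with $\widetilde\psi(z_0)$ majorizing $\psi$ at exactly that scale, are all correct.

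The genuine gap is the step you dismiss as ``remaining bookkeeping''. A twisted (Donnelly--Fefferman type) estimate produces \emph{some} solution $v$ with $\int_{\U}(1-|z|)^{-2}|v|^2e^{-\psi}\,dm\lesssim 1$; it cannot simply ``be taken to be the canonical one'', since the minimizer in the twisted norm and the minimizer in $L^2(e^{-\psi}dm)$ differ by a nonzero holomorphic function $h=u-v=-P_\psi v$ in general. The only information you then have on $u$ is the global bound $\|u\|_{L^2(e^{-\psi})}\le\|v\|_{L^2(e^{-\psi})}\lesssim 1$, and feeding that into your display re-introduces exactly the useless factor $(1-|z_0|)^{-1}$ you yourself flagged; and local control of the weighted Bergman projection $P_\psi$ for a completely general subharmonic $\psi$ (no regularity, no doubling of $\Delta\psi$) is not a covering-argument triviality --- it is essentially equivalent to the theorem being proved. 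Berndtsson's actual proof avoids this circularity by establishing the two-weight $L^2$ inequality \emph{directly for the canonical solution}: one proves, by the duality/integration-by-parts argument in which the orthogonality of $u$ to holomorphic functions in $L^2(e^{-\psi})$ is used inside the estimate, a bound of the form $\int|u|^2e^{\sigma-\psi}\,dm\lesssim\int\frac{|f|^2}{\Delta\psi+\Delta\sigma}e^{\sigma-\psi}\,dm$ for an auxiliary weight $\sigma$ with self-bounded gradient and $\Delta\sigma\asymp(1-|z|)^{-2}$ (a small multiple of $-\log(1-|z|^2)$ works); this yields your localized estimate for $u$ itself, and only then does the $L^2$-to-$L^\infty$ step close the argument. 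Finally, your remark that careful tracking ``upgrades $\lesssim$ to the stated constant $1$'' is unsupported --- the Cauchy-transform and sub-mean-value steps each cost absolute constants; Berndtsson's theorem carries an absolute constant $C$ (the constant-free form quoted in the paper is a loose citation, and only the form with a constant is used in Section 4), so you should not attempt to prove the constant-$1$ statement.
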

The corresponding result for interpolation in $\mathcal{A}^{\infty}_{\alpha}$ reads as 
follows.
\begin{theorem}
Let $\alpha>0$.
\begin{enumerate}[label=(\alph*)]
\item 
If $X$ is an interpolating divisor for $\Bergmaninf$, then there exists $C_X>0$ such that the hyperbolic disks $$\left\{ D\left(\lambda,\sqrt{\frac{m_\lambda-C_X}{m_\lambda+\alpha}}\right)\right\}_{\lambda\in\Lambda,m_\lambda>C_X}$$ are pairwise disjoint.
\item 
Conversely, if for some  $C_X$ such that   $\alpha(1-e^{-1})<C_X<\alpha$,  the hyperbolic disks    $$ \left\{ D\left(\lambda,\sqrt{\frac{m_\lambda+C_X}{m_\lambda+\alpha}}\right)\right\}_{\lambda\in\Lambda}$$ are pairwise disjoint, then $X$ is an  interpolating divisor for $\Bergmaninf$.
\end{enumerate}
\end{theorem}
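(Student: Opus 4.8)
The strategy is to transcribe the proof of Theorem \ref{ThmInter2} verbatim, replacing the Hilbertian ingredients by their $\mathcal{A}^\infty_\alpha$-counterparts: the local control of Lemma \ref{lemmamenorradio} by Lemma \ref{lem:controlnorminf}, and Ohsawa's $L^2$ estimate for $\dbar$ by Berndtsson's uniform estimate, Theorem \ref{UnifBerndtsson}. Under this dictionary the critical radius $\sqrt{m_\lambda/(m_\lambda+\alpha+1)}$ becomes $\sqrt{m_\lambda/(m_\lambda+\alpha)}$, and the admissible window $(\alpha+1)(1-e^{-1})<C_X<\alpha+1$ becomes $\alpha(1-e^{-1})<C_X<\alpha$.

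\textbf{Necessity (a).} We would argue by contraposition. Assume that for \emph{every} $C>0$ the disks $\{D(\lambda,\sqrt{(m_\lambda-C)/(m_\lambda+\alpha)})\}_{m_\lambda>C}$ fail to be pairwise disjoint, and let $M_X$ be the (hypothetical) interpolation constant. First choose $\eta,\varepsilon>0$ with $(2M_X+1)(\eta+\varepsilon)<1$, and let $C_0=C(\eta,\varepsilon)$ be the constant furnished by Lemma \ref{lem:controlnorminf}(i). By the non-disjointness at level $C_0$ there are $\lambda_1\neq\lambda_2$ in $\Lambda$ with $m_{\lambda_1},m_{\lambda_2}>C_0$ and a point $w$ lying in the intersection of $D(\lambda_1,\sqrt{(m_{\lambda_1}-C_0)/(m_{\lambda_1}+\alpha)})$ and $D(\lambda_2,\sqrt{(m_{\lambda_2}-C_0)/(m_{\lambda_2}+\alpha)})$. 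Interpolating the data that is $0$ at every point except $\lambda_2$, where it is $T_w1$ (admissible since $\|T_w1\|_{\mathcal{A}^\infty_\alpha/N_{\lambda_2}^\infty}\le\|T_w1\|_{\alpha,\infty}=1$), one gets $f\in\mathcal{A}^\infty_\alpha$ with $\|f\|_{\alpha,\infty}\le M_X$, $f\in N_{\lambda_1}^\infty$ and $f-T_w1\in N_{\lambda_2}^\infty$. Transporting by $T_{\lambda_1}$ and $T_{\lambda_2}$ (isometries of $\mathcal{A}^\infty_\alpha$ carrying $N_{0,m}^{\infty,\alpha}$ onto $N_{\lambda,m}^{\infty,\alpha}$) and applying Lemma \ref{lem:controlnorminf}(i) to $f/M_X$ near $\lambda_1$ and to $(f-T_w1)/(M_X+1)$ near $\lambda_2$ — both have vanishing quotient norm, so $\varepsilon$ is free — we would obtain $|f(w)|(1-|w|^2)^{\alpha/2}\le(\eta+\varepsilon)M_X$ and $|(f-T_w1)(w)|(1-|w|^2)^{\alpha/2}\le(\eta+\varepsilon)(M_X+1)$. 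Since $|T_w1(w)|(1-|w|^2)^{\alpha/2}=1$, the triangle inequality yields $1\le(\eta+\varepsilon)(2M_X+1)<1$, a contradiction.

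\textbf{Sufficiency (b).} Here we would run the $\dbar$-scheme. Put $r_\lambda=\sqrt{m_\lambda/(m_\lambda+\alpha)}$, $r_\lambda'=\sqrt{(m_\lambda+C_X)/(m_\lambda+\alpha)}$ (well-defined since $C_X<\alpha$), $D_\lambda=D(\lambda,r_\lambda)$, $D_\lambda'=D(\lambda,r_\lambda')$; by hypothesis the $D_\lambda'$ are pairwise disjoint. Given data $(f_\lambda)$ with $\sup_\lambda\|f_\lambda\|_{\mathcal{A}^\infty_\alpha/N_\lambda^\infty}\le1$, pick representatives $h_\lambda\in\mathcal{A}^\infty_\alpha$ with $h_\lambda-f_\lambda\in N_\lambda^\infty$ and $\|h_\lambda\|_{\alpha,\infty}\le2$, and form the smooth interpolant $F(z)=\sum_\lambda h_\lambda(z)\,\eta(|\varphi_\lambda(z)|-r_\lambda')$ with the same cut-off $\eta=\eta_\lambda$ as in the $p=2$ proof, so $|\eta'|\lesssim(r_\lambda'-r_\lambda)^{-1}\asymp(m_\lambda+\alpha)/C_X$. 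Disjointness of the $D_\lambda'$ makes $F$ a single term at each point, so $\|F\|_{\alpha,\infty}\le2$, $F=h_\lambda$ on $D_\lambda$, and $\text{supp}\,\dbar F\subset\bigcup_\lambda(D_\lambda'\setminus D_\lambda)$ with $|\dbar F(z)|\lesssim \dfrac{|h_\lambda(z)|}{r_\lambda'-r_\lambda}\cdot\dfrac{1-r_\lambda^2}{1-|z|^2}\lesssim(1-|z|^2)^{-1-\alpha/2}$ there (using \eqref{p25*} and $1-|\varphi_\lambda(z)|^2\le1-r_\lambda^2=\alpha/(m_\lambda+\alpha)$). We then solve $\dbar u=\dbar F$ against the weight $\psi(z)=\alpha\log\frac1{1-|z|^2}+w_\Lambda(z)$, where $w_\Lambda$ is the $\mathcal{A}^\infty_\alpha$-analogue of the weight of Lemma \ref{TheWeightedW} (the same construction with $\alpha+1$ replaced by $\alpha$): it satisfies $w_\Lambda\le0$, $-w_\Lambda\le\alpha$ on $D_\lambda'\setminus D_\lambda$, $w_\Lambda=m_\lambda\log|\varphi_\lambda|^2+C_\lambda$ on $D_\lambda$, and $\widetilde\Delta w_\Lambda\ge-4\alpha/K$ on $D_\lambda'\setminus D_\lambda$ with $K\ge\log\frac{\alpha}{\alpha-C_X}$, so $K>1$ exactly when $C_X>\alpha(1-e^{-1})$. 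Then $\psi$ is subharmonic, $\widetilde\Delta\psi\ge4\alpha(1-1/K)>0$ on the annuli, so Berndtsson's auxiliary function $\varphi$ obeys $\varphi(z)\asymp(1-|z|)^{-1}$ there; combined with $e^{-\psi(z)/2}\lesssim(1-|z|^2)^{\alpha/2}$ and the bound on $\dbar F$, the hypothesis $\sup_z|\dbar F(z)|\varphi(z)^{-1}e^{-\psi(z)/2}<\infty$ of Theorem \ref{UnifBerndtsson} holds, and the canonical solution exists (H\"ormander, the annuli having total finite area). That theorem yields $u\in L^2(\U,e^{-\psi}dm)$ with $\sup_z|u(z)|e^{-\widetilde\psi(z)/2}<\infty$; since $1-|\zeta|^2\asymp1-|z|^2$ on $\{|\zeta-z|<\tfrac12(1-|z|)\}$ and $w_\Lambda\le0$, one gets $\widetilde\psi(z)\le\alpha\log\frac1{1-|z|^2}+O(1)$, hence $u\in\mathcal{A}^\infty_\alpha$ and $f:=F-u\in\mathcal{A}^\infty_\alpha$. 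Finally, near $\lambda$ one has $e^{-\psi(z)}\gtrsim(1-|z|^2)^\alpha|\varphi_\lambda(z)|^{-2m_\lambda}$, so $u\in L^2(e^{-\psi}dm)$ together with holomorphy of $u$ on $D_\lambda$ forces $u$ to vanish to order $m_\lambda$ at $\lambda$; thus $f$ and $F$, and therefore $f$ and $f_\lambda$, share the germ at $\lambda$, i.e.\ $f-f_\lambda\in N_\lambda^\infty$, and $X$ is interpolating.

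\textbf{Main obstacle.} The delicate part is the sufficiency, namely constructing and estimating the weight $w_\Lambda$ so that $\psi$ simultaneously (i) carries the poles $m_\lambda\log|\varphi_\lambda|^2$ (to enforce the vanishing of $u$ and hence the interpolation), (ii) has a strictly positive lower bound $\widetilde\Delta\psi\ge\varepsilon_0>0$ on the transition annuli $D_\lambda'\setminus D_\lambda$ (so that Berndtsson's $\varphi$ does not degenerate and $\dbar F$ is admissible), and (iii) satisfies $\widetilde\psi(z)\le\alpha\log\frac1{1-|z|^2}+O(1)$ so that the correction stays in $\mathcal{A}^\infty_\alpha$. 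Requirement (ii) is precisely what pins down the window $\alpha(1-e^{-1})<C_X<\alpha$, and re-running the estimates of Lemma \ref{TheWeightedW} — in particular the lower bound $K\ge\log\frac{\alpha}{\alpha-C_X}$ on the normalizing integral — is the computational heart of the argument; the necessity direction, by contrast, is a routine transcription of the Hilbertian one via Lemma \ref{lem:controlnorminf}.
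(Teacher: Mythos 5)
Your proposal is correct and follows essentially the same route as the paper: necessity via the local control of Lemma \ref{lem:controlnorminf} transported by the isometries $T_\lambda$ and tested against the data $T_w1$, and sufficiency via the smooth cut-off interpolant, the singular weight of Lemma \ref{TheWeightedW} with $\alpha+1$ replaced by $\alpha$ (giving $K\geq\log\frac{\alpha}{\alpha-C_X}>1$ exactly for $\alpha(1-e^{-1})<C_X<\alpha$), and Berndtsson's Theorem \ref{UnifBerndtsson}. The only organizational difference is that the paper truncates to the first $N$ points and passes to the limit by Montel's theorem, whereas you treat the full divisor at once and justify the existence of the canonical solution by a H\"ormander-type a priori bound, which indeed works since $\Delta\psi\gtrsim(1-|z|^2)^{-2}$ on the pairwise disjoint, finite-area annuli supporting $\dbar F$.
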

\begin{proof}
{\it Necessary part}

 Let $X=\{(\lambda,m_\lambda)\}_{\lambda\in\Lambda}$ be an interpolating divisor and assume that the discs $$\left\{ D\left(\lambda,\sqrt{\frac{m_\lambda-C_X}{m_\lambda+\alpha}}\right)\right\}_{\lambda\in\Lambda, m_\lambda>C_X}$$ are not pairwise disjoint for any $C_X$. Arguing as in the proof of Theorem  \ref{ThmInter2}, we see that there exist $\lambda, \lambda'\in\Lambda$ and $w\in\U$ such that
\begin{equation*}
     D\left(w,\epsilon\right)\subset  D\left(\lambda,\sqrt{\frac{m_\lambda-C_X+1}{m_\lambda+\alpha}}\right)\cap  D\left(\lambda',\sqrt{\frac{m_{\lambda'}-C_X+1}{m_{\lambda'}+\alpha}}\right).
\end{equation*}
Since $X$ is an interpolating divisor, there exists a function $f\in\Bergmaninf$ such that
	\begin{center}
	\begin{enumerate}[label=(\alph*)]
		\item $f\in N_{\lambda,m_\lambda}^\infty$,
		\item $f-T_w 1\in N_{\lambda',m_{\lambda'}}^\infty$,
		\item $\|f\|_{\alpha,\infty}\leq M_X$.
	\end{enumerate}
	\end{center}
Let us denote $\|\cdot\|_{\infty,U}$ the norm with a supremum taken in the set $U\subset\U$. By Lemma \ref{lem:controlnorminf}(i) applied to $f$ and $f-T_w 1$ we have 
\begin{equation*}
	\|f\|_{\infty,  D\left(\lambda,\sqrt{\frac{m_\lambda-C_X+1}{m_\lambda+\alpha}}\right)}+\|f-T_w1\|_{\infty,  D\left(\lambda',\sqrt{\frac{m_{\lambda'}-C_X+1}{m_{\lambda'}+\alpha}}\right)}<2\eta, 
\end{equation*}
where we can pick $\eta<1/2$ when $C_X$ is sufficiently big (note that since $f$ and $f-T_w1$
are zero in the corresponding quotient spaces, we can consider $\varepsilon=0$).
Therefore
\begin{equation*}
\|f\|_{\infty, D(w,\varepsilon)}+\|f-T_w1\|_{\infty, D(w,\varepsilon)}<2\eta
\end{equation*}
However
\begin{equation*}
\sup_{z\in D(w,\varepsilon)}\|T_w1\|_{\infty, D(w,\varepsilon)}=1,
\end{equation*}
so $X$ cannot be interpolating.\\

{\it Sufficient part}

Here we use the same scheme as in the $L^2$-case: we construct a smooth interpolating function and we modify it to obtain a holomorphic one. However, now we need an $L^\infty$-estimate for the solution to the $\dbar$-equation which will be provided by Theorem \ref{UnifBerndtsson}.

Let $(\rho_{\lambda_j})_{j\geq1}$ be holomorphic data (polynomials) with $\sup_j \|\rho_{\lambda_j}\|_{\alpha,\infty} \leq 1$. Given any $N\geq 1$ we look for functions $f_N\in\Hol(\U)$ and $M$ independent of $N$ such that
\begin{align*}
f_N-\rho_j\in N_{\lambda_j}^\infty,& \quad j=1,\ldots,N;\\
\|f_N\|_{\alpha,\infty}\leq M,& \quad \forall N\in\N.
\end{align*}
Then, by Montel's theorem, the limit $f=\lim_{N} f_N$ gives the desired result. 
For this set, $m_j=m_{\lambda_j}$, 
$$
D_j=D\left(\lambda_j,r_j\right), \quad r_j=\sqrt{\frac{m_j}{m_j+\alpha}} \quad \text{ and } \quad 
D_j^{'}=D\left(\lambda_j,r_j'\right), \quad r_j'= \sqrt{\frac{m_j+C_X}{m_j+\alpha}}.
$$

Define  the  smooth interpolating function,
\begin{equation*}
F_N(z)=\sum_{j=1}^N\rho_j(z)\eta(|\varphi_{\lambda_j}(z)|-r_j'),
\end{equation*}
where  $\eta=\eta_\lambda$ is a smooth cut-off function on $\R$, with 
\begin{enumerate}[label=(\alph*)]
	\item $\text{supp }\eta\subset(-\infty,0],$\label{etaprop1}
	\item $\eta\equiv 1$ on $(-\infty,r_j-r_j'],$ \label{etaprop2}
	\item $|\eta'|\lesssim \frac{1}{r_j'-r_j}\simeq \frac{m_j+\alpha}{C_X}$. \label{etaprop3}
\end{enumerate}

By the separation hypothesis we have
 \[ \text{supp } F_{N} \subset \bigcup_{j=1}^{N}  D_j' \subset \bigcup_{j\geq 1} D_j'.\]
 Furthermore, $F_N$ has the characteristic growth of $\Bergmaninf$, due to the property \ref{etaprop2} and the separation hypothesis again, Namely 
 \begin{align}\label{F_N growth}
    \sup_{z\in\mathbb{D}} \left(1-|z|^2\right)^{\frac{\alpha}{2}} |F_N(z)|&=\max_{1\leq j\leq N}\sup_{z\in\mathbb{D}} \left(1-|z|^2\right)^{\frac{\alpha}{2}} |\rho_j(z)|\\
  &  \leq \sup_{j\geq 1}  \|\rho_j\|_{\Bergmaninf}\leq 1
 \end{align}
uniformly in $N$. On the other hand, 
\begin{equation*}
\text{supp }\dbar F_{N} \subset \bigcup_{j=1}^{N} \left( D_j' \setminus  D_j \right).
\end{equation*}
And 
$$
\dbar F_N(z)=\sum_{j\leq N} \rho_j(z)\eta'(|\varphi_{\lambda_j}(z)|-r_j') \dbar {|\varphi_{\lambda_j}(z)|}\chi_{D_j' \setminus  D_j }(z).
$$
Hence, for $z\in D_j'\setminus  D_j$, as in \eqref{p25*},
\begin{align*}
    |\overline{\partial}F_{N}(z)| &=|\rho_j(z)|\left| \eta' \left(|\varphi_{\lambda_j}(z)|-r_j' \right) \right|  \left| \overline{\partial}{ |\varphi_{\lambda_j}(z)}\right|\\
    &\leq \frac{1}{2} |\rho_j(z)|  \|\eta'\|_{\infty} \frac{ 1-|\varphi_{\lambda_j}(z)|^2  } {1-|z|^2} .
\end{align*}
Therefore, by  \ref{etaprop2} and \ref{etaprop3}, we get for $z\in D_j'\setminus  D_j$ 
\begin{align*}
    |\overline{\partial}F_{N}(z)| \left(1-|z|^2\right)^{\frac{\alpha}{2}+1}  & \lesssim \|\rho_j\|_{\alpha,\infty}  \|\eta'\|_{\infty} ( 1-|\varphi_{\lambda_j}(z)|^2 ) \\
       &\lesssim \|\rho_j\|_{\alpha,\infty}   \frac{1-r_{j}^{2}}{r_j'-r_j}\\
      & \lesssim \frac{2(\alpha+1)}{C_X} \|\rho_j\|_{\alpha,\infty},
\end{align*}
where we have used a similar estimate as in \eqref{p27*}.

This leads to
\begin{equation}\label{new_weighted}
   \sup_{z\in\mathbb{D}} \frac{ |\overline{\partial}F_{N}(z)|  }{\frac{1}{1-|z|^2}}  e^{-\frac{\alpha}{2}\log(\frac{1}{1-|z|^{2}})}\lesssim\sup_{j\geq 1}  \|\rho_j\|_{\Bergmaninf}<\infty,
\end{equation}
where underlying constants are independant on $N$.

Again the holomorphic interpolating function in $\mathcal{A}^{\infty}_{\alpha}$ will be obtained {\it via} the solution to a  $\overline{\partial}$-problem:
$f_{N}=F_{N}-u_{N}$, where $\overline{\partial} u_{N}=\overline{\partial} F_{N}$ with the conditions
\begin{equation*}
\sup_{z\in\mathbb{D}} |u_N(z)| \left(1-|z|^2\right)^{\frac{\alpha}{2}}<\infty,
\end{equation*}
and
\begin{equation*}
\partial^{k} u_N(\lambda_{j})=0,\ \forall k<m_{\lambda_{j} }.
\end{equation*}
The last condition ensures that $\partial^{k} f_{N}(\lambda_{j})=\partial^{k} F_{N}(\lambda_{j})$,  for  $k<m_{\lambda_{j}}$,  and then 
\begin{equation}\label{Derivativeufrho}
 \partial^{k} \big( F_{N}-u_{N}-\rho_{j}\big)(\lambda_j)=0, \quad k<m_{\lambda_{j}}, 1\leq j\leq N.
\end{equation}

{
We will use a similar weight function $w$ as in \eqref{p23*} 
where now}
\[
 r_\lambda=\sqrt{\frac{m_\lambda}{m_\lambda+\alpha}} , 
\quad \text{and}\quad  r_\lambda'= \sqrt{\frac{m_\lambda+C_X}{m_\lambda+\alpha}}.
\]
More precisely, set
\begin{equation*}
 w:=w_{\Lambda,\alpha,N}(z)=\sum_{j=1}^{N} m_{\lambda_j} \left[ E(.) - E\star\xi_{\lambda_j}(.)\right]  (\varphi_{\lambda_j}(z)) \chi_{D_{\lambda_j} '}(z),\quad z\in\U, 
\end{equation*} 
where   $E(z)=\log|z|^{2}$, and for  $\lambda \in\Lambda$
$$
\xi_{\lambda}(\zeta)= \left\{
    \begin{array}{lll}
             0  &   \mbox{if }  0\leq|\zeta|< r_{\lambda}, \\
         \dfrac{\log \frac{1}{|\zeta|^{2}} }{ K(m_\lambda,C_X,\alpha)}  & \mbox{if }    r_{\lambda}<|\zeta|<  r_{\lambda}^{'}, \\
            0               &\mbox{if }  |\zeta|>  r_{\lambda}^{'}.
    \end{array}
\right.
$$
Again $K:=K(m_{\lambda},C_X,\alpha)=\int_{r_\lambda<|\zeta|<r'_\lambda}\log (1/|\zeta|^2)d\V(\zeta)$. As in \eqref{p24*} we see that $K\ge \log(\alpha/(\alpha-C_X))>1$ when
$C_X>\alpha(1-e^{-1})$. Let 
$$
\psi(z):=\psi_{\Lambda,\alpha}(z)=\alpha \log\frac{1}{1-|z|^2} +w(z), \quad z\in\U.
$$
By the same arguments as in the proof of lemma \ref{TheWeightedW}, we have 
\begin{itemize} 
 \item[(a)]  $w\leq 0$,  
 \item[(b)] $-w\leq A(\alpha)$ in $D'_j\setminus D_j$,
 \item[(c)] $\widetilde{\Delta} w \geq -4(\alpha-\varepsilon)$ for some 
$\varepsilon$ depending on $C_X$.
\end{itemize}
Clearly from (a) and the definition of $\psi$ we have
$ \psi(z)\leq \alpha \log\frac{1}{1-|z|^2}$ for $z\in \U$, and with (c) we get
that under the condition $ \alpha (1-e^{-1})<C_X<\alpha$, for every $z\in\U$, 
\[ 
(1-|z|^2)\Delta\psi(z)=\frac{ \widetilde{\Delta} \psi (z) }{1-|z|^{2}} \gtrsim \frac{\varepsilon(C_X)}{1-|z|^{2}}.  
\] 
Thus,  
$$
\varphi(z):=\min\{  (1-|z|)\Delta\psi(z), \frac{1}{1-|z|} \} \asymp \frac{1}{1-|z|^{2}},\qquad z\in\U. 
$$

Now applying Theorem \ref{UnifBerndtsson}, we see that the there exists $u_N\in L^2(\U, e^{-\psi}dm)$, a canonical solution of the $\dbar$-equation $\dbar u_{N}=\dbar F_{N}$ satisfying
$$
\sup_{z\in\U} |u_{N}(z)|e^{-\frac{1}{2}\widetilde{\psi(z)}} \leq \sup_{z\in\U} \frac{|\dbar F_{N}(z)|}{\varphi(z)}   e^{  -\frac{1}{2}\psi(z)  },
$$ 
where $\widetilde{\psi(z)}:= \sup_{{|z-\zeta|<1/2(1-|z|)}}  \psi(z)$. By \eqref{new_weighted} and (b)
\begin{align*}
\sup_{z\in\U} \frac{|\dbar F_{N}(z)|}{\varphi(z)}   e^{  -\frac{1}{2}\psi(z)  } & \lesssim \sup_{z\in\U}  \frac{ |\overline{\partial} F_{N}(z)|  }{\frac{1}{1-|z|^{2}} }  e^{-\frac{\alpha}{2}\log(\frac{1}{1-|z|^{2}})-w(z)}\\
&\lesssim \sup_{j=1,\ldots,N}\sup_{z\in D'_j\setminus D_j }    \frac{ |\overline{\partial}F_{N}(z)|  }{ \frac{1}{1-|z|^2} }  e^{-\alpha \log\frac{1}{1-|z|^2}}e^{A(\alpha)}\\
&\lesssim\sup_{j\geq 1}  \|\rho_j\|_{\Bergmaninf}\lesssim 1.
\end{align*}
Thus, uniformly  in $N$
$$
\sup_{z\in\U} |u_{N}(z)|e^{-\frac{1}{2}\widetilde{\psi(z)}} <\infty.
$$ 
On the other hand,  by (a) 
\begin{align*}
\widetilde{\psi(z)}:= \sup_{{|z-\zeta|<1/2(1-|z|)}}  \psi(\zeta) &=\sup_{{|z-\zeta|<1/2(1-|z|)}}  \left(  \alpha\log\frac{1}{1-|\zeta|^2} +  w(\zeta)\right) \\
 &\leq   \alpha \log\frac{1}{1-|z|^2}  + \log\frac{3}{2}.
\end{align*} 

We obtain finally
\begin{equation}
   \sup_{z\in\U} |u_{N}(z)| e^{- \frac{\alpha}{2} \log\frac{1}{1-|z|^2}  } \leq  \frac{2}{3} \sup_{z\in\U} |u_{N}(z)|e^{-\frac{1}{2}\widetilde{\psi}(z) }\lesssim 1.
\end{equation}

Hence, by (\ref{F_N growth}) and (\ref{Derivativeufrho}), $f_{N}=F_N-u_N\in \mathcal{A}^{\infty}_{\alpha}$. 
This completes the proof.
\end{proof}
}

\end{document}